%
%
\documentclass{amsart} 

\newtheorem{theorem}{Theorem}[section]
\newtheorem{lemma}[theorem]{Lemma}
\newtheorem{corollary}[theorem]{Corollary}

\newtheorem{prop}[theorem]{Proposition}
\newtheorem{defn}[theorem]{Definition}
\newtheorem{example}[theorem]{Example}
\newtheorem{remark}[theorem]{Remark}

\newcommand{\Z}{\mathbb{Z}}
\newcommand{\R}{\mathbb{R}}
\newcommand{\C}{\mathbb{C}}

\newcommand{\cV}{\mathcal{V}}

\newcommand{\Gr}{\textrm{Gr}}

\newcommand{\Hom}{\mathrm{Hom}}

\newcommand{\End}{\mathrm{End}}
\newcommand{\Der}{\mathrm{Der}}

\newcommand{\Ker}{\mathrm{Ker}\,}
\renewcommand{\Im}{\mathrm{Im}\,}

\newcommand{\GL}{\mathrm{GL}}

\newcommand{\Span}{\mathrm{Span}}

\newcommand{\tr}{\mathrm{tr} \,}

\newcommand{\bP}{\mathbb{P}}

\newcommand{\cM}{\mathcal{M}}
\newcommand{\cA}{\mathcal{A}}

\newcommand{\cL}{\mathcal{L}}
\newcommand{\pt}{\mathrm{pt}}

\newcommand{\Id}{\mathrm{Id}}



\usepackage[T1]{fontenc} 
\usepackage[utf8]{inputenc} 
\usepackage{graphicx} 
\usepackage{hyperref} 
\usepackage{multirow} 
\usepackage{tabularx} 
\usepackage{color} 
\usepackage{textcomp} 
\usepackage{amsmath} 
\usepackage{amssymb} 
\usepackage{amsfonts} 
\usepackage{amsxtra} 
\usepackage{wasysym} 
\usepackage{isomath} 
\usepackage{mathtools} 
\usepackage{txfonts} 
\usepackage{upgreek} 
\usepackage{enumerate} 
\usepackage{tensor} 
\usepackage{pifont} 
\usepackage{soul} 
\usepackage{arydshln} 
\usepackage[english]{babel}
\usepackage{subcaption}
\usepackage{tikz-cd}

\definecolor{color-1}{rgb}{0.36,0.61,0.84}

\begin{document}

\title[NC geometry in computational models and uniformization for quiver]{Noncommutative Geometry of computational models and Uniformization for framed Quiver Varieties}

\author{George Jeffreys}
\address{Department of Mathematics and Statistics, Boston University, 111 Cummington Mall, Boston MA 02215, USA}
\email{georgej@bu.edu}

\author{Siu-Cheong Lau}
\address{Department of Mathematics and Statistics, Boston University, 111 Cummington Mall, Boston MA 02215, USA}
\email{lau@math.bu.edu}

\begin{abstract}
We formulate a mathematical setup for  computational neural networks using noncommutative algebras and near-rings, in motivation of quantum automata.  We study the moduli space of the corresponding framed quiver representations, and find moduli of Euclidean and non-compact types in light of uniformization.
\end{abstract}

\maketitle

\section{Introduction}

The connections between computer science and algebra are profound. In the early 1900's, both were deeply tied to practical and philosophical developments towards understanding what it truly means to calculate something. For example, there was Turing's Halting problem and G\"odel's Incompleteness theorem. 

As modern abstract algebra was developed in the 50's and 60's, it was fruitfully turned towards this path with the creation of the theory of finite automata. The first fundamental result in this development was Kleene's Theorem demonstrating that the class of recognizable languages is the class of rational languages \cite{Kleene+2016+3+42}. In 1956, Sch\"utzenberger defined the \textit{syntactic monoid}, a canonical monoid attached to each language \cite{Schutzenberger1955-1956}. Later, he proved that a language is star-free exactly when its syntactic monoid is finite and aperiodic \cite{SCHUTZENBERGER1965190}. At this point mathematicians started to consider the algebraic geometry of these monoids as Birkhoff \cite{birkhoff_1935} and latter Eilenberg \cite{eilenberg1976automata} and Reiterman \cite{Reiterman} developed wrote about varieties of these monoids (infinite and finite respectively). Thus we have a well established and important connection between theoretical computer science and pure algebraic geometry.

The theory of finite automata arose from an extremely widespread interdisciplinary effort to understand calculation. 
Modern science suggests that the brain operates as a so-called neural network, the structure of which has inspired the computational tool known as the artificial neural network.  Neural network models heavily use graphs and their linear representations.  This gives rise to further deep relations between mathematics and computer science.  

In this paper, we build an algebraic abstraction that models a neural network and quantum automata.  We are motivated as follows.  A finite automata consists of a set of states of a machine, a set of transitions between the states, and an alphabet set that will form a machine language, whose elements label the transitions of states. 

A quantum version of this replaces the set of states by a collection of vector spaces whose elements are known as state vectors.  The set of transitions is replaced by a set of linear maps between the vector spaces.  This forms a so-called quiver representation, which is a linear representation of the directed graph $Q$ (called a quiver) whose vertices label the collection of vector spaces, and whose arrows label the set of linear maps.

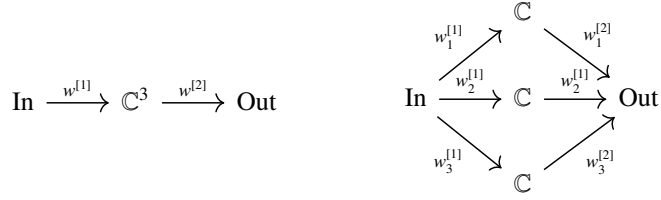
\begin{figure}[h!]
	\begin{subfigure}{0.4\linewidth}
		\centering
		\begin{tikzcd}
			\mathrm{In} \arrow[r, "w^{[1]}"] & \C^3 \arrow[r, "w^{[2]}"] & \mathrm{Out}
		\end{tikzcd}
	\end{subfigure}
	\begin{subfigure}{0.4\linewidth}
		\centering
		\begin{tikzcd}
			& \C \arrow[rd, "w^{[2]}_{1}"] & \\
			\mathrm{In} \arrow[ru, "w^{[1]}_{1}"] \arrow[r, "w^{[1]}_{2}"] \arrow[rd, "w^{[1]}_{3}"'] & \C \arrow[r, "w^{[1]}_{2}"] & \mathrm{Out}\\
			& \C \arrow[ru, "w^{[2]}_{3}"'] &
		\end{tikzcd}
	\end{subfigure}
	\caption{Two artificial neural networks with similar graphs.}
	\label{fig:two-nets}
\end{figure} 

Paths in the quiver play the role of words of a machine language.  The path algebra $$\cA = \C Q$$ 
consists of complex linear combinations of paths, with concatenation of paths serving as the product.  Taking linear combinations can be interpreted as forming superpositions of quantum states.

In summary, \emph{a quiver algebra and its modules provide a nice model of a quantum automata}.

One crucial component that one cannot miss is \emph{taking observation of the quantum particles}.  Most mathematical physics literature concentrate on the quantum propagation process, and have left away the mysterious observation step, perhaps due to its probabilistic and singular nature.  However, this step is crucial in true understanding of quantum physics, and also in practical applications.  For modeling quantum propagations, operator algebras serve as a very successful mathematical tool. However, to include the observation process, we find that a \emph{near-ring}, which is much less studied than an algebra, is necessary.

To model the observation process in a quantum world, we need two more ingredients: Hermitian metrics $h$ of the state spaces $V$, and a framing linear map $e: F \to V$ where $F=\C^{n}$ is called a framing vector space.  Then we take 
$$e^{*_h}(v) = \sum_{j=1}^{n} h(e(\epsilon_j),v) \epsilon_j^*,$$
where $\epsilon_j$ denotes the standard basis of $\C^n$ (and $\epsilon_j^*$ denotes the dual basis).  The coefficients $h(e(\epsilon_j),v)$ are interpreted as the quantum amplitudes of a state $v$ being $e(\epsilon_j)$.  Then the quantum collapsing after observation is modeled by composing this with a fixed non-linear activation function $\sigma:F \to F$ (for instance a certain step function, or a smoothing of it).  In the quantum world, $\sigma$ is indeed an $F$-valued probability distribution on $F$.

Thus, a quantum machine consists of not just linear transitions of states, but also the framings and non-linear activation functions that correspond to taking observations.  We will make the following definition.  See also Figure \ref{fig:computer}.

\begin{defn}[Definition \ref{def:cm}]
	An \emph{activation module} consists of:
	\begin{enumerate}
		\item a (noncommutative) algebra $\cA$ and vector spaces $V,\, F = F_{\mathrm{in}}\oplus F_{\mathrm{out}} \oplus F_{\mathrm{m}}$;
		(`m' stands for `memory' or `middle'.)
		\item A family of metrics $h_{\left(w,e\right)}$ on $V$ over the space of framed $\cA$-modules
		\begin{equation*}
		R = \mathrm{Hom}_{\mathrm{alg}}\left(\cA,\mathrm{End}\left(V\right)\right)\times \mathrm{Hom}\left(F,V\right)
		\end{equation*}
		which is $\mathrm{GL}\left(V\right)$-equivariant;
		\item a collection of possibly non-linear functions
		\begin{equation*}
		\sigma _{j}^{F}\colon F_{\mathrm{m}} \to F_{\mathrm{m}}.
		\end{equation*}		
	\end{enumerate}
\end{defn}

In above, $R$ parametrizes computing machines that have the same underlying framed quiver, and hence is governed by the same language.  Moreover, framed $\cA$-modules that differ by a $\GL(V)$-action have the same computational effect and hence should be identified.  $[R/\GL(V)]$ forms a moduli stack of computing machines.

In this formulation, a machine language is composed of not just linear transitions of state spaces, but also non-linear (or probabilistic) operations $\sigma$ that models quantum observations.  The set of operations generated by these is no longer an algebra, since 
$$\sigma \circ (\gamma_1 + \gamma_2) \not= \sigma \circ \gamma_1 + \sigma \circ \gamma_2$$
where $\gamma_1,\gamma_2$ are composed of linear operations in $\cA$ and the dual framing map $e^{*_h}$.  Rather, it generates a near-ring $\widetilde{A}$, which is almost a ring except that the multiplication (which is realized by composition of maps in the current setup) fails to be distributive on one side.

Motivated by this, we extend the theory of noncommutative differential forms by Connes \cite{Connes}, Cuntz-Quillen \cite{CQ}, Ginzburg \cite{Ginzburg-quiver} to the context of near-rings.  The main idea is that, every element in the near-ring $\widetilde{A}$, which is interpreted as a program written in the language of $\widetilde{A}$, produces a family of maps on the framing space $F$ over the moduli of machines $[R/G]$, that is, each machine in $[R/G]$ performs a computation $F \to F$ specified by the program.  This statement naturally extends to differential forms.

\begin{theorem}[Theorem \ref{thm:Atildeform}]
	There exists a degree-preserving map $$DR^\bullet(\widetilde{\cA}) \to  (\Omega^\bullet(R,\mathbf{Map}\left(F ,F\right)))^{G}$$
	which commutes with $d$ on the two sides.
	In above, $\mathbf{Map}\left(F ,F\right)$ denotes the trivial bundle $\mathrm{Map}\left(F ,F\right) \times R$, and the action of $G=\GL(V)$ on fiber direction is trivial.
\end{theorem}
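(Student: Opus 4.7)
The plan is to construct the map in three stages: first in degree zero by viewing a near-ring element as a composable program of maps, then in higher degrees via the Leibniz rule, and finally to verify compatibility with $d$. Throughout, the guiding idea is that an element of $\widetilde{\cA}$ is literally a program, and evaluating it at a machine $(w,e)\in R$ equipped with its metric $h_{(w,e)}$ produces a map $F\to F$.

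I would first define $\varphi_0 : \widetilde{\cA} \to \mathrm{Map}(R,\mathrm{Map}(F,F))^G$ in degree zero. Any element of $\widetilde{\cA}$ is built from three basic ingredients: algebra elements $\alpha\in \cA$ acting on $V$ through the representation $w$, the framing $e$ together with its metric-adjoint $e^{*_{h_{(w,e)}}}$, and the non-linear $\sigma^F_j$. At a point $(w,e)\in R$ this collection evaluates to a composition of maps with source and target $F$. Letting $(w,e)$ vary yields a function $R\to \mathrm{Map}(F,F)$; $G$-invariance follows because the $\GL(V)$-action cancels at every $V\to V$ junction (using the $G$-equivariance of $h_{(w,e)}$), while the endpoints sit in $F$ on which $G$ acts trivially.

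Next I would extend multiplicatively: on a generator $a_0\, da_1\cdots da_n \in DR^n(\widetilde{\cA})$ put
\begin{equation*}
\varphi(a_0\, da_1\cdots da_n) \;=\; \varphi_0(a_0)\cdot d\varphi_0(a_1)\wedge \cdots \wedge d\varphi_0(a_n),
\end{equation*}
where $d$ on the right is the ordinary exterior derivative on $R$ and the product is pointwise composition in $\mathrm{Map}(F,F)$. Compatibility with $d$ is then essentially automatic on generators: $\varphi(d(a_0\, da_1\cdots da_n))$ equals $d\varphi_0(a_0)\wedge d\varphi_0(a_1)\wedge\cdots$ by definition, which matches $d\varphi(a_0\, da_1\cdots da_n)$ after invoking $d^2=0$ on $R$, and the graded Leibniz rule propagates the identity to the whole complex.

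The main obstacle, as I see it, is the well-definedness of step two: the one-sided failure of distributivity in $\widetilde{\cA}$ forces the near-ring de Rham relations (constructed earlier in the paper by extending the Connes--Cuntz--Quillen--Ginzburg formalism) to be more delicate than in the associative-algebra case, and each such relation must be checked to become a pointwise identity of maps $F\to F$ over $R$ after applying $\varphi_0$. Particular care is needed at the non-linear generators: $\varphi_0(\sigma^F_j)$ is a constant function on $R$, so $d\varphi_0(\sigma^F_j)=0$, and this must be reconciled with the role that $d\sigma^F_j$ plays inside $DR^\bullet(\widetilde{\cA})$. Once this bookkeeping is absorbed into the near-ring relations, the theorem follows by tracking generators.
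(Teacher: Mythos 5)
There is a genuine gap, and it sits exactly at the point you defer as ``bookkeeping.'' Your step two presumes that $DR^\bullet(\widetilde{\cA})$ is multiplicatively generated by expressions $a_0\,da_1\cdots da_n$ with $a_i\in\widetilde{\cA}$, so that the map can be forced by $\varphi_0(a_0)\cdot d\varphi_0(a_1)\wedge\cdots\wedge d\varphi_0(a_n)$. But the complex $DR^\bullet(\widetilde{\cA})$ is not of that shape: its $p$-forms are built from form-valued trees whose internal nodes carry the formal higher symmetric differentials $\left. D^{(p)}\varsigma_l\right|_{\alpha}(\phi_1\cdot\eta_1,\ldots,\phi_p\cdot\eta_p)$, and the differential is defined through the chain rule $d(\varsigma_l\circ\alpha)=\left.D^{(1)}\varsigma_l\right|_{\alpha}(d\alpha)$. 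Your proposal never assigns an image to these node symbols, and they are the essential new ingredient beyond the associative-algebra case (where Proposition \ref{prop:lin-ind-form} already does the work for the linear part $\mathrm{Mat}_F(\hat{\cA})$).

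Concretely, the failure shows up already for the zero-form $a\circ\varsigma_l\circ\alpha$. Its evaluation at $(w,e)\in R$ is the composite $\varphi_0(a)_{(w,e)}\circ\sigma_l\circ\varphi_0(\alpha)_{(w,e)}:F\to F$, and the exterior derivative of this $\mathrm{Map}(F,F)$-valued function on $R$ is, by the chain rule,
$$d\varphi_0(a)\cdot\bigl(\sigma_l\circ\varphi_0(\alpha)\bigr)\;+\;\varphi_0(a)\cdot \left.D^{(1)}\sigma_l\right|_{\varphi_0(\alpha)}\bigl(d\varphi_0(\alpha)\bigr).$$
The second term involves the fiberwise derivative of the non-linear map $\sigma_l$ evaluated at the varying point $\varphi_0(\alpha)_{(w,e)}(x)$; it is not a wedge product of $d\varphi_0$'s of near-ring elements, so no ``multiplicative extension'' can reproduce it, and declaring $d\varphi_0(\varsigma_l)=0$ (which is what ``$\sigma_l$ is constant on $R$'' would give) makes the map fail to commute with $d$ at the very first non-linear composite. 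The correct construction, which is what the paper does, proceeds by induction on the generation of the form-valued tree: one sends each node symbol $\left.D^{(p)}\varsigma_l\right|_{\alpha}$ to the genuine $p$-th fiberwise symmetric differential of $\sigma_l$ at the induced map of $\alpha$, contracts it with the induced $\mathrm{Map}(F,F)$-valued forms on the incoming edges, and then verifies that the formal differential of Definition \ref{def:diff} matches the exterior derivative on $R$ precisely because both are instances of the chain rule. Your degree-zero construction and the $G$-invariance argument are fine; the missing idea is the assignment of images to the $D^{(p)}\varsigma_l$ symbols and the chain-rule verification, which cannot be shortcut by a Leibniz-rule extension from generators.
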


$0$-forms and $1$-forms are particularly important for machine learning.  Namely, for a fixed algorithm $\tilde{\gamma} \in \widetilde{\cA}$, a learning process attempts to find a machine $p \in [R/G]$ that produces the best fit computation $\phi^{\tilde{\gamma}}_p: F \to F$ by minimizing a certain $0$-form (for instance $\int _{K}\left| \varphi_p^{\tilde{\gamma}}(x)-f\left(x\right)\right|^{2} dx$ for a given $f:K\to\R$ and $K\subset F$ in supervised learning).  Its differential, which is a $1$-form in $DR^1(\widetilde{\cA})$, governs the gradient flow on $[R/G]$ with the help of a metric.

In general, $[R/G]$ is a singular stack.  Fortunately, for quiver algebras, a fine moduli of framed quiver representations was constructed by taking a GIT quotient (with respect to a suitably chosen stability condition) \cite{King,Nakajima-JAMS}.  Such moduli spaces $\cM$ can be used in place of $[R/G]$ and their topologies are well studied by \cite{Reineke}.

In \cite{JL}, we formulated learning of neural networks over the moduli spaces $\cM$.  Namely, the state space $V_i$ over each vertex $i\in Q_0$ patches up as a universal bundle $\cV_i$ over $\cM$.  The transition arrows $a\in Q_1$ correspond to bundle maps over $\cM$.  The framing linear maps $e_i: F_i \to V_i$ correspond to bundle maps from the trivial bundle $\mathbf{F_i}$ to $\cV_i$. Then data and states of the family of machines are naturally modeled by sections over $\cM$; propagation of signals is modeled by bundle maps.  In this formulation, learning is a stochastic gradient descent over the moduli $\cM$.

It is tempting to ask how this formulation relates to the most common method of machine learning over an Euclidean space, rather than a moduli space $\cM$.  In this paper, we will answer this question in light of uniformization of metrics.

The main observation is that, $\cM$ in effect is a compactification of the most commonly used Euclidean space, now denoted as $\cM^0$.  Moreover, the Euclidean space $\cM^0$ can be interpreted as a moduli space of positive-definite quiver representations with respect to a certain Hermitian form $H_i^0$ for the universal bundles $\cV_i$.  Thus, the most popular approach using Euclidean space indeed also falls into our formulation of learning in the moduli space of computing machines.

This uniformization picture naturally includes a hyperbolic version of the moduli space.  Namely, by changing the signature of the quadratic form (see \eqref{eq:H_i^-}), we obtain another type of moduli space $\cM^-$ of positive-definite quiver representations with respect to $H_i^-$.  We show that $\cM^-$ comes with a natural metric.

\begin{theorem}[Theorem \ref{corollary:Kahler}]
	Define $H_T^-$ to be $H_T^-:=-i\sum\limits_{i}\partial\overline{\partial}\log\det H_i^-$ on $\mathcal{M}^-$. Then $H_T^-$ is a K\"ahler metric on $\mathcal{M}^-$.
\end{theorem}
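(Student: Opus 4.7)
The plan is to verify the three properties required of a Kähler metric on $\mathcal{M}^-$: that $H_T^-$ is a real closed $(1,1)$-form, and that it is positive-definite. The first two are essentially formal, so the bulk of the argument is positivity.

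First, I would note that since $\mathcal{M}^-$ parametrizes representations that are positive-definite with respect to $H_i^-$, each $\det H_i^-$ is a positive real-valued smooth function on $\mathcal{M}^-$, so $\log\det H_i^-$ is well-defined and real. Consequently $-i\partial\overline{\partial}\log\det H_i^-$ is automatically a real closed $(1,1)$-form, and summing over vertices preserves these properties. Moreover, $\det H_i^-$ naturally arises as a Hermitian metric on the determinant line bundle $\det\mathcal{V}_i\to\mathcal{M}^-$, so each summand has a clean interpretation as (a multiple of) the Chern curvature of this line bundle.

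Next, for positivity I would work in local holomorphic slices of the $\mathrm{GL}(V)$-action, using the standard matrix calculus identity
\[
\partial\overline{\partial}\log\det H_i^- = \tr\bigl((H_i^-)^{-1}\,\partial\overline{\partial}H_i^-\bigr) - \tr\bigl((H_i^-)^{-1}(\partial H_i^-)(H_i^-)^{-1}(\overline{\partial}H_i^-)\bigr).
\]
From the definition of $H_i^-$ at \eqref{eq:H_i^-}, its variation along the framed-representation parameters has an explicit description in terms of the arrows incident to $i$. The second trace above is a trace of a Hermitian square with respect to the positive form $H_i^-$, so its contribution to $-i\partial\overline{\partial}\log\det H_i^-$ is nonnegative as a $(1,1)$-form, mirroring the classical proof that $-\partial\overline{\partial}\log\det(\mathrm{Im}\,Z)$ is positive on the Siegel upper half space precisely because $\mathrm{Im}\,Z$ is positive definite.

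Finally, I would argue that after summing over all vertices $i$, these nonnegative contributions collect into a form that is strictly positive in every holomorphic direction. The key input here is the framing/stability condition used in the GIT construction of $\mathcal{M}^-$: it guarantees enough independent arrows to detect any nonzero tangent vector, so that no holomorphic tangent direction at a point of $\mathcal{M}^-$ can lie simultaneously in the kernel of every $\partial H_i^-$. The main obstacle I anticipate is precisely this bookkeeping step: one must carefully match the positive contributions from the quadratic term against the first trace (which is a priori of indefinite sign after taking $-i\partial\overline{\partial}$) and show that the hyperbolic signature built into $H_i^-$ forces the sum to remain positive definite. I expect this to parallel the Bergman/Siegel uniformization philosophy already invoked in the introduction, with $H_i^-$ playing the role of the imaginary part of the period matrix.
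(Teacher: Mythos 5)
Your outline gets the formal properties right (realness, closedness, type $(1,1)$), and the expansion
$\partial\overline{\partial}\log\det H = \tr(H^{-1}\partial\overline{\partial}H) - \tr(H^{-1}(\partial H)H^{-1}(\overline{\partial}H))$
is indeed the starting point. But the proposal stops exactly where the proof has to begin: you yourself flag the matching of the indefinite first trace against the sign-definite second trace as ``the main obstacle,'' and you do not resolve it. Applied directly to $H_i^-$, the Hermitian-square term enters with the unfavorable sign, so nothing in your decomposition forces positivity; an appeal to the Siegel upper half space analogy is not a substitute for the computation. The missing idea, which the paper supplies, is to work with the \emph{inverse}: on $R_{n,d}^-$ one can write $(H_i^-)^{-1} = \hat{\rho}\hat{\rho}^*$ for a twisted row vector $\hat{\rho} = \rho B$ absorbing the signature matrix. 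Since $\partial\overline{\partial}(\hat{\rho}\hat{\rho}^*) = (\partial\hat{\rho})(\partial\hat{\rho})^*$ is itself a square, the two trace terms combine into
$|\partial_v\hat{\rho}|^2_{H_i^-} - |(\partial_v\hat{\rho})_1|^2_{H_i^-} = |(\partial_v\hat{\rho})_2|^2_{H_i^-}$,
where $(\partial_v\hat{\rho})_1$ and $(\partial_v\hat{\rho})_2$ are the restrictions of $\partial_v\hat{\rho}$ to $\Im\hat{\rho}^*$ and its orthogonal complement in $W_i$ (identified via the SVD factor $\hat{\rho}^*(\hat{\rho}\hat{\rho}^*)^{-1/2}$). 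This exhibits each summand of $H_T^-$ as a manifest Hermitian square, i.e.\ as the pullback of the positive curvature of the tautological bundle under the induced map to a Grassmannian, which is the content your ``bookkeeping step'' needs.

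Your nondegeneracy argument is also not quite the right criterion. The form does not vanish on ``the kernel of every $\partial H_i^-$'': a tangent vector $v$ is in the kernel of the form precisely when $(\partial_v\hat{\rho}^{(i)})_2 = 0$ for all $i$, i.e.\ when $\Im\,\partial_v(\hat{\rho}^{(i)})^* \subseteq \Im\,(\hat{\rho}^{(i)})^*$, which can happen with $\partial_v H_i^- \neq 0$. The paper's argument is that such a $v$ does not move any of the subspaces $(\hat{\rho}^{(i)})^*(V_i) \subset W_i$, and since the collection $((\hat{\rho}^{(i)})^*)_i$ embeds $\mathcal{M}^-$ into a product of Grassmannians, $v$ must be zero. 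A vague appeal to stability producing ``enough independent arrows'' does not substitute for this embedding statement.
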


In classical applications, one can also restrict to real coefficients.  Correspondingly, the formulae provided by this paper give bundle metrics for $\cV_i^\R|_{\cM_\R}$ and Riemannian metrics on $\cM_\R$.  

As a result, we can run machine learning over $\cM, \cM^0, \cM^-$, or an interpolation of them.  We can also set learnable parameters that interpolate these spaces, and let the machine learn which metric serves the best for a given task.

\subsection*{Some related works}
Recently, there is a rising interest in the connections between neural networks and quiver representations.  The paper \cite{Armenta-Jodoin} found an interesting way of encoding the data flow as a quiver representation, which makes a crucial use of the assumption of thin representations (where dimensions of representing vector spaces over vertices are all $1$).  On the other hand, the learning that they take is not directly carried over the quiver moduli, and hence is different from our approach in \cite{JL} and this paper.  \cite{GW} studied the symmetries coming from the quiver approach to neural networks.

There are also newly invented approaches to apply higher mathematics to machine learning.  Most literature concerns about the input data set and endows it with more interesting mathematical structures, for instance, Lie group symmetry \cite{CW, CGW, CWKW-spherical, CWKW, CAWHCW, dehaan2020natural}, or categorical structures \cite{Sheshmani-You}.  On the other hand, in our current approach, we focus on the computing machine itself, and formulate its algebro-geometric structure and makes use of its internal symmetry.

For learning using hyperbolic spaces, 
there are several beautiful works, see for instance \cite{Nickel2017PoincarEF}, \cite{Ganea2018HyperbolicEC}, \cite{Sala2018RepresentationTF}, \cite{Ganea2018HyperbolicNN}.  The non-compact dual of the moduli space $\cM^-$ that we introduce in this paper can be understood as a higher rank generalization of hyperbolic spaces in the sense of Hermitian symmetric spaces.  See more in Section \ref{sec:hyp}.

\subsection*{Organization of this paper} In Section 2, we will define computing machines in the context of noncommutative geometry. In Section 3, we will apply the idea of uniformization of metrics to construct non-compact duals to neural network quiver moduli spaces.

\section{An AG formulation of computing machine}

In this section, we give a mathematical formulation of a computing machine based on algebra and geometry.  First, we formulate a machine as a framed module over an algebra, together with a metric on the module and a collection of non-linear functions.  Second, we take into account of isomorphisms of framed modules and make sure the construction is equivariant under the automorphism group, and hence descends to the moduli stack of framed modules.  Finally, we extend the noncommutative geometry developed by \cite{Connes, CQ, Ginzburg-quiver} to the context of near-rings, and show how it fits into this framework.

\subsection{Intuitive construction}
Let $\cA$ be an associative algebra with unit $1_\cA$.  This algebra encodes all possible linear operations of the machine.  Later, in the context of neural network, we will take $\cA$ to be the path algebra of a directed graph (which is also called a quiver).

Let $V$ be a vector space.  $V$ is understood as the space of abstract states of the machine prior to any physical observation.  It is basis-free, namely, we do not pick any preferred choice of basis.  

We consider $\cA$-module structures, that is algebra homomorphisms $w\colon \cA\rightarrow \mathfrak{gl}\left(V\right)$.  Each module structure $w$ realizes $a\in A$ as a linear operation on the state space.

In reality, data are observed and recorded in fixed basis.  For this, we define a framing vector space $F=F_{\mathrm{in}}\oplus F_{\mathrm{out}}\oplus F_{\mathrm{m}}$.  Each component is a vector space with a fixed basis.  We may simply write $F = \C^n$ with the standard basis. Moreover, we consider linear maps $e\colon F\rightarrow V$, $e=e_{\mathrm{in}}\oplus e_{\mathrm{out}}\oplus e_{\mathrm{m}}$ which are called the framing maps.  $F_{\mathrm{in}}\oplus F_{\mathrm{out}}$ are vector spaces of all possible inputs and outputs.  $F_{\mathrm{m}}$ can be understood as a space for memory of the machine.  The framing maps $e$ are used to observe and record the abstract states.

A triple $\left(V,w,e\right)$ is called a framed $\cA$-module.  We denote by
\begin{equation*}
R\coloneqq \left\{\left(w,e\right)\colon w\colon \cA\rightarrow \mathfrak{gl}\left(V\right) \text{ alg. homo.};~ e\colon F\rightarrow V\right\}
\end{equation*}
the \emph{set of framed modules}.  It serves as the parameter space of the machine.  $R$ is a subvariety in $\mathrm{Lin}(\cA,\mathfrak{gl}(V)) \times \mathrm{Lin}(F,V)$.

Let $\cA_{\mathrm{m}}$ be the augmented algebra 
\begin{equation}
\cA_{\mathrm{m}} = \cA\langle 1_{\mathrm{m}}, \mathfrak{e}_{\mathrm{m}}, \mathfrak{e}^*_{\mathrm{m}} \rangle / I
\label{eq:A_m}
\end{equation}  
where $I$ is the two-sided ideal generated by the relations 
\begin{align*}
&1_{\mathrm{m}} \cdot \mathfrak{e}_{\mathrm{m}},\, \mathfrak{e}_{\mathrm{m}} \cdot 1_{\mathrm{m}} - \mathfrak{e}_{\mathrm{m}},\,1_\cA\cdot \mathfrak{e}_{\mathrm{m}} - \mathfrak{e}_{\mathrm{m}},\\ &\mathfrak{e}_{\mathrm{m}}^* \cdot 1_{\mathrm{m}},\, 1_{\mathrm{m}} \cdot \mathfrak{e}_{\mathrm{m}}^* - \mathfrak{e}_{\mathrm{m}}^*,\,
\mathfrak{e}_{\mathrm{m}}^* \cdot 1_\cA - \mathfrak{e}_{\mathrm{m}}^*,\\ &\mathfrak{e}_{\mathrm{m}}^2, \, (\mathfrak{e}^*_{\mathrm{m}})^2, \, a\cdot \mathfrak{e}^*_{\mathrm{m}}, \, \mathfrak{e}_{\mathrm{m}} \cdot a,\, a\cdot 1_{\mathrm{m}}, \, 1_{\mathrm{m}} \cdot a
\end{align*}
for all $a \in \cA$.  (This means, for instance, $1_{\mathrm{m}} \cdot \mathfrak{e}_{\mathrm{m}} = 0$ and $\mathfrak{e}_{\mathrm{m}} \cdot 1_{\mathrm{m}} = \mathfrak{e}_{\mathrm{m}}$ in the algebra $\cA_{\mathrm{m}}$.)  The unit of $\cA_{\mathrm{m}}$ is $1_\cA + 1_{\mathrm{m}}$.

Let's equip $V$ with a Hermitian metric $h$.  Then for each framing map $e=e_{\mathrm{in}}\oplus e_{\mathrm{out}}\oplus e_{\mathrm{m}}$, the element $\mathfrak{e}_{\mathrm{m}} \in \cA_{\mathrm{m}}$ is realized as the map $e_{\mathrm{m}}: F_{\mathrm{m}} \to V$, and $\mathfrak{e}_{\mathrm{m}}^*$ is realized as the metric adjoint $(h(e_{\mathrm{m},l}, \cdot))_{l=1}^{n_{\mathrm{m}}}: V \to F_{\mathrm{m}} = \C^{n_{\mathrm{m}}}$.  

To consider linear maps that have domain and target being $V$, 
we can form the subalgebra $$\cA_{{\mathrm{m}},0} := \cA\cdot \cA_{\mathrm{m}} \cdot \cA.$$  
An element $a\in \cA_{{\mathrm{m}},0}$ is understood as a linear algorithm.  Fixing $(w,e)\in R$, each linear algorithm $a \in \cA_{{\mathrm{m}},0}$ is associated with
$f^{a}\colon F_{\mathrm{in}}\rightarrow F_{\mathrm{out}}$,
\begin{equation*}
f^{a}\left(v\right)\coloneqq e_{\mathrm{out}}^{\mathrm{*}}\left(a\cdot e_{\mathrm{in}}\left(v\right)\right)
\end{equation*}
which is called a machine function.  ($e_{\mathrm{out}}^{*}:V \to F_{\mathrm{out}}$ is the metric adjoint $(h(e_{\mathrm{out},l}, \cdot))_{l=1}^{n_{\mathrm{out}}}$.)
In other words, we have the map
$$ R \times \cA_{{\mathrm{m}},0} \to \mathrm{Hom}(F_{\mathrm{in}},F_{\mathrm{out}}) $$
which is linear in the second component.

So far, this is just a linear model.  In order to capture non-linearity, we also need to incorporate with \emph{non-linear operations} $\sigma _{1},\ldots , \sigma_{N}$.  Let's define these as functions $V\rightarrow V$ for the moment.  (In the next subsection, we shall see that defining in this way is not good from the moduli point of view and it will be modified.)

Consider the $\C$-near-ring $\tilde{\cA}=\cA\left\{\varsigma _{1},\ldots ,\varsigma _{N}\right\}$.  The elements  $\varsigma _{j}$ are algebraic symbols for recording the non-linear operations $\sigma_j$.  See Definition \ref{def:near-ring} for the notion of a near-ring.  Essentially it is recording the compositions of module maps and the non-linear operations.  Similar to above, we take the augmented near-ring
\begin{equation}
\tilde{\cA}_{\mathrm{m}} = \tilde{\cA}\langle 1_{\mathrm{m}}, \mathfrak{e}_{\mathrm{m}}, \mathfrak{e}^*_{\mathrm{m}} \rangle / \tilde{I}
\label{eq:Atildem}
\end{equation}
where $\tilde{I}$ is generated by the relations in $I$ as in \eqref{eq:A_m}, together with the relations
$$\varsigma_l \cdot 1_{\mathrm{m}}, \,\,\, 1_{\mathrm{m}} \cdot \varsigma_l.$$ 
 (This means $\sigma_l$ and $1_{\mathrm{m}}$ compose to be zero.  We want this since $\sigma_l$ is acting on $V$ and $1_{\mathrm{m}}$ is acting on $F_{\mathrm{m}}$.)
An element $\tilde{\gamma}\in \tilde{\cA}_{\mathrm{m},0} := \cA\cdot \tilde{\cA}_{\mathrm{m}}\cdot \cA$ is understood as a non-linear algorithm.

Fixing $\left(w,e\right)\in R$, each algorithm $\tilde{\gamma} \in \tilde{\cA}_{\mathrm{m},0}$ is associated with a non-linear machine function $f_{\left(w,e\right)}^{\tilde{\gamma }}\colon F_{\mathrm{in}}\rightarrow F_{\mathrm{out}}$ ,
\begin{equation}
f_{\left(w,e\right)}^{\tilde{\gamma }}\left(v\right)=e_{\mathrm{out}}^{\mathrm{*}}\left(\tilde{\gamma }\circ _{\left(w,e\right)}e_{\mathrm{in}}\left(v\right)\right).
\label{eq:f^gammatilde}
\end{equation}
That is, we have the map
$$ R \times \tilde{\cA}_{\mathrm{m},0} \to \mathrm{Map}(F_{\mathrm{in}},F_{\mathrm{out}}).  $$

\subsection{Construction over moduli spaces}

An important principle in mathematics and physics is that isomorphic objects should produce the same result.
In other words, we want to have $f_{\left(w,e\right)}^{\tilde{\gamma }}$ well-defined over the moduli stack of framed $\cA$-modules $\mathcal{M} = [R/G]$ for $G=\GL(V)$.  Let's recall the following definition.

\begin{defn}
	For two framed $\cA$-modules $(V, w, e)$ and $(V',w',e')$, where both $e$ and $e'$ have the same domain $F$, a morphism (or an isomorphism) from $(V, w, e)$ to $(V',w',e')$ is a linear map (or a linear isomorphism) $g: V \to V'$ such that $w'(a) \circ g = g \circ w(a)$ for all $a \in \cA$ and $e' = g\circ e$.
\end{defn}

Unfortunately, this is not the case in the above formulation due to the presence of non-linear functions $\sigma: V \to V$.  Any useful non-linear function $\sigma: V \to V$ cannot satisfy $\mathrm{GL}\left(V\right)$-equivariance:
\begin{equation}
g\cdot \left(\sigma \left(v\right)\right)=\sigma \left(g\cdot v\right) \textrm{ for all } g\in \GL(V).
\label{eq:equiv-sigma-naive}
\end{equation}
It produces a crucial gap between the subject of machine learning and representation theory.

Here is a simple solution to this problem.  Let $\cV$ be the universal bundle over the moduli stack $\cM$, which is descended from the trivial bundle $V \times R$, where $G=\GL(V)$ acts diagonally.  

Rather than defining $\sigma$ as a single linear map $V\rightarrow V$, let's take $\sigma$ to be a fiber-bundle map $V \times R \to V \times R$ over $R$.  Then $\sigma$ descends as a fiber-bundle map $\cV \to \cV$ over $\cM$ if it satisfies 
the equivariance equation
\begin{equation}
g\cdot \left(\sigma _{\left(w,e\right)}\left(v\right)\right)=\sigma _{\left(g\cdot w,g\cdot e\right)}\left(g\cdot v\right) \textrm{ for all } g\in \GL(V).
\label{eq:equiv-sigma}
\end{equation}
The difference between Equation \eqref{eq:equiv-sigma} and \eqref{eq:equiv-sigma-naive} is that $\sigma$ is now allowed to also depend on $(w,e)\in R$.   

Now suppose we have $\GL(V)$-equivariant fiber-bundle maps $\sigma_1,\ldots,\sigma_N: V\times R \to V \times R$.  As in the last subsection, we have the map $ R \times \tilde{\cA}_{\mathrm{m},0} \to \mathrm{Map}(F_{\mathrm{in}},F_{\mathrm{out}})$ by realizing $\varsigma_i \in \tilde{\cA}$ as $(\sigma_i)_{(w,e)}:\cV\to \cV$.

Recall that we have used a Hermitian metric on $V$ for taking the adjoint of framing $e^*$.  To make sure $e^*$ is also equivariant, we need to equip $V$ with a family of Hermitian metrics $h_{\left(w,e\right)}$ for $\left(w,e\right)\in R$, in a $\GL(V)$-equivariant way:
\begin{equation}
	h_{\left(g\cdot w,g\cdot e\right)}\left(g\cdot u,g\cdot v\right)=h_{\left(w,e\right)}\left(u,v\right)  \textrm{ for all } g\in \GL(V).
	\label{eq:h-equiv}
\end{equation}
That is, $h$ descends to be a Hermitian metric on the universal bundle $\cV$ over $\cM$.
Note that we are NOT asking for $\GL(V)$-invariance
$h\left(g\cdot u,g\cdot v\right)=h\left(u,v\right)$ for a single metric $h$, which is impossible.  

\begin{prop} \label{prop:equiv}
	In the above setting, the non-linear machine function defined by Equation \eqref{eq:f^gammatilde} satisfies the equivariance $f_{\left(w,e\right)}^{\tilde{\gamma }}=f_{g\cdot\left(w,e\right)}^{\tilde{\gamma }}$  for all $g\in \GL(V)$.  
\end{prop}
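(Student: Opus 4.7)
The plan is to reduce the equivariance of $f^{\tilde{\gamma}}_{(w,e)}$ to an atom-by-atom equivariance of the realizations of the generators of $\tilde{\cA}_{\mathrm{m},0}$: once each generator is shown to be realized at $g\cdot(w,e)$ as the $g$-conjugate (in the appropriate sense) of its realization at $(w,e)$, the composition defining $f^{\tilde{\gamma}}$ telescopes, all interior $g^{-1}\circ g$ pairs cancel, and the two outer factors of $g^{\pm 1}$ coming from the framings cancel against each other.

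First I would establish atomic equivariance for each generator. For $a\in\cA$, the $\GL(V)$-action is by definition $w(a)\mapsto g\,w(a)\,g^{-1}$. For the framing component $e_{\mathrm{m}}$, and analogously for $e_{\mathrm{in}},e_{\mathrm{out}}$, we have $(g\cdot e)_{\mathrm{m}}=g\circ e_{\mathrm{m}}$. The crucial computation is for the adjoint, and it uses the metric equivariance \eqref{eq:h-equiv}: for any $v\in V$,
\begin{equation*}
\bigl((g\cdot e)_{\mathrm{m}}\bigr)^{*_{h_{g\cdot(w,e)}}}\!(v)=\sum_{l}h_{g\cdot(w,e)}\!\bigl(g\cdot e_{\mathrm{m}}(\epsilon_l),\,v\bigr)\,\epsilon_l^{*}=\sum_{l}h_{(w,e)}\!\bigl(e_{\mathrm{m}}(\epsilon_l),\,g^{-1}v\bigr)\,\epsilon_l^{*}=e_{\mathrm{m}}^{*_{h_{(w,e)}}}(g^{-1}v),
\end{equation*}
so $\bigl((g\cdot e)_{\mathrm{m}}\bigr)^{*_{h_{g\cdot(w,e)}}}=e_{\mathrm{m}}^{*_{h_{(w,e)}}}\circ g^{-1}$, and analogously for the other adjoints. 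Finally, \eqref{eq:equiv-sigma} is precisely $(\sigma_j)_{g\cdot(w,e)}=g\circ(\sigma_j)_{(w,e)}\circ g^{-1}$.

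Next I would induct on the construction of $\tilde{\gamma}$ as a word in $\tilde{\cA}_{\mathrm{m},0}=\cA\cdot\tilde{\cA}_{\mathrm{m}}\cdot\cA$ built from the above atoms by composition and scalar sums. Composition preserves the $g$-conjugation identity because adjacent $g^{-1}\circ g$ pairs cancel; scalar sums preserve it because pre- and post-composition with $g,g^{-1}$ are linear. The sandwiching by $\cA$ on both sides ensures that the realization of $\tilde{\gamma}$ is an endomorphism of $V$, so both sides of the $g$-conjugation live in $\End(V)$. Substituting into \eqref{eq:f^gammatilde},
\begin{equation*}
f^{\tilde{\gamma}}_{g\cdot(w,e)}(v)=(g\cdot e)_{\mathrm{out}}^{*_{h_{g\cdot(w,e)}}}\!\!\circ\tilde{\gamma}\circ_{g\cdot(w,e)}\!(g\cdot e)_{\mathrm{in}}(v)=e_{\mathrm{out}}^{*_h}\!\circ g^{-1}\circ g\circ\tilde{\gamma}\circ_{(w,e)}\!g^{-1}\circ g\circ e_{\mathrm{in}}(v)=f^{\tilde{\gamma}}_{(w,e)}(v).
\end{equation*}

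The only potentially delicate issue is the interaction with the near-ring structure of $\tilde{\cA}_{\mathrm{m}}$: because distributivity fails on one side when $\sigma_j$'s are present, one might worry about commuting $g,g^{-1}$ past the $\sigma_j$'s. This is harmless here, since the atomic equivariance for $\sigma_j$ is pointwise in $V$; the telescoping argument never distributes a sum past a $\sigma_j$, but only composes maps. This is exactly why the passage from the rigid $\sigma:V\to V$ of \eqref{eq:equiv-sigma-naive} to the $R$-dependent family $\sigma_{(w,e)}$ satisfying \eqref{eq:equiv-sigma} is the correct remedy.
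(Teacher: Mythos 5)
Your proof is correct and follows essentially the same route as the paper's: show that each constituent map ($w_a$, the framings, their metric adjoints via \eqref{eq:h-equiv}, and the $\sigma_j$ via \eqref{eq:equiv-sigma}) transforms by the appropriate $g$-conjugation, and then observe that the composition telescopes. Your extra remarks on the near-ring subtlety and the induction on word structure are sound elaborations of the same argument.
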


\begin{proof}
	The fiber-bundle map $f_{\left(w,e\right)}^{\tilde{\gamma }}: V \times R \to V \times R$ defined by \eqref{eq:f^gammatilde} is a composition of $e_{\mathrm{out}}^{\mathrm{*}}=(h_{(w,e)}(e_{\mathrm{out},l}, \cdot))_{l=1}^{n_{\mathrm{out}}}$, $w_a$ for $a \in \cA$, the fiber-bundle maps $(\sigma_i)_{(w,e)}:V\times R \to V \times R$, and $e_{\mathrm{in}}$.  Under the action of $g \in \GL(V)$, They change to $$e_{\mathrm{out}}^{\mathrm{*}}=(h_{g\cdot (w,e)}(g\cdot e_{\mathrm{out},l}, \cdot))_{l=1}^{n_{\mathrm{out}}}=(h_{(w,e)}(e_{\mathrm{out},l}, g^{-1}(\cdot)))_{l=1}^{n_{\mathrm{out}}}=e_{\mathrm{out}}^{\mathrm{*}}\cdot g^{-1},$$
	$g\cdot w_a \cdot g^{-1}$, $$\sigma _{\left(g\cdot w,g\cdot e\right)} = g \cdot \sigma_{(w,e)}(g^{-1}(\cdot))$$ and $g \cdot e_{\mathrm{in}}$ respectively, using Equation \eqref{eq:equiv-sigma} and \eqref{eq:h-equiv}.  The composition remains the same.
\end{proof}

In this way, we obtain the map
$\cM \times \tilde{\cA}_{\mathrm{m},0} \to \mathrm{Map}(F_{\mathrm{in}},F_{\mathrm{out}})$.

In applications, we need concrete fiber bundle maps $\sigma: \cV \to \cV$.  They can be cooked up using the Hermitian metric $h$ on $\cV$ as follows.  Given any function $\sigma ^{F}\colon F_{\textrm{m}}\rightarrow F_{\textrm{m}}$, define $\sigma _{\left(w,e\right)}$ as
\begin{equation*}
\sigma _{\left(w,e\right)}\left(v\right)\coloneqq e^{(\textrm{m})}\cdot \sigma ^{F}\left(h_{\left(w,e\right)}\left(e^{(\textrm{m})}_{1},v\right),\ldots ,h_{\left(w,e\right)}\left(e^{(\textrm{m})}_{n_{\textrm{m}}},v\right)\right).
\end{equation*}
In other words, we observe and record the state $v$ to memory using $e^{(\textrm{m})}$ and $h$; then we perform the non-linear operation $\sigma^F$ on the memory $F_{\textrm{m}}$; finally we send it back as a state in $V$.  Unlike the setting in the last subsection, the non-linear operation $\sigma^F$ is now defined on the framing space $F_{\textrm{m}}$ instead of on the basis-free state space $V$.

\begin{prop} \label{prop:equiv-sigma}
	The above $\sigma _{\left(w,e\right)}\colon V\times R\rightarrow V\times R$ is $\mathrm{GL}\left(V\right)$-equivariant.
\end{prop}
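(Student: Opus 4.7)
The plan is a direct unpacking of the two sides of the equivariance equation \eqref{eq:equiv-sigma}, using only the definition of $\sigma_{(w,e)}$ together with the assumed $\GL(V)$-equivariance \eqref{eq:h-equiv} of the metric family $h_{(w,e)}$. There is essentially no obstacle; the proposition is a compatibility statement built into the construction.

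First, I would write out the right-hand side $\sigma_{(g\cdot w,\, g\cdot e)}(g\cdot v)$ by substituting $(w,e,v)\mapsto (g\cdot w,\, g\cdot e,\, g\cdot v)$ into the defining formula. The framing factor in front becomes $(g\cdot e)^{(\mathrm{m})} = g\circ e^{(\mathrm{m})}$, and the $j$-th argument of $\sigma^F$ becomes $h_{(g\cdot w,\,g\cdot e)}\bigl(g\cdot e^{(\mathrm{m})}_j,\; g\cdot v\bigr)$. Applying the equivariance hypothesis \eqref{eq:h-equiv} termwise, each such entry simplifies to $h_{(w,e)}\bigl(e^{(\mathrm{m})}_j,\, v\bigr)$.

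Therefore the tuple of complex numbers fed into $\sigma^F$ is identical to the one appearing in the definition of $\sigma_{(w,e)}(v)$, so the value of $\sigma^F$ is unchanged under the $g$-action. What remains is the outer framing map, which has acquired a prefactor of $g$, yielding $g\cdot \bigl(e^{(\mathrm{m})}\cdot \sigma^F(\ldots)\bigr)=g\cdot \sigma_{(w,e)}(v)$. This is exactly the left-hand side of \eqref{eq:equiv-sigma}, finishing the verification.

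The only subtle point worth highlighting in the write-up is that the metric equivariance \eqref{eq:h-equiv} is applied twice per coordinate: once on the framing vector $e^{(\mathrm{m})}_j$ and once on the state $v$, both transforming simultaneously under $g$; this is precisely why one must allow $h$ to depend on $(w,e)$ rather than impose the impossible invariance condition. No further calculation is required.
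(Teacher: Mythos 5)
Your proposal is correct and coincides with the paper's own proof: both substitute the $g$-acted data into the defining formula for $\sigma_{(w,e)}$, apply the equivariance \eqref{eq:h-equiv} of the metric family to see that the arguments of $\sigma^F$ are unchanged, and conclude that only the outer factor $g\cdot e^{(\mathrm{m})}$ remains, giving $g\cdot\sigma_{(w,e)}(v)$. No differences worth noting.
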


\begin{proof}
	\begin{align*}
		\sigma _{\left(g\cdot w,g\cdot e\right)}\left(g\cdot v\right) &=  g\cdot e^{(\textrm{m})}\cdot \sigma ^{F}\left(h_{\left(g\cdot w,g\cdot e\right)}\left(g\cdot e^{(\textrm{m})}_{1}, g\cdot v\right),\ldots ,h_{\left(g\cdot w,g\cdot e\right)}\left(g\cdot e^{(\textrm{m})}_{n_{\textrm{m}}},g\cdot v\right)\right) \\
		&= g \cdot e^{(\textrm{m})}\cdot \sigma ^{F}\left(h_{\left(w,e\right)}\left(e^{(\textrm{m})}_{1},v\right),\ldots ,h_{\left(w,e\right)}\left(e^{(\textrm{m})}_{n_{\textrm{m}}},v\right)\right) = g\cdot \sigma_{(w,e)}(v)
	\end{align*}
	using Equation \eqref{eq:h-equiv}.
\end{proof}

The non-linear operations are called activation functions in machine learning.  We conclude the current setting by the following definition.  

\begin{defn} \label{def:cm}
	An \emph{activation module} consists of:
	\begin{enumerate}
		\item a (noncommutative) algebra $\cA$ and vector spaces $V,\, F = F_{\mathrm{in}}\oplus F_{\mathrm{out}} \oplus F_{\mathrm{m}}$;
		\item A family of metrics $h_{\left(w,e\right)}$ on $V$ over the space of framed $\cA$-modules
		\begin{equation*}
		R = \mathrm{Hom}_{\mathrm{alg}}\left(\cA,\mathrm{End}\left(V\right)\right)\times \mathrm{Hom}\left(F,V\right)
		\end{equation*}
		which is $\mathrm{GL}\left(V\right)$-equivariant;
		\item a collection of possibly non-linear functions
		\begin{equation*}
		\sigma _{j}^{F}\colon F_{\mathrm{m}} \to F_{\mathrm{m}}.
		\end{equation*}		
	\end{enumerate}
	The data of (1) and (2) (without (3)) is called a Hermitian family of framed modules.
\end{defn}

Figure \ref{fig:computer} shows a schematic picture of an activation module.

\begin{figure}[h]
	\centering
	\includegraphics[scale=0.4]{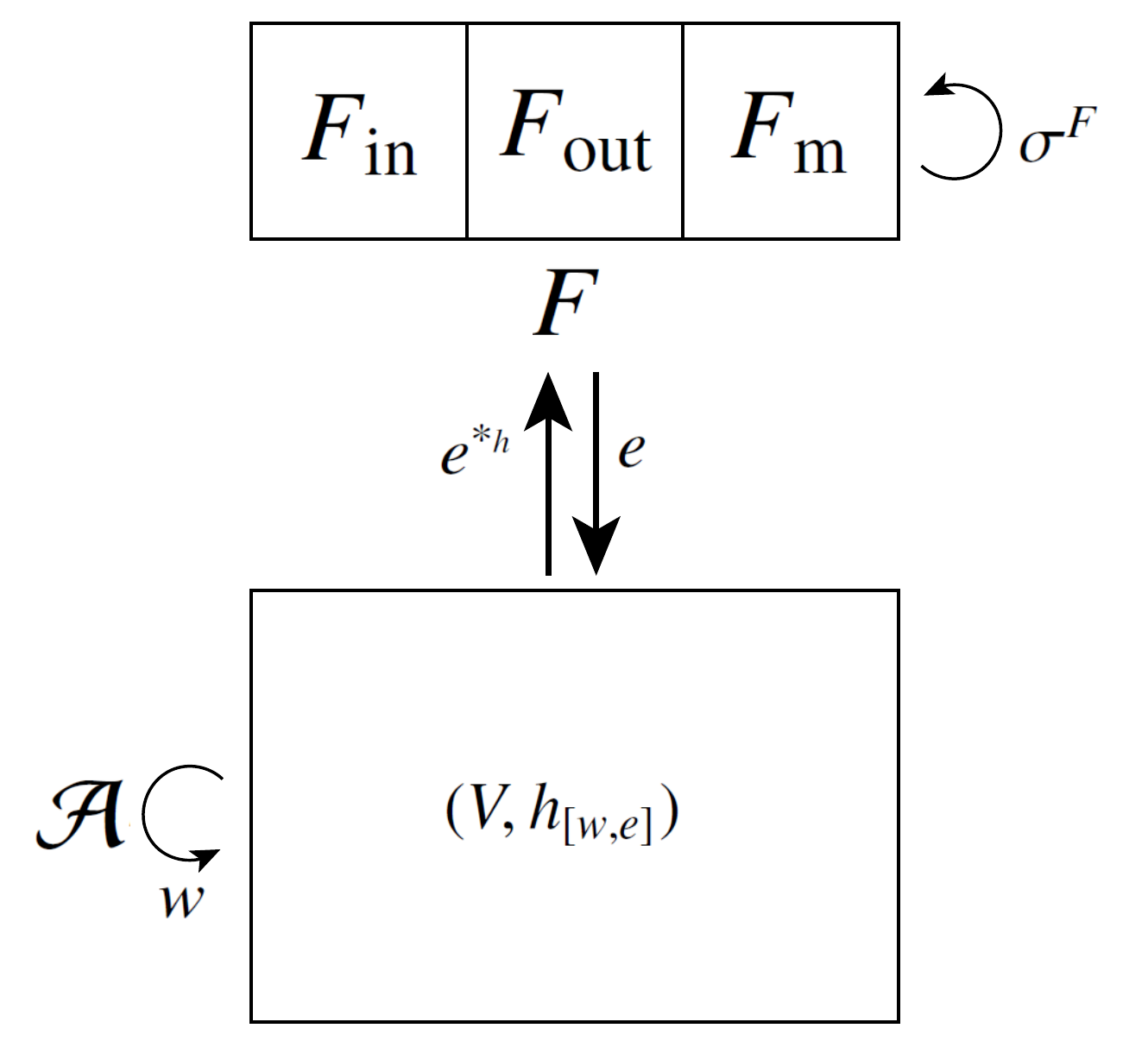}
	\caption{}
	\label{fig:computer}
\end{figure}

In this setting, $\sigma_j^F$ is a function on $F_{\mathrm{m}}$.  We take the subalgebra
\begin{equation}
\cL(\cA_{\mathrm{m}}) := \mathfrak{e}_{\mathrm{m}}^* \cdot  \cA_{\mathrm{m}} \cdot \mathfrak{e}_{\mathrm{m}}
\label{eq:LAm}
\end{equation}
consisting of loops at $F_{\mathrm{m}}$, the near-ring
\begin{equation}
\widetilde{\cA_{\mathrm{m}}}:=(\cL(\cA_{\mathrm{m}}))\{\varsigma_1,\ldots,\varsigma_N\},
\label{eq:Amtilde}
\end{equation}
and
$$ (\widetilde{\cA_{\mathrm{m}}})_0 := \cA \cdot \mathfrak{e}_{\mathrm{m}} \cdot \widetilde{\cA_{\mathrm{m}}} \cdot \mathfrak{e}_{\mathrm{m}}^* \cdot \cA. $$
Note that $\widetilde{\cA_{\mathrm{m}}}$ is different from $\tilde{\cA}_{\mathrm{m}}$ in Equation \eqref{eq:Atildem}, since we now have non-linear functions defined on $F$ instead of $V$.  

Using Proposition \ref{prop:equiv} and \ref{prop:equiv-sigma}, each algorithm $\tilde{\gamma} \in (\widetilde{\cA_{\mathrm{m}}})_0$ and $[w,e] \in \cM$ gives a machine function $f^{\tilde{\gamma}}_{[w,e]}$.  This gives a map
$$(\widetilde{\cA_{\mathrm{m}}})_0 \to \Gamma \left(\cM,\mathbf{Map}\left(F_{\mathrm{in}},F_{\mathrm{out}}\right)\right).$$

In applications, an activation module may consist of several linear submodules, which are connected by possibly-nonlinear transitions $\sigma^F_i$.  This means the algebra $\cA$ is a direct sum $\bigoplus_{k\in K} \cA^{(k)}$ where each $\cA^{(k)}$ is understood as a linear component of the activation module (and $K$ is an index set).  Similarly we have $V = \bigoplus_{k\in K} V^{(k)}$ and $F = \bigoplus_{k\in K} F^{(k)}$.  We take the moduli stack $\prod_{k\in K} [R^{(k)}/\GL(V^{(k)})]$ (where $R^{(k)}=\mathrm{Hom}_{\mathrm{alg}}\left(A^{\left(k\right)},\mathrm{End}\left(V^{\left(k\right)}\right)\right)\times \mathrm{Hom}\left(F^{\left(k\right)},V^{\left(k\right)}\right)$) instead of $[R/\GL(V)]$.  Each $F^{(k)}$ has three components $F^{(k)} = F^{(k)}_{\mathrm{in}}\oplus F^{(k)}_{\mathrm{out}} \oplus F^{(k)}_{\mathrm{m}}$ (where some of the components can simply be $\{0\}$).  
Furthermore, the non-linear functions $\sigma^F_j:F_{\mathrm{m}} \to F_{\mathrm{m}}$ is a composition $\iota\circ s_j^{F} \circ \pi$, where $s_j^{F}: F_{\mathrm{m}}^{\left(p_{j,1}\right)}\times \ldots \times F_{\mathrm{m}}^{\left(p_{j,{m_{j}}}\right)}\rightarrow F_\mathrm{m}^{\left(q_{j,1}\right)}\times \ldots \times F_\mathrm{m}^{\left(q_{j,{n_{j}}}\right)}$ for some fixed $\{p_{j,1},\ldots,p_{j,{m_{j}}}\}$ and $\{q_{j,1},\ldots,q_{j,{n_{j}}}\}$; $\pi$ is the projection $F_\mathrm{m} \to F_{\mathrm{m}}^{\left(p_{j,1}\right)}\oplus \ldots \oplus F_{\mathrm{m}}^{\left(p_{j,{m_{j}}}\right)}$ and $\iota$ is the inclusion (or extension by zero) $F_\mathrm{m}^{\left(q_{j,1}\right)}\oplus  \ldots \oplus  F_\mathrm{m}^{\left(q_{j,{n_{j}}}\right)} \to F_\mathrm{m}$.  Finally, $h$ is a direct sum $h_{\left(w,e\right)}=\bigoplus_{k\in K} h_{\left(w^{\left(k\right)},e^{\left(k\right)}\right)}$ where each $h_{\left(w^{\left(k\right)},e^{\left(k\right)}\right)}$ is a family of metrics $h_{\left(w^{\left(k\right)},e^{\left(k\right)}\right)}$ on $V^{\left(k\right)}$ over the space of framed $A^{\left(k\right)}$-modules
$
R^{(k)}
$
which is $\mathrm{GL}\left(V^{\left(k\right)}\right)$-equivariant.

We can also define a closely related setting that uses unitary framed modules, which takes the unitary group $U(V,h)$ in place of $\GL(V)$, and takes a single Hermitian metric $h$ in place of a family of Hermitian metrics.

\begin{defn}
	A \emph{unitary activation  module} consists of:
	\begin{enumerate}
		\item A Hermitian vector space $(V,h)$, a framing vector space $F = F_{\mathrm{in}}\oplus F_{\mathrm{out}} \oplus F_{\mathrm{m}} = \C^n$ (equipped with the standard metric), and unitary framing maps $e_\bullet: F_\bullet \to V$, where $\bullet = \mathrm{in},\mathrm{out},\mathrm{m}$.
		\item A group ring $\cA = \C[G]$ where $G$ is a subgroup of the unitary group $U(V,h)$.  $\C[G]$ consists of linear combinations $\sum_{g\in G} c_g g$ for $c_g \in \C$.  
		\item a collection of possibly non-linear functions
		\begin{equation*}
		\sigma _{j}^{F}\colon F_{\mathrm{m}} \to F_{\mathrm{m}}.
		\end{equation*}	
		\end{enumerate}
\end{defn}

Such a setting suits well for quantum computing.  Namely, $(V,h)$ can be taken to be the state space of a quantum system of particles.  $G$ is a subgroup of unitary operators on $(V,h)$.  $F_{\textrm{m}}$ can be taken to have the same dimension as $V$, and $e_{\textrm{m}}: F_{\textrm{m}} \to V$ maps the standard basis of $F_{\textrm{m}}$ to an assigned unitary basis of $V$.  (For instance, the assigned basis can be $\{|00\rangle, |01\rangle, |10\rangle, |11\rangle\}$ for a 2-qubit system).  There is a probabilistic projection $\sigma_0:F_{\textrm{m}} \to F_{\textrm{m}}$ that corresponds to wave-function collapse following each observation.  We also have other non-linear classical operations $\sigma_j^F$ on $F_{\textrm{m}}$.
	
In application, we are given  input data $v \in F_{\textrm{in}}$.  $v$ (normalized to have length $1$) is sent to the Hermitian state space $V$ by $e_{\textrm{in}}$, and operated under a prescribed linear algorithm $a \in \C[G]$.   Then the system is observed and recorded using the basis $e_{\textrm{m}}$.  This gives $\sigma_0 \cdot \sum_l h(e_{\textrm{m},l},a\cdot e_{\textrm{in}} \cdot v) e_{\textrm{m},l}$.  The recorded memory can be operated by a non-linear algorithm consisting of $\sigma_j^F$.  The process can be iterated and give a function $F_{\textrm{in}} \to F_{\textrm{out}}$.

In this paper, we focus on Definition \ref{def:cm}, for the purpose of neural network and deep learning which works with $\GL(n)$ rather than $U(n)$.

\subsection{Noncommutative geometry and machine learning}
\label{sec:ncml}

We have formulated a computing machine by a Hermitian family of framed $\cA$-modules and a collection of non-linear functions.  If we ignore the non-linear functions for the moment, and merely consider the augmented algebra $\cA_{\mathrm{m}}$, it fits well to the framework of noncommutative geometry developed by Connes \cite{Connes}, Cuntz-Quillen \cite{CQ}, Ginzburg \cite{Ginzburg-quiver}.  Below we give a quick review and apply to our situation.  \cite{Tacchella} gives a beautiful survey on this theory.  We will extend it to near-ring in the next subsection.

\subsubsection{A quick review}
The theory develops an analog of the de Rham complex of differential forms for an associative algebra $A$ over a field $\mathbb{K}$ (that we take to be $\C$ in this paper).  This is a crucial step to develop the notions of cohomology, connection and curvature for the noncommutative space associated to $A$ and its associated vector bundles.

The noncommutative differential forms can be described as follows.  Consider the quotient vector space $\overline{A} = A/\mathbb{K}$ (which is no longer an algebra).  We think of elements in $\overline{A}$ as differentials.  Define
$$ D(A) := \bigoplus_{n\in \Z_{\geq 0}} D(A)_n, \,\, D(A)_n := A\otimes \overline{A} \otimes \ldots \otimes \overline{A} $$
where $n$ copies of $\overline{A}$ appear in $D(A)_n$, and the tensor product is over the ground field $\mathbb{K}$.  We should think of elements in $\overline{A}$ as \emph{matrix-valued} differential one-forms.  Note that $X\wedge X$ may not be zero, and $X \wedge Y \not= -Y \wedge X$ in general for matrix-valued differential forms $X,Y$.

The differential $d_n: D(A)_n \to D(A)_{n+1}$ is defined as
$$
d_n(a_0\otimes \overline{a_1} \otimes \ldots \otimes \overline{a_n}) := 1\otimes \overline{a_0} \otimes \ldots \otimes \overline{a_n}.  $$
The product $D(A)_n \otimes D(A)_{m-1-n} \to D(A)_{m-1}$ is more tricky:

\begin{align}
&(a_0\otimes \overline{a_1} \otimes \ldots \otimes \overline{a_n})\cdot (a_{n+1}\otimes \overline{a_{n+2}} \otimes \ldots \otimes \overline{a_m}) \nonumber \\
:=& (-1)^n a_0a_1 \otimes \overline{a_2} \otimes \ldots \otimes \overline{a_m} + \sum_{i=1}^n (-1)^{n-i} a_0\otimes \overline{a_1} \otimes \ldots \otimes \overline{a_ia_{i+1}} \otimes \ldots \otimes \overline{a_m}
\label{eq:prod}
\end{align}

which can be understood by applying the Leibniz rule on the terms $\overline{a_ia_{i+1}}$.
Note that we have chosen representatives $a_i \in A$ for $i=1,\ldots,n+1$ on the RHS, but the sum is independent of choice of representatives (while the product $\overline{a_ia_{i+1}}$ itself depends on representatives).  

The above product in particular gives a bimodule structure on $D(A)$ over $A = D(A)_0$.  For instance, $D(A)_1$ has the bimodule structure
$$ a\cdot (a_0 \otimes \overline{a_1}) = aa_0 \otimes \overline{a_1},\,\, (a_0 \otimes \overline{a_1})\cdot a = -a_0a_1\otimes \overline{a} + a_0\otimes \overline{a_1 a}. $$
(If $a_1$ is replaced by $a_1+k$ for $k\in \mathbb{K}$, then RHS $= -a_0a_1\otimes \overline{a} - k a_0\otimes \overline{a}  + a_0\otimes \overline{a_1 a} + k a_0 \otimes \overline{a} = -a_0a_1\otimes \overline{a} + a_0\otimes \overline{a_1 a}$ remains unchanged.)

By \cite{CQ}, 
$$ d^2=0. $$
The above differential $d$ and product defines a dg-algebra structure on $D(A)$; indeed this is the unique one that satisfies $a_0 \cdot da_1 \cdot \ldots \cdot da_n = a_0 \otimes \overline{a_1}\otimes\ldots\otimes \overline{a_n}$. Moreover, $(D(A),i)$, where $i: A \to D(A)_0=A$ is the identity map, has the following universal property: for every $(\Gamma,\psi)$ where $\Gamma$ is a dg algebra and $\psi: A \to \Gamma_0$ is an algebra homomorphism, there exists an extension as a dg-algebra map $u_\psi: D(A) \to \Gamma$ such that the degree-zero part satisfies $(u_\psi)_0 \circ i = \psi$.

Here is another realization of differential forms for $A$.  First, define the $A$-bimodule $\Omega^1(A) := \Ker(\mu)$ where $\mu: A\otimes A \to A$ is the multiplication map for $A$.  Moreover, define $d: A \to \Omega^1(A)$ by $da := 1\otimes a - a\otimes 1$.  Thus $\sum_i a_i da_i'$ for $a_i,a_i' \in A$ is an element in $\Omega^1(A)$.  Conversely, any element in $\Omega^1(A)$ is of the form $\sum_i a_i \otimes a_i'$ with $\sum_i a_i \cdot a_i' = 0$, and this equals to
$$ \sum_i a_i d a_i'  = -\sum_i (da_i) a_i'.$$

Then we take the tensor algebra
$$\Omega^\bullet(A) := T_A(\Omega^1(A)) = \bigoplus_{i \in \Z_{\geq 0}} \Omega^1(A) \otimes_A \ldots \otimes_A \Omega^1(A)$$
where there are $i$ copies of $\Omega^1(A)$ for the summands on the right.  An element in $\Omega^\bullet(A)$ takes the form
$ a_1 db_1 \otimes_A a_2 db_2 \otimes_A \ldots \otimes_A a_k db_k \cdot a_{k+1}$.  Recall that tensoring over $A$ means the identification $db_1 \cdot a \otimes_A db_2 = db_1 \otimes_A a db_2$.

The two defined graded algebras $\Omega^\bullet(A)$ and $D(A)$ are isomorphic.  For one forms, we have the $A$-bimodule map $\psi: \Omega^1(A) \to D(A)_1$ defined by $da \mapsto 1\otimes \overline{a}$.  It has the inverse $a_0 \otimes \overline{a_1} \mapsto a_0 \otimes a_1 - a_0a_1 \otimes 1$ (which is again independent of choice of representative $a_1$).  For higher forms, $\Omega^n \to D(A)_n$ is given by $\alpha_1 \otimes_A \ldots \otimes_A \alpha_n \mapsto \psi(\alpha_1)\cdot  \ldots \cdot \psi(\alpha_n)$ (where the non-trivial product on $D(A)$ is given in Equation \eqref{eq:prod}), whose inverse is $a_0\otimes \overline{a_1} \otimes \ldots \otimes \overline{a_n} = (a_0 \otimes \overline{a_1})\cdot (1 \otimes \overline{a_2})\ldots (1 \otimes \overline{a_n}) \mapsto \psi^{-1}(a_0\otimes \overline{a_1}) \otimes_A \psi^{-1}(1\otimes \overline{a_2}) \otimes_A \ldots \otimes_A \psi^{-1}(1\otimes \overline{a_n})$.

The Karoubi-de Rham complex is defined as
\begin{equation}
	DR^\bullet (A) := \Omega^\bullet (A) / [\Omega^\bullet (A),\Omega^\bullet (A)]
	\label{eq:DR}
\end{equation}
where $[a,b] := ab - (-1)^{ij} ba$ is the graded commutator for a graded algebra.  $d$ descends to be a well-defined differential on $DR^\bullet (A)$.  Note that $DR^\bullet (A)$ is not an algebra since $[\Omega^\bullet (A),\Omega^\bullet (A)]$ is not an ideal.  $DR^\bullet (A)$ is the non-commutative analog for the space of de Rham forms.  Moreover, there is a natural map by taking trace to the space of $G$-invariant differential forms on the space of representations $R(A)$:
\begin{equation}
DR^{\bullet}\left(A\right)\rightarrow \Omega ^{\bullet}\left(R\left(A\right)\right)^{G}.  \label{eq:DR-map}
\end{equation}

$DR^0(A)$ and $DR^1(A)$ will be the most relevant to us.  We have $DR^0(A) = A/[A,A]$ and $DR^1(A) = \Omega^1(A)/[A,\Omega^1(A)]$. 

Dually, derivations $\theta \in \Der (A)$ play the role of vector fields.  A derivation $\delta: A \to A$ is a linear map satisfying $\delta (ab) = \delta(a)\cdot b + a\cdot \delta(b)$.  $\Der (A)$ is the vector space of all derivations.  We have the $A$-bimodule map $\iota_\theta: \Omega^1(A)\to A$, $\iota_\theta(da) := \theta(a)$ called contraction.  $\iota_\theta$ extends to $\Omega^\bullet (A) \to \Omega^{\bullet-1} (A)$ by using graded Leibniz rule, and descends to $DR^\bullet(A) \to DR^{\bullet-1}(A)$.

The following version of differential forms relative to a subalgebra \cite{CQ} will be useful for framings and quivers.  Let $B \subset A$ be a commutative subalgebra.  We take
$$ D(A/B)_n := A \otimes_B \bar{A} \otimes_B \ldots \otimes_B \bar{A} $$
where $\bar{A}$ is the vector space
$$\bar{A} := A/B. $$
Then we repeat the same definitions as above for $DR^\bullet (A/B)$.  Note that zero-th forms are the same as before: $DR^0\bullet (A/B) = DR^0\bullet (A)$.  There is a natural map \cite{Ginzburg-quiver}
$$ DR^\bullet (A/B) \to \Omega ^{\bullet}\left(R_B\left(A\right)\right)^{G_B}$$
where $R_B(A)$ is the set of $A$-modules whose restriction to $B$ equals to a prescribed $B$-module, and $G_B$ is the subgroup in $\GL(V)$ that preserves the prescribed $B$-bimodule structure.

In the context of $A$ being the path algebra of a quiver, we shall take $B$ to be the subalgebra generated by the trivial paths $1_i$ at all vertices $i\in Q_0$.  Then a differential form 
$$a_0 (da_1) a_2 \ldots (da_k) \in DR^\bullet (A/B)$$ 
is non-zero only if the paths $a_i$ can be concatenated: $t(a_j) = h(a_{j+1})$ for all $j \in \Z/(k+1)$.  In this case a prescribed $B$-module structure on $V$ is given by a decomposition $V = \bigoplus_{i\in Q_0} V_i$ and $1_i$ acts as the projection $V \to V_i$.  Then $G_B = \prod_{i\in Q_0} \GL(V_i)$.

\subsubsection{Application to linear machine learning} 
Now we come back to the context of the last subsection.  The additional ingredient we need to take care of is the equivariant family of Hermitian metrics $h$ on the $A$-modules.

To precisely match the language, first let's modify the definition for $\cA_{\mathrm{m}}$ (Equation \eqref{eq:A_m}) as follows.  Recall that the framing vector space $F = F_{\mathrm{in}}\oplus F_{\mathrm{out}} \oplus F_{\mathrm{m}} = \C^{n_{\mathrm{in}}}\oplus \C^{n_{\mathrm{out}}} \oplus \C^{n_{\mathrm{m}}}$, where $\dim F = n$.  Then a framing $e$ can be written as $(e_{1} \ldots e_{n})$ where $e_j \in V$, and $e^*$ is the column vector $(e^*_{1}, \ldots ,e^*_{n})$ where $e^*_j \in V^*$.  

First we take the augmentation $$\cA^\mathfrak{e} := \cA\langle 1_{F}, \mathfrak{e}_{j}: j=1,\ldots, n \rangle / I $$
where $I$ is the two-sided ideal generated by $1_{F} \cdot \mathfrak{e}_j,\,\mathfrak{e}_{j} \cdot 1_{F} - \mathfrak{e}_{j},\,1_\cA \cdot \mathfrak{e}_{j} - \mathfrak{e}_{j}, \, \mathfrak{e}_{j}\mathfrak{e}_{k},\, \, \mathfrak{e}_{j} \cdot a, 
a\cdot 1_{F}, \, 1_{F}\cdot a$ for all $a \in \cA, j,k =1,\ldots, n$. 

Then we take its \emph{doubling} $\hat{\cA}$, which is generated by two copies of $\cA^\mathfrak{e}$ (whose generators are denoted by $a,1_{F}, \mathfrak{e}_{j}$ and $a^*,1_{F}^*, \mathfrak{e}_{j}^*$ respectively), quotient out the ideal of relations $1_\cA - 1_\cA^*, 1_F - 1_F^*$.  The unit of $\hat{\cA}$ is
$$1_{\hat{\cA}} = 1_F + 1_\cA.$$
We also use the rule $(ab)^*:=b^*a^*$ to define the formal adjoint of a general element in $\hat{\cA}$.

\begin{remark}
	This doubling procedure is standard in the construction of Nakajima quiver varieties, which is an algebraic analog of taking the cotangent bundle (or complexification) of a variety.  We will restrict to a section to go back to $[R/G]$.
\end{remark}


In the notation of the last subsection, we take $A = \hat{\cA}$ and the commutative subalgebra 
$$B = \Span_\C \{1_F,1_\cA\} \subset \hat{\cA}.$$  
Consider $V \oplus \C$.  We fix its $B$-module structure in the way that $1_\cA$ and $1_F$ act as $(\Id_V,0)$ and $(0,\Id_\C)$ respectively.  $V \oplus \C$ can be equipped with $\hat{\cA}$-module structure that restricts to be this fixed $B$-module structure.

\begin{lemma} \label{lem:rep-Ahat}
	Given a Hermitian family of framed modules $(\cA, V, F, h)$, there is a one-to-one correspondence between elements in $R = \mathrm{Hom}_{\mathrm{alg}}\left(\cA,\mathrm{End}\left(V\right)\right)\times \mathrm{Hom}\left(F,V\right)$ and $\hat{\cA}$-modules of the form $V\oplus \C$ that respect the $B$-module structure and have $\mathfrak{e}_{j}^*, a^*$ acting as the adjoints of $e_j$ and $w(a)$ respectively with respect to $h$.
\end{lemma}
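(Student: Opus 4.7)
The plan is to exhibit explicit mutually-inverse maps between $R$ and the set of $\hat{\cA}$-module structures on $V\oplus\C$ subject to the stated constraints. In the forward direction, given $(w,e)\in R$ with $e_j := e(\epsilon_j)\in V$, I would define the action of each generator on $(v,c)\in V\oplus\C$ by
\begin{align*}
1_{\cA}\cdot(v,c) &:= (v,0), & 1_F\cdot(v,c) &:= (0,c), & a\cdot(v,c) &:= (w(a)v,0),\\
\mathfrak{e}_j\cdot(v,c) &:= (c\,e_j,0), & a^*\cdot(v,c) &:= (w(a)^{*_h}v,0), & \mathfrak{e}_j^*\cdot(v,c) &:= (0,\,h_{(w,e)}(e_j,v)),
\end{align*}
where $w(a)^{*_h}$ denotes the $h_{(w,e)}$-adjoint. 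One then verifies the defining relations of $\cA^\mathfrak{e}$ (e.g.\ $1_F\mathfrak{e}_j=0$, $\mathfrak{e}_j 1_F=\mathfrak{e}_j$, $1_\cA \mathfrak{e}_j = \mathfrak{e}_j$, $\mathfrak{e}_j\mathfrak{e}_k=0$, $\mathfrak{e}_j a = 0$, $a 1_F = 1_F a = 0$) directly on $V\oplus\C$; the starred copy holds by the same computations applied to Hilbert-space adjoints. The algebra-homomorphism property of $w$ is equivalent to the module axiom on products in $\cA$.

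For the reverse direction, given a $\hat{\cA}$-module structure on $V\oplus\C$ satisfying the hypotheses, the prescribed $B$-structure fixes how $1_\cA$ and $1_F$ act. For $a\in\cA$ the relations $a\cdot 1_F = 1_F\cdot a = 0$ together with $1_{\hat{\cA}} = 1_\cA + 1_F$ force $a$ to act as an endomorphism supported on the $V$-summand, defining $w(a)\in\End(V)$; associativity makes $w$ an algebra homomorphism. Similarly $1_\cA\cdot\mathfrak{e}_j = \mathfrak{e}_j = \mathfrak{e}_j\cdot 1_F$ forces $\mathfrak{e}_j$ to act as a linear map $\C\to V$, and setting $e_j := \mathfrak{e}_j\cdot(0,1)$ recovers a framing $e : F\to V$. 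The hypothesis that $a^*$ and $\mathfrak{e}_j^*$ act as $h_{(w,e)}$-adjoints pins down the remaining actions uniquely, and by construction the two procedures are mutually inverse.

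The bulk of the proof is bookkeeping through the long list of relations. The only conceptual subtlety is the compatibility of the doubling identifications $1_\cA = 1_\cA^*$, $1_F = 1_F^*$ with the anti-involution $(ab)^* = b^*a^*$; this forces us to equip $V\oplus\C$ with the direct sum of $h_{(w,e)}$ on $V$ and the standard metric on the $\C$-summand, under which the projections onto the two summands become orthogonal and hence self-adjoint. This is precisely where the \emph{Hermitian} assumption in ``Hermitian family of framed modules'' enters — without a positive-definite Hermitian form we could not realize the doubling relations consistently on $V\oplus\C$.
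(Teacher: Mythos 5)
Your proposal is correct and follows essentially the same route as the paper: define the $\hat{\cA}$-module structure on $V\oplus\C$ by letting $\cA$ act through $w$ on the $V$-summand and by zero on $\C$, letting $\mathfrak{e}_j$ act as $e_j:\C\to V$, letting the starred generators act as $h$-adjoints, and in the reverse direction using the prescribed $B$-module structure to force these supports and recover $(w,e)$. Your closing remark about the metric on $V\oplus\C$ is a harmless elaboration (the adjoint conditions are already part of the lemma's hypotheses), and the extra relation-checking you describe is the same bookkeeping the paper leaves implicit.
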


\begin{proof}
Given $(w,e) \in R$, the $\hat{\cA}$-module structure on $V\oplus \C$ is defined as follows.  $w$ gives the action of $\cA$ on $V$, and $\cA$ acts on the component $\C$ by zero.  $\mathfrak{e}_{j}$ acts as the linear map $e_j: \C \to V$ where $e_j$ is the $j$-th column of $e$, and acts on $V$ trivially.  
$\mathfrak{e}_{j}^*$ and $a^*$ act on the component $\C$ by zero, and act as the adjoint maps of $\mathfrak{e}_j$ and $w(a)$ with respect to $h$.  The adjoint maps are
$$e_j^{*_h}: V \to \C,  \, e_j^{*_h}(v) = h(e_j,v)$$
and
$$
w(a)^{*_h} = h_{(w,e)}^{-1} w(a)^* h_{(w,e)}
$$
in matrix form.

Conversely, since the $\hat{A}$-module is required to restrict as the given $B$-module structure, we must have $\cA$ acting trivially on the component $\C$, $\mathfrak{e}_{j}$ acting trivially on $V$, and $\mathfrak{e}_{j}^*$ acting trivially on $\C$.  (For instance, $a = a \cdot 1_\cA$ acts as $(a,0)$ on $V\oplus \C$.)  Then the action of $\cA$ and $(\mathfrak{e}_{j}: j=1,\ldots,n)$ gives an element in $R$.
\end{proof}

Similar to \eqref{eq:DR-map}, we have the following map for $\hat{\cA}$.  The only difference is that for the forms $d \mathfrak{e}_{j}^*$ and $da^*$, the corresponding forms on $$R=\mathrm{Hom}_{\mathrm{alg}}\left(\cA,\mathrm{End}\left(V\right)\right)\times \mathrm{Hom}\left(F,V\right)$$ are defined using the metrics $h$.

\begin{prop} \label{prop:lin-ind-form}
	Given a Hermitian family of framed modules $(\cA, V, F, h)$,
	there is a (degree-preserving) map
	$$ DR^\bullet (\hat{\cA}/B) \to \Omega ^{\bullet}\left(R\right)^{G_B}$$
	that commutes with differential, and equals to the trace of the corresponding representations given in Lemma \ref{lem:rep-Ahat} when restricted to $DR^0 (\hat{\cA}/B) \to \Omega ^0\left(R\right)^{G_B}$.
\end{prop}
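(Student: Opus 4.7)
The plan is to apply the universal property of the noncommutative differential forms $D(\hat{\cA}/B)$ recalled in the excerpt, with the key new input being the Hermitian family $h$ which supplies realizations of the formal adjoint generators.

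First, I would define an algebra homomorphism
$$\psi \colon \hat{\cA} \longrightarrow \Gamma_0 := \cO(R) \otimes_\C \End(V\oplus \C)$$
by sending generators to their matrix-valued realizations from Lemma \ref{lem:rep-Ahat}: $a \mapsto w(a)\oplus 0$, $\mathfrak{e}_j \mapsto e_j$ viewed as a linear map $\C \to V$, $1_\cA \mapsto P_V$, $1_F \mapsto P_\C$, and the formal adjoints $a^* \mapsto h_{(w,e)}^{-1} w(a)^* h_{(w,e)}$ and $\mathfrak{e}_j^* \mapsto h_{(w,e)}(e_j,\cdot)$. All defining relations of $\hat{\cA}$ are satisfied by construction (in particular $1_\cA + 1_F = \Id_{V\oplus\C}$), and smoothness in $(w,e)\in R$ follows from that of $h$.

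Second, I would view $\Gamma^\bullet := \Omega^\bullet(R)\otimes_\C \End(V\oplus \C)$ as a dg-algebra, with differential $d_R \otimes \Id$ and product given by wedge combined with matrix multiplication. By the universal property of $D(\hat{\cA})$ recalled in the excerpt, $\psi$ extends uniquely to a dg-algebra map $\tilde\psi\colon D(\hat{\cA})\to \Gamma^\bullet$, explicitly $a_0\cdot da_1\cdots da_n \mapsto \psi(a_0)\,d\psi(a_1)\wedge\cdots\wedge d\psi(a_n)$. Composing with $\tr\colon \End(V\oplus \C)\to \C$ gives a degree-preserving, $d$-commuting map to $\Omega^\bullet(R)$, which kills graded matrix commutators and thus descends to $DR^\bullet(\hat{\cA})\to \Omega^\bullet(R)$. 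The descent to the relative version $DR^\bullet(\hat{\cA}/B)$ follows because $\psi$ sends $B$ to the constant idempotents $P_V, P_\C$ (so $d\psi(B)=0$), while the bimodule identifications enforced by $\otimes_B$ in $D(\hat{\cA}/B)$ match exactly the one-sided relations (e.g.\ $1_F \cdot \mathfrak{e}_j=0$ and $\mathfrak{e}_j\cdot 1_F = \mathfrak{e}_j$) that $\psi$ realizes on the matrix side.

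Third, for $G_B$-invariance, $G_B = \GL(V)\times \C^\times$ acts on $R$ by the standard $\GL(V)$-action on $(w,e)$ (and trivially in the $\C^\times$-factor). Equation \eqref{eq:h-equiv} ensures that $h_{(w,e)}^{-1} w(a)^* h_{(w,e)}$ and $h_{(w,e)}(e_j,\cdot)$ transform by conjugation on $\End(V\oplus\C)$ under this action — precisely as $w(a)$ and $e_j$ do — so $\psi$ is $G_B$-equivariant with respect to conjugation, and the trace is conjugation-invariant, placing the image in $\Omega^\bullet(R)^{G_B}$. At degree zero the map sends $a\mapsto \tr(\psi(a))$, i.e.\ the trace of the $\hat{\cA}$-module associated to $(w,e)$ by Lemma \ref{lem:rep-Ahat}, as required. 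The main technical point will be verifying the factorization through the relative Karoubi-de Rham complex: $\psi(1_\cA)$ and $\psi(1_F)$ are genuinely non-central idempotents, but this non-centrality precisely mirrors the one-sided $B$-bimodule relations already imposed in $\hat{\cA}$, so the relative quotient passes through.
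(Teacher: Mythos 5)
Your proposal is correct and is essentially the paper's own construction: both realize the generators (including the formal adjoints via the family $h$, whose dependence on $(w,e)$ is what produces the $dh$ correction terms) as matrix-valued forms on $R$ and then take the trace, with $G_B$-invariance coming from the equivariance \eqref{eq:h-equiv} together with conjugation-invariance of the trace. The differences are only presentational — you package well-definedness and $d$-compatibility via the universal property of $D(\hat{\cA})$ and the fact that trace kills graded commutators, where the paper writes the images of the generator one-forms explicitly and appeals to cyclicity of trace — modulo the minor caveat that the target must be smooth rather than holomorphic forms (so $C^\infty(R)\otimes\End(V\oplus\C)$ rather than $\cO(R)\otimes\End(V\oplus\C)$), exactly as the paper's subsequent remark notes.
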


\begin{proof}
	$DR^\bullet (\hat{\cA}/B)$ is generated by the one forms $da$, $da^*$, $d\mathfrak{e}_{j}$ and $d\mathfrak{e}_{j}^*$ over $\hat{\cA}$.  For  $da$ and $d\mathfrak{e}_{j}$, the corresponding matrix-valued one-forms on $R$ are obvious (by substituting $a$ and $\mathfrak{e}_{j}$ by the corresponding representing matrices $w(a)$ and $e_j$).  For $d\mathfrak{e}_{j}^*$ and $da^*$, the corresponding matrix-valued one-form over $R$ are 
	$$(\bar{\partial} e_j^*) \cdot h + e_j^* \cdot dh = (\bar{\partial} e_j^*) \cdot h + e_j^* \cdot (\bar{\partial} h + \partial h)$$
	and
	\begin{equation} \label{eq: da}
	-h^{-1}\cdot dh \cdot h^{-1} w_{a}^*h + h^{-1} (\bar{\partial} w_{a}^*) h + h^{-1} w_{a}^* dh
	\end{equation}
	respectively,
	where $h$ is now represented by a square matrix in a basis of $V$, $e_j^*$ (a row vector) and $w_a^*$ are the conjugate transpose of $e_j$ and $w_a$ respectively.  Note that $h_{(w,e)}$ is a function on $(w,e) \in R$ and so it has a non-trivial differential $dh$.
	More intrinsically, $d\mathfrak{e}_{j}^*$ corresponds to $h(\nabla e_j, \cdot) + h(e_j, \nabla \cdot)$, where $\nabla$ is the Chern connection of $h$ on the trivial vector bundle $V\times R$ (and $e_j$ is a section).
	
	Note that non-zero elements in $DR^\bullet (\hat{\cA}/B)$ are represented by loops (meaning that the source and target are the same), due to the defining equation \eqref{eq:DR}.
	The corresponding forms on $R$ are obtained by composing the above matrices and taking trace.  In particular, it is the trace of the corresponding representing matrix when restricted to $DR^0(\hat{A}/B)$.  Since trace is independent of cyclic permutations of the composition, the map $DR^\bullet(\hat{A}/B) \to \Omega^\bullet(R)$ is well-defined.   Moreover, it is obvious that it commutes with differential by definition.  
	
	Under the action of $g\in \GL(V)$, $d(w(a)) \mapsto g \cdot d(w(a)) \cdot g^{-1}$, $de_j \mapsto g \cdot de_j$, 
	$$(\bar{\partial} e_j^*) \cdot h + e_j^* \cdot dh \mapsto (\bar{\partial} e_j^*) g^* \cdot (g^*)^{-1}hg^{-1} + e_j^* g^* \cdot (g^*)^{-1}dh\,g^{-1} = ((\bar{\partial} e_j^*) \cdot h + e_j^* \cdot dh) \cdot g^{-1}  $$
	and \eqref{eq: da} transforms by $g\,(\cdot)\, g^{-1}$,
	using the $\GL(V)$-equivariance of the family of metrics $h$.
	Since trace is invariant under conjugation, the corresponding forms on $R$ are $G_B$-invariant.  Here $G_B = \C^\times \times \GL(V)$, where $\C^\times$ is Abelian and acts trivially on $R$.  
\end{proof}

\begin{remark}
	Since the above uses the family of Hermitian metrics $h$, the resulting forms in $\Omega^\bullet(R)^{G_B}$ are no longer holomorphic.  In the usual algebraic construction, we have a map $\rho$ from $DR^p (\hat{\cA}/B)$ to $\GL(V)$-invariant holomorphic $(p,0)$-forms on 
	$$(\mathrm{Hom}_{\mathrm{alg}}\left(\cA,\mathrm{End}\left(V\right)\right))^2 \times \mathrm{Hom}\left(F,V\right)\times \mathrm{Hom}\left(V,F\right).$$
	
	The above can be understood as a composition of the usual map 
	$$\rho: DR^\bullet (\hat{\cA}/B) \to \Omega^\bullet (R\times (\mathrm{Hom}_{\mathrm{alg}}\left(\cA,\mathrm{End}\left(V\right)\right) \times \mathrm{Hom}\left(V,F\right))) $$
	together with pulling back by the smooth section of $R\times (\mathrm{Hom}_{\mathrm{alg}}\left(\cA,\mathrm{End}\left(V\right)\right) \times \mathrm{Hom}\left(V,F\right)) \to R$ defined by 
	$$e_j' = h_{(w,e)}(e_j, \cdot) = e_j^* \cdot h_{(w,e)},\,\, w_a' = h_{(w,e)}^{-1} w_a^* h_{(w,e)}.$$ 
	On the LHS, $(e_j': j=1,\ldots,n) \in \mathrm{Hom}\left(V,F\right)$ and $w_a' \in \End(V)$ denotes fiber coordinates; on the RHS, $e_j^*$ is the conjugate transpose of the column vector in $(e_1\ldots e_n) \in \Hom(F,V)$.  Note that the action of $\GL(V)$ on both sides of the first and second equations are right multiplication by $g^{-1}$ and conjugation $g\,(\cdot)\,g^{-1}$ respectively.
\end{remark}

Now define the subalgebra
$$\cL(\hat{\cA}) := \bigoplus_{j,k=1}^{n} \mathfrak{e}_{j}^* \cdot \hat{\cA} \cdot \mathfrak{e}_{k}.$$
Recall that elements in $\cL(\hat{\cA})$ are understood as linear algorithms.

In $DR^0(A/B)=A/(B+[A,A])$ (vector-space quotient), note that elements that do not form loop (for instance, $a\cdot \mathfrak{e}_{j}$ and $\mathfrak{e}_{j}^* \cdot a$) are in the zero class.  Moreover, loops that are cyclic permutation of each other are identified as the same class.  

In our context,
elements in $\cL(\hat{\cA})$ are loops, and descend to non-trivial elements in $DR^0(A/B)$.  As a consequence:

\begin{corollary} \label{cor:induce-fcn}
	An element in $\cL(\hat{\cA})$ induces a $G$-invariant function $f$ on $R$ where $G=\GL(V)$.  Its differential lies in $DR^1(A/B)$ and induces the corresponding differential $df \in \Omega^1(R)^{G}$.
\end{corollary}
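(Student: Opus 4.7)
The plan is to deduce the corollary as an essentially immediate consequence of Proposition \ref{prop:lin-ind-form}, once the preceding remarks about loops in $\cL(\hat{\cA})$ are made precise. First I would fix an element $\alpha = \sum_{j,k} \mathfrak{e}_j^* \cdot x_{jk} \cdot \mathfrak{e}_k \in \cL(\hat{\cA})$ and argue that it descends to a well-defined class in $DR^0(\hat{\cA}/B) = \hat{\cA}/(B + [\hat{\cA},\hat{\cA}])$. The key point, already noted right before the corollary, is that since elements of $B$ are spanned by $1_F,1_\cA$ and non-loop expressions are commutators modulo $B$ (e.g.\ $a \cdot \mathfrak{e}_j = [a,\mathfrak{e}_j] + \mathfrak{e}_j\cdot a$ with the latter annihilated by the defining relations of $\hat{\cA}$), every loop $\mathfrak{e}_j^* \cdot x \cdot \mathfrak{e}_k$ gives a (typically non-zero) class in $DR^0(\hat{\cA}/B)$.

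Second, I apply Proposition \ref{prop:lin-ind-form} in degree zero to obtain a function $f_\alpha$ on $R$ which is $G_B$-invariant. By the last sentence of that proposition, this map is the trace of the corresponding $\hat{\cA}$-representation constructed in Lemma \ref{lem:rep-Ahat}; concretely, $f_\alpha(w,e) = \sum_{j,k} h_{(w,e)}(e_j, \,\widetilde{x}_{jk}(w,e)\cdot e_k)$, where $\widetilde{x}_{jk}(w,e)$ denotes the linear operator on $V$ obtained by interpreting $x_{jk}$ via $w$ and the Hermitian adjoints with respect to $h_{(w,e)}$. I then reduce $G_B$-invariance to $G$-invariance: $G_B = \C^\times \times \GL(V)$, but the $\C^\times$ factor comes from $1_F$ acting on the auxiliary summand $\C$ in $V\oplus \C$ of Lemma \ref{lem:rep-Ahat} and so acts trivially on $R$. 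Hence $f_\alpha$ is $\GL(V)$-invariant.

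For the differential part, I note that $d\alpha \in DR^1(\hat{\cA}/B)$ is well defined since $d$ is a well-defined differential on $DR^\bullet$ (by \cite{CQ} and the passage to the commutator quotient). Because the map of Proposition \ref{prop:lin-ind-form} commutes with $d$, the image of $d\alpha$ in $\Omega^1(R)^{G_B}$ is precisely $df_\alpha$, which by the same triviality of the $\C^\times$-factor lies in $\Omega^1(R)^G$.

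I do not expect a serious obstacle: the corollary is essentially a re-packaging of Proposition \ref{prop:lin-ind-form} in the loop subalgebra $\cL(\hat{\cA})$. The only points requiring care are the verification that loops survive in $DR^0(\hat{\cA}/B)$ (handled by the quotient computation above) and the reduction from $G_B$- to $G$-invariance (handled by observing that the $\C^\times$ factor acts trivially on $R$). Both are bookkeeping rather than new input.
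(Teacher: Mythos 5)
Your proposal is correct and follows essentially the same route as the paper, which presents this corollary as an immediate consequence of Proposition \ref{prop:lin-ind-form} together with the observation that loops $\mathfrak{e}_j^*\cdot x\cdot \mathfrak{e}_k$ descend to classes in $DR^0(\hat{\cA}/B)$ while the $\C^\times$ factor of $G_B$ acts trivially on $R$. Your write-up merely makes explicit the bookkeeping the paper leaves implicit, and all the steps check out.
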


Note that the target of $\mathfrak{e}_{j}^*$ and the domain of $\mathfrak{e}_{j}$ are the one-dimensional vector space $\C$.  Thus the matrix corresponding to $\mathfrak{e}_{j}^* \cdot a \cdot \mathfrak{e}_{k} \in \cL(\hat{\cA})$ is one-by-one whose trace just equals to itself.

An $(n\times n)$-matrix whose entries lie in $\cL(\hat{\cA})$ gives a linear function 
$F \to F$
over each point in $[R/G]$.  We can also restrict it to 
$$f_{[w,e]}: F_{\mathrm{in}} \to F_{\mathrm{out}} $$
by taking an $(n_{\mathrm{out}}\times n_{\mathrm{in}})$-matrix whose entries $\gamma_{jk}$ belong to $\mathfrak{e}_{\mathrm{out},k}^* \cdot \hat{\cA} \cdot \mathfrak{e}_{\mathrm{in},j}$ where $(\mathfrak{e}_{\mathrm{in},j}: j=1,\ldots,n_{\mathrm{in}})$ denotes the part of $(\mathfrak{e}_{j}:j=1,\ldots,n)$ that has source in $F_{\mathrm{in}}$ (and similar for $\mathfrak{e}_{\mathrm{out},k}$).
This produces a linear machine function $f^\gamma_{[w,e]}$ corresponding to a linear algorithm $\gamma$.

The cost function can also be defined algebraically as an element in $DR^0(A/B)$.  Namely, given a function $f: F_{\mathrm{in}} \to F_{\mathrm{out}}$ and fixing $v \in F_{\mathrm{in}} = \C^{n_{\mathrm{in}}}$, the expression
\begin{align*}
E &= \int_{K} \left|\left(\sum_j \gamma_{jk}\, v_j: k=1,\ldots,n_{\mathrm{out}}\right)
-f\left(v\right)\right|_{F_{\mathrm{out}}}^{2} dv \\
&= \int_{K} \sum_k \left(\sum_j \gamma_{jk}\, v_j - f_k(v)\right)\left(\sum_j \gamma_{jk}^*\, \overline{v_j} - \overline{f_k(v)}\right) dv
\end{align*}
lies in $DR^0(A/B)$.  Its differential in $DR^1(A/B)$ induces a one-form on $[R/G]$, which plays a central role in machine learning.

Suppose $\cA$ is finitely generated, and so does $\hat{\cA}$.  Let $\{x_j: j=1,\ldots,M\}$ be the generators of $\hat{\cA}$.  Then the algebraic Jacobian ring
$$ DR^0(\hat{\cA}/B) / \langle \partial_{x_j} E : j=1,\ldots,M \rangle, $$
where $\partial_{x_j} E$ is the cyclic differential, is useful in capturing the critical locus of $E$.

\subsection{Differential forms for near-ring}

The associative algebra $A$ in the last subsection captures linear operations of a computing machine, and has interesting noncommutative geometries.  In this subsection, we incorporate non-linear operations and extend the geometric construction to a near-ring.

\subsubsection{Near-rings and their representations}

\begin{defn} \label{def:near-ring}
	A near-ring is a set $\tilde{A}$ with two binary operations $+, \circ$ called addition and multiplication such that 
	\begin{enumerate}
		\item  $\tilde{A}$ is a group under addition.
		\item Multiplication is associative.
		\item Right multiplication is distributive over addition:
		$$ (x+y)\circ z = x\circ z + y\circ z$$
		for all $x,y,z \in \tilde{A}$.
	\end{enumerate}
\end{defn}

In this paper, the near-ring we use will be required to satisfy that:
\begin{enumerate}
	\item[(4)] $(\tilde{A},+)$ is a vector space over $\mathbb{F}=\C$, with $c\cdot (x\circ y) = (c \cdot x) \circ y$ for all $c \in \C$ and $x,y \in \tilde{A}$.
	\item[(5)] There exists $1 \in \tilde{A}$ such that $1\circ x = x = x \circ 1$.
\end{enumerate}
We call it a near-ring over $\C$ with identity, or a $\C$-near-ring with identity.

Note that $x\circ (c\cdot y) \not= c\cdot x\circ y$ in general.  The following gives a prototype example.

\begin{example}
	The set $\mathrm{Map}(V,V)$ of $\C$-valued smooth functions $f: V \to V$ on a vector space $V$ forms a near-ring over $\C$ with identity, with $+$ being the addition on the vector space, $\circ$ being the composition of functions, and $1$ being the identity function on $V$.
\end{example}
 
 \begin{defn}
 	Given a $\C$-near-ring with identity $\tilde{A}$, a $\C$-sub-near-ring is a $\C$-subspace $\tilde{A}' \subset \tilde{A}$ which is closed under the multiplication $\circ$.  $\tilde{A}'$ is called a $\C$-sub-near-ring with identity if in addition, $1 \in \tilde{A}'$.
 \end{defn}

Given an algebra $A$ and a set $S$, we have the $\C$-near-ring $A\{S\}$ defined as follows.

\begin{defn} \label{def:A-S}
	Let $A$ be a $\C$-algebra with identity and $S$ be a set.  we define the $\C$-near-ring with identity $A\{S\}$ as follows.  As a vector space,
	$$ A\{S\} := \bigoplus_{p=0}^\infty A\{S\}_p $$
	where:
	\begin{enumerate}
		\item $A\{S\}_0 = A$;
		\item Given $A\{S\}_p$ defined, $A\{S\}_{p+1}$ is spanned by the elements $a \varsigma \circ \alpha$, where $a \in A$, $\varsigma \in S$, and $\alpha \in A\{S\}_p$, subject to the relation $(a_1 \varsigma_1 + c a_2 \varsigma_2) \circ  \alpha = a_1 \varsigma_1 \circ \alpha + c a_2 \varsigma_2 \circ \alpha$ for all $c\in \C$, $a_1,a_2 \in A$.
	\end{enumerate}
	Moreover, we define $1_A \circ \varsigma = \varsigma \circ 1_A = \varsigma$.  Thus $1_A$ is also the identity for $A\{S\}$.
\end{defn}

In the application to neural network, the elements $\varsigma \in S$ are symbols for the activation functions.  
Each element of $A\{S\}$ can be recorded by a rooted tree (oriented towards the root) defined as follows.

\begin{defn}
	Given $\tilde{A}=A\{S\}$, an activation tree is a rooted tree with the following labels.
	
	\begin{enumerate}
		\item Leaves and the root are labeled by $1_{\tilde{A}}$;
		\item Edges are labeled by $a\in A$ ;
		\item Nodes that are neither leaves nor the root are labeled by $\varsigma \in S$.
	\end{enumerate}
	Each node gives the output \begin{equation}
		\sum_k a_k \varsigma_k\circ \alpha_k,
		\label{eq:preact}
	\end{equation} 
	where $a_k$ are the labels of the incoming edges, $\varsigma_k$ and $\alpha_k$ are the labels of the tails of the incoming edges and their outputs respectively.  (At a leaf, the label is $1_{\tilde{A}}$ and the output is $1_{\tilde{A}}$.) The element in $A\{S\}$ corresponding to the tree is the output of its root.
\end{defn}

\begin{remark}
	The expression \eqref{eq:preact} takes the pre-activation value as output of a node.  One can also slightly modify the definition of an activation tree and use the other convention that takes the activation value as output.  
\end{remark}

\begin{example}
	Figure \ref{fig:function-tree} shows examples of activation trees that represent elements in $A\{S\}$.  The expression corresponding to the rightmost tree is
	$$ a_0 + a_1 \varsigma_1 \circ (a_{1,0} + a_{1,1}\varsigma_{1,1}\circ a_{1,1,0}) + a_2 \varsigma_2\circ a_{2,0}$$
	for some $a_0,a_1,a_{1,0},a_{1,1},a_{1,1,0},a_{2,0} \in A$, $\varsigma_1,\varsigma_{1,1},\varsigma_2 \in S$.
	
	Note that the tree here is not the digraph (quiver) that we will consider in the later part of this paper.  The labels $a$ for the edges will be taken to be elements in the double of a quiver algebra $\hat{\cA}$ later, and required to be loops from the framing of the quiver back to itself.
\end{example}

\begin{figure}[h]
	\centering
	\includegraphics[scale=0.5]{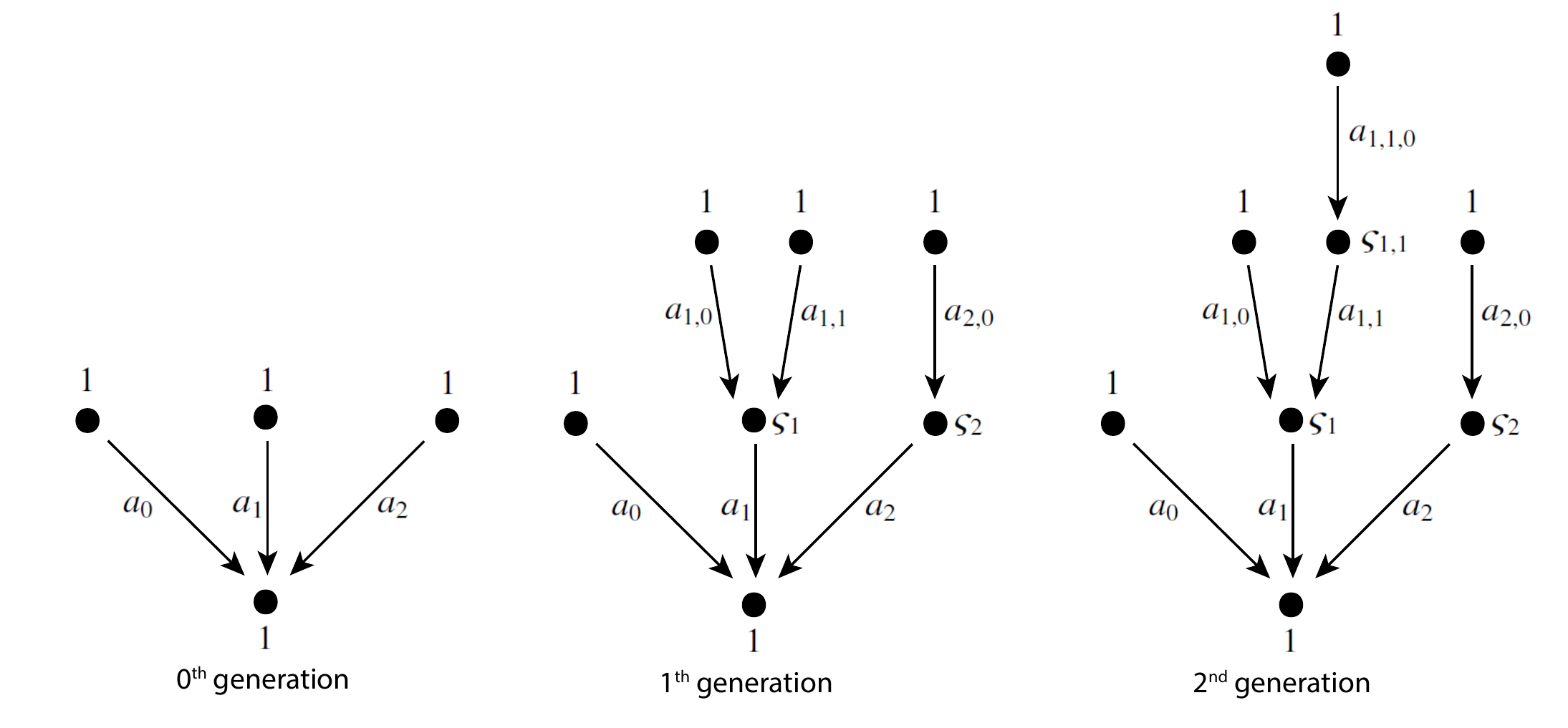}
	\caption{}
	\label{fig:function-tree}
\end{figure}

The above definition goes from a $\C$-algebra to a $\C$-near-ring.  In the reverse direction, we can define the following.

\begin{defn}
The canonical subalgebra of a $\C$-near-ring $\tilde{A}$ with identity is defined as
$$ A := \{ x \in \tilde{A}: x\circ (cy + z) = c x\circ y + x\circ z \textrm{ for all } y,z \in \tilde{A} \textrm{ and } c \in \C \}.$$
\end{defn}

It is easy to check that
\begin{lemma}
	$A$ is a $\C$-algebra with identity.
\end{lemma}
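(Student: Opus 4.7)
The plan is to verify the algebra axioms for $A$ one at a time, using the near-ring properties of $\tilde{A}$ together with the defining left-linearity condition that singles out elements of $A$.

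First, I would check that $A$ is a linear subspace of $\tilde{A}$. For closure under addition, take $x_1,x_2 \in A$ and apply right distributivity of $\tilde{A}$: $(x_1+x_2)\circ (cy+z) = x_1\circ (cy+z) + x_2\circ (cy+z)$, then expand each summand using the definition of $A$ and regroup by right distributivity once more. For closure under scalar multiplication, use property (4) from the paper, namely $\lambda \cdot (x\circ w) = (\lambda x)\circ w$, combined with the defining identity for $x$. The element $1 \in A$ because $1\circ (cy+z) = cy+z$, and $cy + z = c(1\circ y) + (1\circ z)$ by the identity axiom for $1$.

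Next I would show closure under $\circ$. For $x_1,x_2 \in A$, I compute
\begin{align*}
(x_1\circ x_2)\circ (cy+z) &= x_1 \circ (x_2\circ (cy+z)) \\
&= x_1 \circ (c\,(x_2\circ y) + x_2\circ z) \\
&= c\, x_1\circ(x_2\circ y) + x_1\circ(x_2\circ z) \\
&= c\,(x_1\circ x_2)\circ y + (x_1\circ x_2)\circ z,
\end{align*}
using associativity, then that $x_2 \in A$, then that $x_1 \in A$, and finally associativity again. So $x_1\circ x_2 \in A$.

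Finally, to recognize $A$ as a $\C$-algebra with identity, I note that associativity of $\circ$ is inherited from $\tilde{A}$, and the two distributive laws of an algebra must be checked: right distributivity $(x_1+x_2)\circ w = x_1\circ w + x_2\circ w$ and the scalar compatibility $(\lambda x)\circ w = \lambda(x\circ w)$ hold in all of $\tilde{A}$, while left distributivity $x\circ (y+z) = x\circ y + x\circ z$ and $x\circ(\lambda y) = \lambda(x\circ y)$ for $x \in A$ are exactly the defining condition applied with $c=1$ and $z=0$ respectively. The main (mildly) nontrivial step is the closure of $A$ under $\circ$ above; everything else is a direct unwinding of axioms. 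No serious obstacle is expected.
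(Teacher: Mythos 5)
Your proof is correct; the paper itself omits the verification entirely (it simply states ``It is easy to check''), and your argument is exactly the routine check one would write down, with the composition-closure computation being the only step of any substance. One micro-point: deducing $x\circ(\lambda y)=\lambda(x\circ y)$ from the defining condition with $z=0$ uses $x\circ 0=0$, but this follows immediately from the left distributivity you already established (take $y=z=0$), so nothing is missing.
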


\begin{example}
	For the above example that $\tilde{A}=\mathrm{Map}(V,V)$, the canonical subalgebra is the subset $\mathrm{Lin}(V)$ of linear endomorphisms of $V$.  This can be seen by taking $y,z\in \mathrm{Map}(V,V)$ to be constant maps in the above definition of $A$.
\end{example}

Given a subset $S$ of $\tilde{A}$, we have the sub-near-ring generated by $S$ defined as follows.

\begin{defn}
	The sub-near-ring of $\tilde{A}$ generated by $S$, which is denoted as $\langle S \rangle_{\tilde{A}}$, is defined inductively as follows.  As a vector space,
	$$ \langle S \rangle_{\tilde{A}} := \sum_{p=0}^\infty \langle S \rangle_{\tilde{A},p} \subset \tilde{A}$$
	where:
	\begin{enumerate}
		\item $\langle S \rangle_{\tilde{A},0} = A$;
		\item Given $\langle S \rangle_{\tilde{A},p}$ defined, $\langle S \rangle_{\tilde{A},p+1}$ is spanned by the elements $a \circ \varsigma \circ \alpha$, where $a \in A$, $\varsigma \in S$, and $\alpha \in \langle S \rangle_{\tilde{A},p}$.
	\end{enumerate}

	$\tilde{A}$ is said to be finitely generated if $\tilde{A} = \langle S \rangle_{\tilde{A}}$ for a finite subset $S \subset \tilde{A}$.  $S\subset \tilde{A}$ is said to be a free generating subset if $\langle S \rangle_{\tilde{A}} = A\{S\}$.
\end{defn}

It is easy to check that:
\begin{prop}
	$\langle S \rangle_{\tilde{A}}$ defined above is a sub-near-ring.
\end{prop}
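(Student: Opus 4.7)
The statement asserts that the inductively defined set $\langle S \rangle_{\tilde{A}}$ is a sub-near-ring, i.e., a $\C$-subspace of $\tilde{A}$ closed under the composition $\circ$. The subspace condition is immediate from the definition, since $\langle S \rangle_{\tilde{A}}$ is presented as a sum of $\C$-subspaces $\langle S \rangle_{\tilde{A},p}$; the whole content of the proof is therefore closure under $\circ$.

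The plan is a straightforward induction on the level $p$, proving that for every $x \in \langle S \rangle_{\tilde{A},p}$ and every $y \in \langle S \rangle_{\tilde{A}}$ one has $x \circ y \in \langle S \rangle_{\tilde{A}}$. Two general moves are available from the outset. First, the right-distributive axiom (3) combined with the $\C$-linearity axiom (4) lets us reduce, for fixed $y$, to the case where $x$ is a single spanning element rather than a linear combination. Second, associativity of $\circ$ lets us regroup compositions freely.

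For the base case $p=0$, $x \in A$ lies in the canonical subalgebra, where left-distributivity over $\C$-linear combinations is available by the very definition of $A$. Expanding $y = \sum_j c_j y_j$ in spanning elements of $\langle S \rangle_{\tilde{A}}$, this gives $x \circ y = \sum_j c_j (x \circ y_j)$; each summand is either $x y_j \in A$ (when $y_j \in A$) or, by associativity, of the form $(x b) \circ \varsigma \circ \beta$ when $y_j = b \circ \varsigma \circ \beta$, with $x b \in A$. Either way $x \circ y_j$ is a spanning element of $\langle S \rangle_{\tilde{A}}$. For the inductive step, a spanning element $x = a \circ \varsigma \circ \alpha$ of $\langle S \rangle_{\tilde{A},p}$ with $\alpha \in \langle S \rangle_{\tilde{A},p-1}$ satisfies $x \circ y = a \circ \varsigma \circ (\alpha \circ y)$ by associativity; the inductive hypothesis applied to $\alpha$ places $\alpha \circ y$ in $\langle S \rangle_{\tilde{A}}$, after which the whole expression is recognised as a generator of the next level.

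The delicate point — and the only reason the argument is non-trivial — is the failure of left-distributivity in a near-ring. Once $\alpha \circ y$ has been placed in $\langle S \rangle_{\tilde{A}}$, one cannot break the outer $\varsigma$ across any decomposition of $\alpha \circ y$ into level-by-level summands. Getting past this is a matter of reading the inductive definition as implicitly cumulative, so that $\langle S \rangle_{\tilde{A},p} \subset \langle S \rangle_{\tilde{A},p+1}$ and hence an element $a \circ \varsigma \circ z$ can be regarded as a single generator of $\langle S \rangle_{\tilde{A},q+1}$ for any $z \in \langle S \rangle_{\tilde{A},q}$; the expression $\alpha \circ y$, however it was produced, is then fed whole into the next level without being split. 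This is the main obstacle to the proof, but it is more a matter of setting up the filtration correctly than of mathematical substance.
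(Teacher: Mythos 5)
The paper itself offers no argument here (it is introduced with ``It is easy to check that''), so there is nothing to compare against; your write-up supplies the missing proof, and its skeleton --- right-distributivity plus axiom (4) to reduce the left factor to a single spanning element, left-linearity of elements of the canonical subalgebra $A$ for the base case, and associativity to rewrite $(a\circ\varsigma\circ\alpha)\circ y$ as $a\circ\varsigma\circ(\alpha\circ y)$ for the inductive step --- is exactly right. You also correctly isolate the one genuinely delicate point: since $\varsigma$ is not left-distributive, one cannot split $\varsigma\circ(\alpha\circ y)$ across the level decomposition of $\alpha\circ y$.

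Your resolution of that point, however, is stated too casually. The inclusion $\langle S\rangle_{\tilde A,p}\subset\langle S\rangle_{\tilde A,p+1}$ is \emph{not} implicit in the definition as written: $\langle S\rangle_{\tilde A,0}=A$, while $\langle S\rangle_{\tilde A,1}$ is spanned by elements $a\circ\varsigma\circ\alpha$ with $a,\alpha\in A$, and there is no reason for $1_A$ (say) to lie in that span --- in $\mathrm{Map}(V,V)$ with $S=\{\sigma\}$ nonlinear, the identity is not a combination of maps $a\circ\sigma\circ\alpha$. So ``reading the definition as cumulative'' is not a reading but a genuine replacement: one must redefine the filtration by $F_0:=A$ and $F_{p+1}:=F_p+\mathrm{span}\{a\circ\varsigma\circ\alpha : a\in A,\ \varsigma\in S,\ \alpha\in F_p\}$, so that nesting holds by construction and $\alpha\circ y\in F_m$ can be fed whole into level $m+1$. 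With the paper's literal (non-nested) levels, an element such as $a\circ\varsigma\circ(w_0+w_1)$ with $w_0\in A$ and $w_1\in\langle S\rangle_{\tilde A,1}$ --- which does arise as a product of two elements of $\langle S\rangle_{\tilde A}$ --- need not lie in any single $\langle S\rangle_{\tilde A,q}$, so the set as literally defined need not be closed under $\circ$. In short: your argument is the correct one, but make the cumulative filtration an explicit redefinition rather than an interpretive gloss, since the proposition is only provable (and, strictly, only true) for that version of the construction.
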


\begin{example}
	Let's continue the example of the set of functions $\mathrm{Map}(V,V)$.  Fix a collection of non-linear functions $\sigma_1,\ldots,\sigma_N:V\to V$.  This corresponds to a finitely generated sub-near-ring $A\{\sigma_1,\ldots,\sigma_N\} \subset \tilde{A}$. $\sigma_1,\ldots,\sigma_N$ can be chosen such that they are not related by iterated compositions and linear combinations.  Then they form a free generating subset.
\end{example}

\begin{defn}
	A morphism of $\C$-near-rings with identities is a map $\Psi: \tilde{A}_1 \to \tilde{A}_2$ that satisfies:
	\begin{enumerate}
		\item $\Psi(x+y)=\Psi(x)+\Psi(y)$;
		\item 
		$\Psi(x\circ y) = \Psi(x) \circ \Psi(y)$;
		\item $\Psi(1_{\tilde{A}_1})=1_{\tilde{A}_2}$.
	\end{enumerate}
	$\Psi$ is said to be a strong morphism if in addition, it satisfies:
	\begin{enumerate}
		\item[(4)] $\Psi$ maps the canonical subalgebra of $\tilde{A}_1$ to that of $\tilde{A}_2$.
	\end{enumerate}
\end{defn}

It easily follows from the definition that a surjective morphism of $\C$-near-rings is automatically strong.

Now we consider modules of a $\C$-near ring.

\begin{defn}
	For a $\C$-near ring $\tilde{A}$ with identity, an $\tilde{A}$-module is a $\C$-vector space $V$ together with a strong  $\C$-near-ring morphism $\tilde{A}\to \mathrm{Map}(V,V)$.
	
	For two $\tilde{A}$-modules $V,W$, a morphism from $V$ to $W$ is a map $\phi\in \mathrm{Map}(V,W)$ that commutes with the actions of $\tilde{A}$:
	$$ \phi \circ V(\alpha)(v) = W(\alpha)\circ \phi(v)$$
	for all $\alpha \in \tilde{A}$.
\end{defn}

It follows from the above definition that an $\tilde{A}$-module is automatically an $A$-module (where $A$ denotes the canonical subalgebra).  

Essentially, the method of deep learning is performing a (stochastic) gradient descent on a certain subvariety of the space of $\tilde{A}$-modules for a fixed near-ring $\tilde{A}$.  However, such a space of $\tilde{A}$-modules is typically infinite-dimensional (since the choice of non-linear maps is infinite-dimensional).  
It is important to systematically construct explicit $\tilde{A}$-modules.  A useful construction for $\tilde{A}=A\{\varsigma_1,\ldots,\varsigma_N\}$ is the following.  Given an algebra and an $A$-module $V$, a choice of $\sigma_1,\ldots,\sigma_N\in \mathrm{Map}(V,V)$ enhances $V$ to be an $A\{\varsigma_1,\ldots,\varsigma_N\}$-module.  (Here, $\varsigma_l$ are the formal symbols corresponding to $\sigma_l$.)

Unfortunately, such a correspondence between $A$-modules and $\tilde{A}$-modules does not behave well in the morphism level.  Namely, an $A$-module endomorphism $\phi \in \mathrm{Lin}(V,V)$ typically does not satisfy
$ \phi \circ \sigma_l = \sigma_l \circ \phi $
for non-linear functions 
$\sigma_l \in \mathrm{Map}(V,V)$, and hence cannot be lifted as an $\tilde{A}$-module morphism.  So we do not have a map from the space of $A$-modules to the space of $\tilde{A}$-modules that descend to isomorphism classes.

Below, we use our setting of an activation module to remedy this correspondence between $A$ and $\tilde{A}$.  See Proposition \ref{prop:A-Atilde}.

\subsubsection{Forms over near-ring}

Let $\cA$ be an algebra, and fix a framing vector space $F=\C^n$.
In Section \ref{sec:ncml}, we have taken the doubled augmented algebra $\hat{\cA}$.  Now, we consider the set $\mathrm{Mat}_F(\hat{\cA})$ of $n\times n$ matrices whose $(k,j)$-th entries lie in $\mathfrak{e}_{k}^* \cdot \hat{\cA} \cdot \mathfrak{e}_{j}$.

It is easy to check that:
\begin{lemma}
	$A:=\mathrm{Mat}_F(\hat{\cA})$ forms an algebra under matrix addition and multiplication (where multiplication between entries is given by $\hat{\cA}$).
\end{lemma}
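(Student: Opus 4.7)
The plan is to verify the algebra axioms entry-by-entry, inheriting them from the associative $\C$-algebra structure of $\hat{\cA}$. First I would note that for each ordered pair $(k,j)$ the subset $\mathfrak{e}_k^* \cdot \hat{\cA} \cdot \mathfrak{e}_j \subset \hat{\cA}$ is the image of the $\C$-linear map $\alpha \mapsto \mathfrak{e}_k^* \alpha \mathfrak{e}_j$, and is therefore a $\C$-linear subspace. Consequently $\mathrm{Mat}_F(\hat{\cA})$ is a $\C$-linear subspace of the ambient matrix ring $\mathrm{Mat}_{n\times n}(\hat{\cA})$, so closure under addition and scalar multiplication is immediate.

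The only substantive step is closure under matrix multiplication. Given $X,Y \in \mathrm{Mat}_F(\hat{\cA})$, write $X_{ki} = \mathfrak{e}_k^* \alpha_{ki} \mathfrak{e}_i$ and $Y_{ij} = \mathfrak{e}_i^* \beta_{ij} \mathfrak{e}_j$ with $\alpha_{ki},\beta_{ij} \in \hat{\cA}$. Then
\begin{equation*}
(XY)_{kj} \;=\; \sum_{i=1}^{n} X_{ki}\, Y_{ij} \;=\; \mathfrak{e}_k^* \cdot \Bigl( \sum_{i=1}^{n} \alpha_{ki}\, \mathfrak{e}_i \mathfrak{e}_i^*\, \beta_{ij} \Bigr) \cdot \mathfrak{e}_j,
\end{equation*}
and the bracketed factor is an element of $\hat{\cA}$. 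The key observation is that the cross product $\mathfrak{e}_i \mathfrak{e}_i^*$ is a bona fide element of $\hat{\cA}$: the relations defining $\hat{\cA}$ come from the two copies of $\cA^{\mathfrak{e}}$ (which annihilate only same-type products $\mathfrak{e}_j \mathfrak{e}_k$ and $\mathfrak{e}_j^* \mathfrak{e}_k^*$) together with the identifications $1_\cA = 1_\cA^*$ and $1_F = 1_F^*$, none of which touch the mixed product $\mathfrak{e}_i \mathfrak{e}_i^*$. Hence $(XY)_{kj} \in \mathfrak{e}_k^* \cdot \hat{\cA} \cdot \mathfrak{e}_j$, as required.

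Associativity of matrix multiplication together with the distributivity relations $X(Y+Z) = XY + XZ$, $(X+Y)Z = XZ + YZ$ and the scalar-multiplication compatibility $(cX)Y = c(XY) = X(cY)$ then reduce entrywise to the corresponding identities in $\hat{\cA}$, exactly as in the standard proof that $\mathrm{Mat}_{n\times n}(R)$ is an associative algebra for any associative algebra $R$. The main (and essentially only) non-routine point is the closure statement above; once the absorption of the middle factor $\mathfrak{e}_i \mathfrak{e}_i^*$ into the central $\hat{\cA}$-slot is in hand, there is no further obstacle.
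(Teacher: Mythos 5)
Your proof is correct and is exactly the routine verification the paper omits (the lemma is prefaced by ``It is easy to check that''). The one point worth isolating, which you correctly identify, is the closure computation $X_{ki}Y_{ij}=\mathfrak{e}_k^*\,(\alpha_{ki}\,\mathfrak{e}_i\mathfrak{e}_i^*\,\beta_{ij})\,\mathfrak{e}_j$: the mixed product $\mathfrak{e}_i\mathfrak{e}_i^*$ is not among the relations defining $\hat{\cA}$ (only the same-type products $\mathfrak{e}_j\mathfrak{e}_k$ and their adjoints $\mathfrak{e}_k^*\mathfrak{e}_j^*$ are killed), so the bracketed factor lies in $\hat{\cA}$ and the entry lands in $\mathfrak{e}_k^*\cdot\hat{\cA}\cdot\mathfrak{e}_j$ as required.
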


This is essentially the algebra $\cL(\cA)$ defined in Equation \eqref{eq:LAm}, adapted to the current setting by identifying $\mathfrak{e} = (\mathfrak{e}_{j}:j=1,\ldots,n)$.
As explained previously right after Corollary \ref{cor:induce-fcn}, each element of $\mathrm{Mat}_F(\hat{\cA})$ induces a section of the trivial bundle $\End(F)$ over $[R/G]$, where $R$ is the space of framed representations of $\cA$.

Similar to \eqref{eq:Amtilde}, we take the $\C$-near-ring
$$ \widetilde{\cA} := \mathrm{Mat}_{F}(\hat{\cA})\{\varsigma_1,\ldots,\varsigma_N\}$$
where each $\varsigma_l$ represents a non-linear function $\sigma_l: F \to F$.  

As in Definition \ref{def:A-S}, we have a natural grading on $\widetilde{\cA}$.  Recall that the elements of $\widetilde{\cA}$ can be recorded by rooted trees.  The generation of rooted trees gives a grading on $\widetilde{\cA}$: $$\widetilde{\cA} = \bigoplus_k \widetilde{\cA}_k.$$  
$\widetilde{\cA}_0 = \mathrm{Mat}_{F}(\hat{\cA})$; $\widetilde{\cA}_p$ consists of linear combinations of $a \cdot \varsigma_j \circ \alpha$ for $a\in \mathrm{Mat}_{F}(\hat{\cA})$, $\alpha \in \widetilde{\cA}_{p-1}$, and $j=1,\ldots,N$.

In the last subsection,  we have explained a correspondence between $A$-modules and $A\{\varsigma_{1},\ldots,\varsigma_{N}\}$-modules, by choosing maps $\sigma_1,\ldots,\sigma_N \in \textrm{Map}(V,V)$.  However, such a correspondence does not descend to isomorphism classes.
The advantage of the construction here (after fixing a framing vector space $F$) is that the correspondence is well-defined on the moduli space.

\begin{prop} \label{prop:A-Atilde}
	Fix $\sigma^F_l \in \mathrm{Map}(F,F)$ for $l=1,\ldots,N$.
	A framed $\cA$-module $(V,F)$ with a Hermitian metric $h$ on $V$ induces an $\widetilde{\cA}$-module structure on $F$.  Moreover, if two such modules with metrics are isomorphic $(V,F,h) \cong (V',F',h')$, then the induced $\widetilde{\cA}$-module structures on $F$ are the same.	
	Thus, fixing an equivariant family of metrics on $V$, we have the map 
	$$[R(A)/G] \to R_F\left(\widetilde{\cA}\right)$$
	where $R_F(\widetilde{\cA})$ denotes the space of $\widetilde{\cA}$-module structures on $F$.
\end{prop}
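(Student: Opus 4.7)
The plan is to build the $\widetilde{\cA}$-module structure on $F$ recursively along the grading $\widetilde{\cA}=\bigoplus_p \widetilde{\cA}_p$ introduced above, and then check that the construction (i) defines a strong $\C$-near-ring morphism $\widetilde{\cA}\to\mathrm{Map}(F,F)$ and (ii) depends only on the isomorphism class of the framed module with its equivariant metric.

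For the degree-zero part $\widetilde{\cA}_0=\mathrm{Mat}_F(\hat{\cA})$, I would use Lemma \ref{lem:rep-Ahat} to turn the datum $(w,e,h)$ into a $\hat{\cA}$-module structure on $V\oplus\C$; then for each entry $\mathfrak{e}_k^*\cdot\alpha\cdot\mathfrak{e}_j$ of a matrix $M\in\mathrm{Mat}_F(\hat{\cA})$ the corresponding linear map $\C\to\C$ is the composition $e_k^{*_h}\circ\rho(\alpha)\circ e_j$, where $\rho(\alpha)$ is the action of $\alpha$ on $V$ from Lemma \ref{lem:rep-Ahat}. Assembling these entries produces an element $\rho_F(M)\in\End(F)\subset\mathrm{Map}(F,F)$, and $\rho_F$ is an algebra homomorphism since composition and addition are inherited from $\hat{\cA}$. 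For the inductive step, given that $\rho_F$ is already defined on $\widetilde{\cA}_p$, I extend to a generator $a\,\varsigma_l\circ\alpha\in\widetilde{\cA}_{p+1}$ by the formula
\begin{equation*}
\rho_F(a\,\varsigma_l\circ\alpha)\ :=\ \rho_F(a)\circ\sigma_l^F\circ\rho_F(\alpha)\ \in\ \mathrm{Map}(F,F),
\end{equation*}
extending linearly in $a$ and $\alpha$ separately. The defining relation $(a_1\varsigma_1+c\,a_2\varsigma_2)\circ\alpha=a_1\varsigma_1\circ\alpha+c\,a_2\varsigma_2\circ\alpha$ of Definition \ref{def:A-S} translates exactly into the identity $(\rho_F(a_1)+c\,\rho_F(a_2))\circ\sigma\circ\rho_F(\alpha)=\rho_F(a_1)\circ\sigma\circ\rho_F(\alpha)+c\,\rho_F(a_2)\circ\sigma\circ\rho_F(\alpha)$ in $\mathrm{Map}(F,F)$, which holds because $\rho_F(a_1),\rho_F(a_2)\in\End(F)$ act linearly on the left. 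Thus $\rho_F$ is a well-defined $\C$-near-ring morphism; because $\rho_F(\widetilde{\cA}_0)\subset\End(F)$, which is precisely the canonical subalgebra of $\mathrm{Map}(F,F)$, the morphism is moreover strong.

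For the second assertion, suppose $(V,F,w,e,h)$ and $(V',F,w',e',h')$ are isomorphic via $g\in\GL(V)\to\GL(V')$, so that $e'=g\cdot e$, $w'(a)=g\,w(a)\,g^{-1}$, and the equivariance \eqref{eq:h-equiv} holds for the family of metrics. On entries of the form $\mathfrak{e}_k^*\cdot\alpha\cdot\mathfrak{e}_j$ with $\alpha$ a word in $\cA$, $\cA^*$, $\mathfrak{e}_\bullet$, $\mathfrak{e}_\bullet^*$, the argument in the proof of Proposition \ref{prop:equiv} (rewriting $e_k^{*_h}=h_{(w,e)}(e_k,\,\cdot\,)$ and $w(a)^{*_h}=h^{-1}w(a)^*h$) shows that every $g$ and $g^{-1}$ cancel in the composition $e_k^{*_h}\circ\rho(\alpha)\circ e_j$, so that $\rho_F(M)$ is unchanged. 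Hence $\rho_F$ on $\widetilde{\cA}_0$ descends to $[R/G]$. Now induct on the degree: if $\rho_F$ on $\widetilde{\cA}_p$ is $\GL(V)$-invariant in the above sense, then the formula $\rho_F(a\,\varsigma_l\circ\alpha)=\rho_F(a)\circ\sigma_l^F\circ\rho_F(\alpha)$ is too, since $\sigma_l^F$ is fixed on $F$ and both $\rho_F(a),\rho_F(\alpha)$ are unchanged by the inductive hypothesis.

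The step I expect to be the subtlest is the first one: checking that the assignment $\rho_F$ respects the one-sided distributivity in Definition \ref{def:A-S} while simultaneously being strong, i.e.\ that the degree-zero image lands in $\End(F)$ rather than merely in $\mathrm{Map}(F,F)$. This is what forces the use of the doubled framing algebra $\hat{\cA}$ together with the matrix packaging $\mathrm{Mat}_F(\hat{\cA})$: each $\mathfrak{e}_k^*\cdot\hat{\cA}\cdot\mathfrak{e}_j$-entry acts as a number on $\C$, so assembling them produces a genuine $\C$-linear endomorphism of $F$, and the non-linearity is inserted only through the $\varsigma_l$. Once this bookkeeping is in place, the rest of the argument is a direct verification using \eqref{eq:equiv-sigma} (automatic here since $\sigma_l^F$ lives on $F$, not on $V$) and \eqref{eq:h-equiv}.
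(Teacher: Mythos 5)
Your proposal is correct and follows essentially the same route as the paper's (much terser) proof: the degree-zero part of $\widetilde{\cA}$ acts by $\GL(V)$-invariant linear endomorphisms of $F$ via the metric adjoints (Corollary \ref{cor:induce-fcn}), the $\sigma_l^F$ are fixed maps on $F$ untouched by $\GL(V)$, and the rest is induction on the grading. Your extra verifications (well-definedness against the one-sided distributivity relation and strongness of the morphism) are details the paper leaves implicit; just note that the phrase ``extending linearly in $\alpha$'' should be read as extending to the span of the generators $a\,\varsigma\circ\alpha$, since $\widetilde{\cA}_{p+1}$ imposes no linearity relation in the $\alpha$ slot.
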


\begin{proof}
	As explained below Corollary \ref{cor:induce-fcn}, by using the framed $\cA$-module structure and metric, each element in $\mathrm{Mat}_{F}(\hat{\cA})$ induces a linear endomorphism of $F$, which is invariant under $\GL(V)$.  Thus two isomorphic framed modules with metrics produce the same linear endomorphism of $F$.  Moreover, $\sigma^F_l$ are maps on $F$ which receive no action by $\GL(V)$.  As a result, this gives an $\tilde{A}$-module structure on $F$ which remains the same for isomorphic $(V,F,h)$.  
\end{proof}

The above proposition explains why we want Definition \ref{def:cm} for an activation module.

\begin{remark}
	In Definition \ref{def:cm}, we have a splitting $F=F_{\mathrm{m}}\oplus F_{\mathrm{in}} \oplus F_{\mathrm{out}}$.  It is easy to restrict to the component $F_{\mathrm{m}}$ (or other components).  We have the projection $p: F\to F_{\mathrm{m}}$ and inclusion $\iota: F_{\mathrm{m}} \to F$.  The functions $\sigma_j^F: F_{\mathrm{m}} \to F_{\mathrm{m}}$ can also be understood as functions on $F$.  From now on, we will simply work with the whole framing vector space $F$, keeping in mind that we can restrict to the components if we want.
\end{remark}

We are going to define differential forms on $\widetilde{\cA}$.  Under the setting of Definition \ref{def:cm}, they will induce $\mathrm{Map}\left(F ,F\right)$-valued forms on $[R/G]$ (Theorem \ref{thm:Atildeform}).  

First, recall that we have the Karoubi-de Rham complex $DR^\bullet(\hat{\cA}/B)$.  It contains the subspace of forms over loops at the framing vertex.  These forms are linear combinations of elements $\mathfrak{e}_{k}^*\ldots \mathfrak{e}_{j}, (d\mathfrak{e}_{k}^*)\ldots \mathfrak{e}_{j}, \mathfrak{e}_{k}^*\ldots (d\mathfrak{e}_{j}), (d\mathfrak{e}_{k}^*)\ldots (d\mathfrak{e}_{j})$ for some $j,k=1,\ldots,n$.  In other words, the subspace is $\sum_{j,k=1}^{n} DR^\bullet(\hat{\cA}/B)_{j,k}$, where $DR^\bullet(\hat{\cA}/B)_{j,k}$ is defined as
$$ \mathfrak{e}_{k}^* \cdot DR^\bullet(\hat{\cA}/B) \cdot \mathfrak{e}_{j} + d\mathfrak{e}_{k}^* \cdot DR^\bullet(\hat{\cA}/B) \cdot \mathfrak{e}_{j} + \mathfrak{e}_{k}^* \cdot DR^\bullet(\hat{\cA}/B) \cdot d\mathfrak{e}_{j} + d\mathfrak{e}_{k}^* \cdot DR^\bullet(\hat{\cA}/B) \cdot d\mathfrak{e}_{j}.$$

We define the linear part as follows.
\begin{defn}
	$DR^\bullet(\mathrm{Mat}_{F}(\hat{\cA}))$ is defined to be the space of $n\times n$ matrices whose $(k,j)$-th entries lie in $DR^\bullet(\hat{\cA}/B)_{j,k}$.  
\end{defn} 
Like $DR^\bullet(\hat{\cA}/B)$, this space is graded by the degree of forms.  

From Proposition \ref{prop:lin-ind-form}, we have the map
\begin{equation}
DR^\bullet(\mathrm{Mat}_{F}(\hat{\cA})) \to (\Omega^\bullet(R,\End\left(F\right)))^{G}.
\label{eq:lin-ind-form}
\end{equation}
($F$, and hence $\End(F)$, are treated as a trivial bundle over $[R/G]$.)

To define differential forms on $\widetilde{\cA}$,  
we need to use the symbols $\left. D^{(p)} \varsigma_l \right|_{\alpha}(a_1,\ldots,a_p)$, which represent the $p$-th order \emph{symmetric} differentials of the non-linear functions $\sigma_l$.  For instance, $D^{(1)} \varsigma_l$ represents the usual differential $d \sigma_l$; $D^{(2)} \varsigma_l$ represents the Hessian of $\sigma_l$, which is a symmetric bilinear two-form.  $D^{(p)} \varsigma_l$ is supersymmetric about its $p$ inputs:
\begin{equation}
\left. D^{(p)} \varsigma_l \right|_{\alpha}(a_1,\ldots,a_k,a_{k+1},\ldots,a_p) = (-1)^{\deg a_k \cdot \deg a_{k+1}}\left. D^{(p)} \varsigma_l \right|_{\alpha}(a_1,\ldots,a_{k+1}, a_k, \ldots, a_p)
\label{eq:Dss}
\end{equation}

where $\deg a$ denotes the degree of $a$.
The inputs $a_i$ are again differential forms on $\widetilde{\cA}$.  The point of evaluation $\alpha$ is an element of $\widetilde{\cA}$.

\begin{defn}
	A form-valued tree is a rooted tree (oriented towards the root) whose edges are labeled by $\phi \in DR^\bullet(\mathrm{Mat}_{F}(\hat{\cA}))$; leaves are labeled by $\alpha\in \widetilde{\cA}$; the root (if not being a leaf) is labeled by $1$; nodes which are neither leaves nor the root are labeled by $\left. D^{(p)} \varsigma_l \right|_{\alpha}$ for some $l=1,\ldots,N$, $\alpha \in \widetilde{\cA}$, and $p>0$ is the number of incoming edges.  
\end{defn}

The trivial rooted tree, which has a single node with no edge, corresponds to zero-form.  The node is attached with an element $\alpha \in \widetilde{\cA}$.

For a non-trivial rooted tree, the output of each node which are neither leaves nor the root is
$$  \left. D^{(p)} \varsigma_l \right|_{\alpha} (\phi_1\cdot \eta_1,\ldots,\phi_p\cdot \eta_p) $$
where $\phi_k\in DR^\bullet(\mathrm{Mat}_{F}(\hat{\cA}))$ are attached to the incoming edges, and $\eta_k$ are the outputs of the nodes adjacent to the incoming edges.  The input edges to the node are read clockwisely.  Its degree is defined as the sum of $\deg (\phi_k \cdot \eta_k) = \deg \phi_k + \deg \eta_k$.  The output of each leaf is simply its label $\alpha\in \widetilde{\cA}$ which has degree 0.  The output of the root, which is the sum of $\phi_k\cdot \eta_k$ for the incoming edges $\phi_k$ and outputs of incoming nodes $\eta_k$, is taken to be the differential form associated to the form-valued tree.

\begin{remark}
	Now we have introduced two different kinds of rooted trees.  The activation tree represents an element in $\widetilde{\cA}$ (which is identified as a zero-form); the form-valued tree represents a $p$-form.  For $p=0$, the form-valued tree is trivial consisting of a single root, which is labeled by $\alpha \in \widetilde{\cA}$.  $\alpha$ is represented by an activation tree, which is more useful in this situation.
\end{remark}

\begin{defn}
	A differential zero-form over $\widetilde{\cA}$ is simply an element in $\widetilde{\cA}$.  Denote
	$$DR^0(\widetilde{\cA}) := \widetilde{\cA}.$$
	
	A differential $p$-form (for $p \geq 1$) is a sum of forms associated to form-valued trees with at most $p$ leaves, with total of degrees of forms attached to edges being $p$.  The space of $p$-forms is denoted by  $DR^p(\widetilde{\cA})$.
\end{defn}

\begin{remark}
	Since we require the trees contributing to a $p$-form to have at most $p$ leaves, $D^{(k)} \varsigma_l$ that appear at the nodes must have $k \leq p$.
\end{remark}

\begin{example}
	Figure \ref{fig:form-tree} shows examples of one-form and two-form.  The corresponding expressions are $a_1 da_2 \cdot D^{(1)}\left.\varsigma_l\right|_{\alpha_1} (a_3 \cdot \alpha_2)$, $a_1 da_2 \cdot D^{(1)}\left.\varsigma_l\right|_{\alpha_1} ((a_3 da_4)\cdot \alpha_2)$ and $a_1 da_2 \cdot D^{(2)}\left.\varsigma_l\right|_{\alpha_1}(a_3 \cdot \alpha_2, (a_4 da_5) \cdot \alpha_3)$ respectively.
\end{example}

\begin{figure}[h]
	\centering
	\includegraphics[scale=0.7]{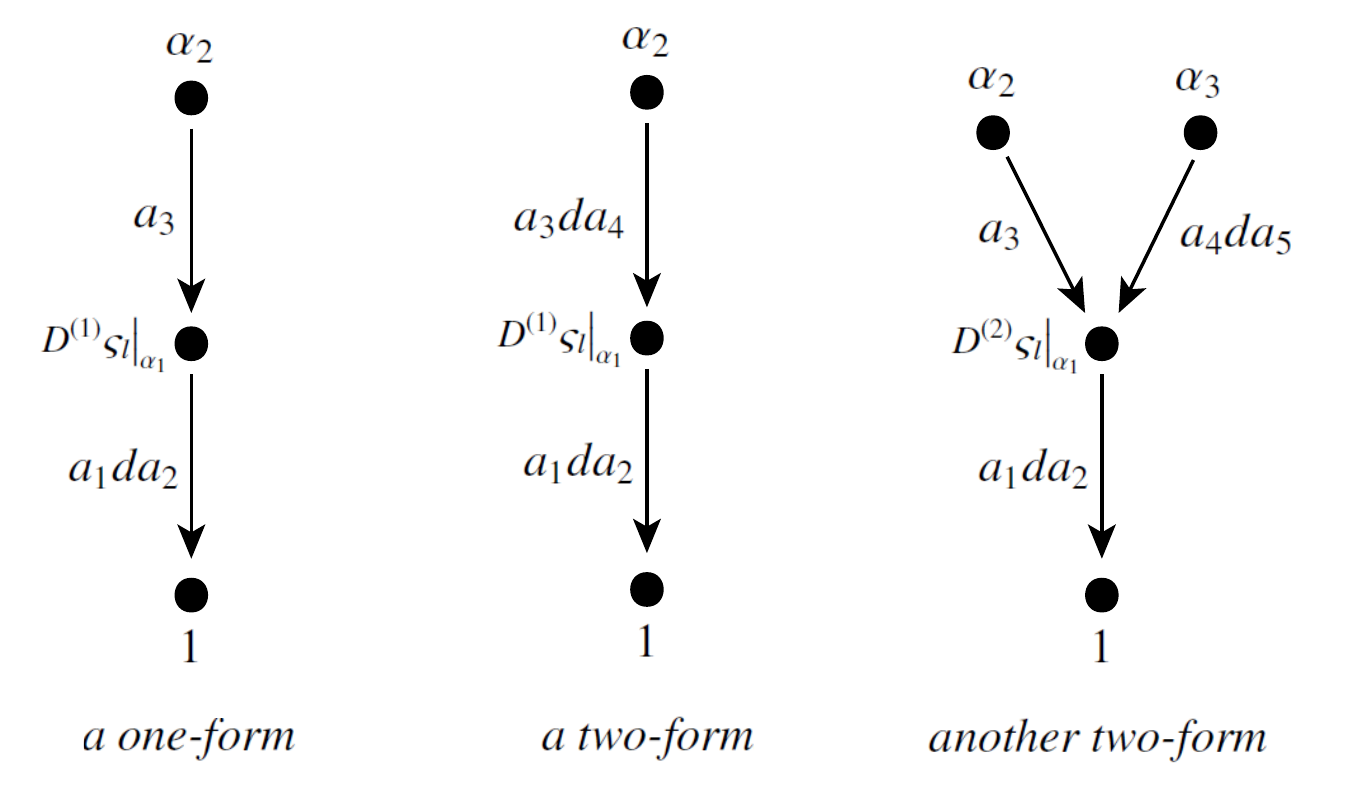}
	\caption{}
	\label{fig:form-tree}
\end{figure}

\begin{defn} \label{def:diff}
	The differential of a form over $\widetilde{\cA}$ is defined as follows.  
	
	A zero-form in the $0$-th graded piece $\alpha \in \widetilde{\cA}_0$ is simply an element in $\mathrm{Mat}_{F}(\hat{\cA})$, and its differential is given by the entriwise differential in $DR^\bullet(\hat{\cA}/B)$.
	A zero-form in the $p$-th graded piece $\alpha \in \widetilde{\cA}_p$ can be written as 
	$$ \alpha = a_0 + \sum_{k=1}^m a_k \circ \varsigma_{l(k)} \circ \alpha_k \in DR^0(\widetilde{\cA}) $$  
	where $a_k \in \mathrm{Mat}_{F}(\hat{\cA})$ for $k=0,\ldots,m$, $\alpha_k \in \widetilde{\cA}_{p-1}$, and $l(k)=1,\ldots,N$.
	Then
	$$d\alpha := da_0 + \sum_k da_k  \cdot (\varsigma_{l(k)}\circ \alpha_k)+ \sum_k a_k \cdot \left.D^{(1)}\varsigma_{l(k)}\right|_{\alpha_k} (d \alpha_k) \in DR^{1}(\widetilde{\cA})$$
	where $d \alpha_k$ has already been defined by the inductive assumption since $\alpha_k \in \widetilde{\cA}_{p-1}$.
	
	For $p$-forms with $p>0$,
	it suffices to define differential of a $p$-form attached to a form-valued tree.  For a leaf, the output is simply its label $\alpha \in \widetilde{\cA}$, whose differential has been defined above.  For a node which is neither a leaf nor the root, its output is of the form $\left. D^{(p)} \varsigma_l \right|_{\alpha} (\phi_1\cdot \eta_1,\ldots,\phi_p\cdot \eta_p)$, where $\phi_k\in DR^\bullet(\mathrm{Mat}_{F}(\hat{\cA}))$ are attached to the incoming edges, and $\eta_k$ are the outputs of the nodes adjacent to the incoming edges.  Its differential is defined as
	\begin{align*}
	&d\left(\left. D^{(p)} \varsigma_l \right|_{\alpha} (\phi_1\cdot \eta_1,\ldots,\phi_p\cdot \eta_p)\right) \\
	:=& \left.D^{(p+1)}\varsigma_l\right|_{\alpha}(d\alpha, \phi_1\cdot \eta_1,\ldots,\phi_p\cdot \eta_p) \\
	&+ \sum_{k=1}^p (-1)^{\deg (\phi_1\eta_1)+\ldots+\deg (\phi_{k-1}\eta_{k-1})}  \left. D^{(p)} \varsigma_l \right|_{\alpha}(\phi_1\cdot \eta_1,\ldots, (d\phi_k)\cdot \eta_k \\
	&+ (-1)^{\deg \phi_k} \phi_k \cdot d\eta_k ,\ldots,\phi_p\cdot \eta_p)
	\end{align*}
	where the differential $d\eta_k$ is already known by induction assumption on the generation of the tree.  The $p$-form attached to the tree is the output of the root, which is of the form $\sum_k \phi_k \cdot \eta_k$.  Its differential is defined as $\sum_k \left(d\phi_k \cdot \eta_k + (-1)^{\deg \phi_k} \phi_k \cdot d\eta_k \right)$, where $d\eta_k$ has been defined by inductive assumption.
\end{defn}

The differential of a zero-form has a nice expression in terms of a sum over sub-trees of the activation tree as follows.

\begin{prop}
	Consider $\alpha \in \widetilde{\cA}$ represented by an activation tree $T$.  Then $d\alpha \in DR^1(\widetilde{\cA})$ is a sum over all the nodes of $T$, and the terms are given as follows.  For each node, there is a unique path $\gamma_1\ldots \gamma_r$ in $T$ connecting from that node to the root, where $\gamma_k$ denotes the (oriented) edges.  (When the node is the root, the path is trivial and the corresponding term is simply $0$.)  The corresponding term equals to
	\begin{equation}
	a_{\gamma_1} \left. D^{(1)}\varsigma_{l(t(\gamma_1))}\right|_{\alpha_{t(\gamma_1)}} \ldots 
	a_{\gamma_{r-1}} \left. D^{(1)}\varsigma_{l(t(\gamma_{r-1}))}\right|_{\alpha_{t(\gamma_{r-1})}}\, da_{\gamma_r}\cdot  (\varsigma_{l(t(\gamma_r))}\circ \alpha_{t(\gamma_r)})
	\label{eq:dalpha}
	\end{equation}
	where $\alpha_i$ for a node $i$ of $T$ denotes the output at the node $i$.
\end{prop}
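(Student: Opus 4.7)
The plan is induction on the height of the activation tree $T$ representing $\alpha$. The base case is when $T$ consists of just the root: then $\alpha = 1_{\widetilde{\cA}}$, $d\alpha = 0$, and the proposition's sum collapses to the root's trivial contribution $0$. A slightly less trivial warm-up is $\alpha \in \widetilde{\cA}_0 = \mathrm{Mat}_F(\hat{\cA})$, represented by a root, one edge labelled $a_0$, and one leaf; here Definition \ref{def:diff} gives $d\alpha = da_0$, and the proposition's formula assigns to the leaf the factor $da_{\gamma_1}\cdot(\varsigma_{l(t(\gamma_1))}\circ\alpha_{t(\gamma_1)}) = da_0\cdot(1\circ 1) = da_0$.

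For the inductive step, let $v_1,\ldots,v_m$ denote the children of the root in $T$, write $a_k$ for the label on the edge $\gamma^{(k)}$ from the root to $v_k$, $\varsigma_{l(v_k)}$ for the label at $v_k$ (with the convention $\varsigma_{l(v_k)} = 1_{\widetilde{\cA}}$ and $\alpha_{v_k} = 1_{\widetilde{\cA}}$ when $v_k$ is a leaf), and let $T_{v_k}$ be the subtree rooted at $v_k$. Then $\alpha = \sum_{k=1}^{m} a_k\cdot\varsigma_{l(v_k)}\circ\alpha_{v_k}$, and Definition \ref{def:diff} yields
\begin{equation*}
d\alpha \;=\; \sum_{k} da_k\cdot(\varsigma_{l(v_k)}\circ\alpha_{v_k}) \;+\; \sum_{k} a_k\cdot\left.D^{(1)}\varsigma_{l(v_k)}\right|_{\alpha_{v_k}}\!(d\alpha_{v_k}).
\end{equation*}
The first sum matches exactly the proposition's contributions attached to the nodes $v_k$, for which the path to the root has length $r=1$ and the formula collapses to $da_{\gamma^{(k)}}\cdot(\varsigma_{l(v_k)}\circ\alpha_{v_k})$.

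For the remaining sum I would invoke the inductive hypothesis on each $\alpha_{v_k}$, which has strictly smaller tree-height: it expresses $d\alpha_{v_k}$ as a sum over the nodes $u$ of $T_{v_k}$. Each such $u$, viewed as a node of $T$, has its path to the root of $T$ obtained by prepending $\gamma^{(k)}$ to its path to the root of $T_{v_k}$. By linearity of $D^{(1)}\varsigma$ in its argument, multiplying the inductive term on the left by $a_k\cdot D^{(1)}\varsigma_{l(v_k)}|_{\alpha_{v_k}}$ implements precisely this prepending in the flat product \eqref{eq:dalpha}: the new leading factor is $a_{\gamma_1}\cdot D^{(1)}\varsigma_{l(t(\gamma_1))}|_{\alpha_{t(\gamma_1)}}$ with $\gamma_1 = \gamma^{(k)}$. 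Summing over $k$ and adding the root's trivial contribution reassembles the statement.

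The main subtlety is the bookkeeping of the noncommutative order: the flat product \eqref{eq:dalpha} is read from the root down to the source node, matching the chain-rule order for Jacobians, so at every inductive step the new factor $a_k\cdot D^{(1)}\varsigma_{l(v_k)}|_{\alpha_{v_k}}$ must be prepended on the \emph{left}, and each nested application $D^{(1)}\varsigma|_{\bullet}(\,\cdot\,)$ must be reinterpreted as left multiplication by $D^{(1)}\varsigma|_{\bullet}$ when flattening the expression. A minor secondary check is that a leaf source node $u$ produces the trailing factor $\varsigma_{l(u)}\circ\alpha_u = 1\circ 1 = 1_{\widetilde{\cA}}$, which is automatic from the activation-tree conventions already used in the base case.
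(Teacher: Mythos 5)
Your proposal is correct and follows essentially the same route as the paper: induction on the generation (height) of the activation tree, using the recursive formula for $d$ from Definition \ref{def:diff}, identifying the terms $da_k\cdot(\varsigma_{l(k)}\circ\alpha_k)$ with the depth-one nodes, and observing that left-multiplying the inductive terms by $a_k\, D^{(1)}\varsigma_{l(k)}\big|_{\alpha_k}$ implements prepending the edge $\gamma^{(k)}$ to each path. Your write-up is somewhat more explicit than the paper's about the noncommutative ordering and the leaf conventions, but the argument is the same.
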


\begin{proof}
	The statement easily holds for the zeroth generation: the tree only has the root and leaves as nodes, and the zeroth form has an expression $\sum_i a_i$ for $a_i \in DR^\bullet(\mathrm{Mat}_{F}(\hat{\cA}))$, whose differential is simply $\sum_i d a_i$, which is a sum over the leaves.
	
	Suppose the statement holds for all elements in the $p$-th generation.  For $\alpha = a_0 + \sum_{k=1}^m a_k \circ \varsigma_{l(k)} \circ \alpha_k$ in the $(p+1)$-th generation, $d\alpha = da_0 + \sum_k da_k  \cdot (\varsigma_{l(k)}\circ \alpha_k)+ \sum_k a_k \cdot \left.D^{(1)}\varsigma_{l(k)}\right|_{\alpha_k} (d \alpha_k) \in DR^{1}(\widetilde{\cA})$, where $d\alpha_k$ is a sum over the nodes of the activation tree of $\alpha_k$ as given in Equation \eqref{eq:dalpha}.  The first term $da_0$ and second term $da_k  \cdot (\varsigma_{l(k)}\circ \alpha_k)$  correspond to the tail nodes of the edges of $a_k$ for $k=0,\ldots,m$.  Thus $d \alpha$ is a sum over all the nodes with the summands given by \eqref{eq:dalpha}.
\end{proof}

\begin{example}
	Consider the $0$-form
	$$ \alpha = a_0 + a_1 \varsigma_1 \circ (a_{1,0} + a_{1,1}\varsigma_{1,1}\circ a_{1,1,0}).$$
	Its differential equals to
	\begin{align*}
	d \alpha =& da_0 + da_1 \cdot \alpha_1
	&+a_1 \left. D^{(1)}\varsigma_1\right|_{a_{1,0} + a_{1,1}\varsigma_{1,1}\circ a_{1,1,0}} (da_{1,0} + da_{1,1} \cdot \alpha_{1,1} + a_{1,1} \left.D^{(1)}\varsigma_{1,1}\right|_{a_{1,1,0}} da_{1,1,0})
	\end{align*}
	where $\alpha_1 = \varsigma_1 \circ (a_{1,0} + a_{1,1}\varsigma_{1,1}\circ a_{1,1,0})$ and $\alpha_{1,1} = \varsigma_{1,1}\circ a_{1,1,0}$.
	It equals to the sum over the nodes of the activation tree of $\alpha$ as shown in Figure \ref{fig:differential}.
\end{example}

\begin{figure}[h]
	\centering
	\includegraphics[scale=0.35]{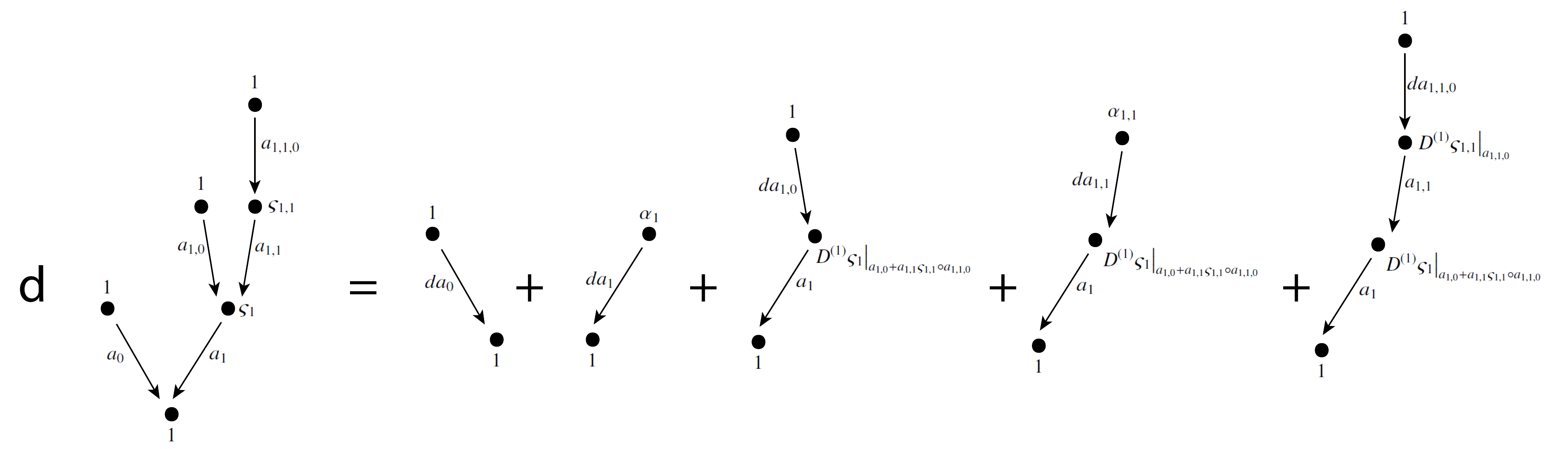}
	\caption{}
	\label{fig:differential}
\end{figure}

\begin{remark}
	The output at a node of an activation tree representing $\alpha \in \widetilde{\cA}$ can be computed by the algorithm called \emph{forward propagation}.  Namely, the previous results $\alpha_k$ (pre-activation values) have been stored in memory, and the current output is computed as $\sum_k a_k \cdot \varsigma_{l(k)} \circ \alpha_k$ (where $\varsigma_{l(k)}$ are the activation functions at previous nodes and $a_k$ are labeling the incoming edges) and stored to memory for later steps.
	
	For the differential $d\alpha$, the computation \eqref{eq:dalpha} uses the stored outputs $\alpha_i$ in the forward propagation.  Moreover, the expression $a_{\gamma_1} \left. D^{(1)}\varsigma_{l(t(\gamma_1))}\right|_{\alpha_{t(\gamma_1)}} \ldots 
	a_{\gamma_{r-1}} \left. D^{(1)}\varsigma_{l(t(\gamma_{r-1}))}\right|_{\alpha_{t(\gamma_{r-1})}}$ appears in every term of $d\alpha$ corresponding to a path in $T$ that contains $\gamma_{r-1}\ldots \gamma_1$.  Thus it is good to start with the root to compute and store the values of $a_{\gamma_1} \left. D^{(1)}\varsigma_{l(t(\gamma_1))}\right|_{\alpha_{t(\gamma_1)}} \ldots$ 
	$a_{\gamma_{r-1}} \left. D^{(1)}\varsigma_{l(t(\gamma_{r-1}))}\right|_{\alpha_{t(\gamma_{r-1})}}$, and move backward with respect to the orientation of the tree $T$.  This is well known as the \emph{backward propagation algorithm}.
\end{remark}

\begin{prop}
	$d^2 = 0$.
\end{prop}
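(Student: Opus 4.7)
The plan is to prove $d^2=0$ by a two-level induction: first on the generation grading $\widetilde{\cA}=\bigoplus_p \widetilde{\cA}_p$ for zero-forms, and then on the depth of the form-valued tree for higher forms. The three pillars of the argument are (i) the classical fact $d^2=0$ on $DR^\bullet(\hat{\cA}/B)$ (from Cuntz--Quillen), which transports to $DR^\bullet(\mathrm{Mat}_F(\hat{\cA}))$ entrywise; (ii) the graded symmetry \eqref{eq:Dss} of $D^{(p)}\varsigma_l$, which forces $D^{(q)}\varsigma_l|_\alpha(\ldots,d\beta,\ldots,d\beta,\ldots)=0$ whenever a $1$-form appears twice; and (iii) the graded Leibniz rule encoded in Definition~\ref{def:diff}.

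For the base case, if $\alpha\in \widetilde{\cA}_0=\mathrm{Mat}_F(\hat{\cA})$ then $d\alpha$ is the entrywise differential in $DR^\bullet(\hat{\cA}/B)$, and $d^2\alpha=0$ is inherited from Cuntz--Quillen. For the inductive step, write $\alpha=a_0+\sum_k a_k\circ\varsigma_{l(k)}\circ\alpha_k$ with $\alpha_k\in \widetilde{\cA}_{p-1}$. Then
\begin{equation*}
d\alpha=da_0+\sum_k da_k\cdot(\varsigma_{l(k)}\circ\alpha_k)+\sum_k a_k\cdot \left.D^{(1)}\varsigma_{l(k)}\right|_{\alpha_k}(d\alpha_k).
\end{equation*}
Applying $d$ again and using the Leibniz rule (with $\deg da_k=1$, $\deg a_k=0$), the cross terms $\pm da_k\cdot D^{(1)}\varsigma_{l(k)}|_{\alpha_k}(d\alpha_k)$ cancel in pairs. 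What remains is $\sum_k a_k\cdot d\bigl(D^{(1)}\varsigma_{l(k)}|_{\alpha_k}(d\alpha_k)\bigr)$, which by Definition~\ref{def:diff} unpacks to
\begin{equation*}
\sum_k a_k\cdot\Bigl( \left.D^{(2)}\varsigma_{l(k)}\right|_{\alpha_k}(d\alpha_k,d\alpha_k) - \left.D^{(1)}\varsigma_{l(k)}\right|_{\alpha_k}(d^{2}\alpha_k)\Bigr).
\end{equation*}
The first summand vanishes by \eqref{eq:Dss} (two odd-degree inputs), and the second vanishes by the inductive hypothesis for $\alpha_k\in\widetilde{\cA}_{p-1}$.

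For higher-degree forms, I would induct on the depth of the form-valued tree, with the zero-form case above serving as the base at the leaves. At a node with output $\left.D^{(q)}\varsigma_l\right|_\alpha(\phi_1\cdot\eta_1,\ldots,\phi_q\cdot\eta_q)$, Definition~\ref{def:diff} produces (a) a ``basepoint'' term $\left.D^{(q+1)}\varsigma_l\right|_\alpha(d\alpha,\phi_1\eta_1,\ldots)$, and (b) a sum of ``input'' terms with the internal Leibniz rule $(d\phi_k)\eta_k+(-1)^{\deg\phi_k}\phi_k(d\eta_k)$. Applying $d$ once more and expanding, one gets three classes of terms: terms with $D^{(q+2)}\varsigma_l|_\alpha(d\alpha,d\alpha,\ldots)$ and terms with $D^{(q+1)}\varsigma_l|_\alpha(\ldots,d\phi_k\text{ or }d\eta_k\text{ repeated},\ldots)$ or with $d\alpha$ matched to an odd $\phi_k\eta_k$ of equal degree --- all killed by \eqref{eq:Dss}; terms with $d^2$ applied to a leaf or edge-label --- killed by the inductive hypothesis together with Step~(i); and Leibniz cross-terms of the form $\pm(d\phi_k)(d\eta_k)\pm(d\phi_k)(d\eta_k)$ which cancel by the sign conventions. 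Finally, $d(\sum_k \phi_k\cdot\eta_k)=\sum_k(d\phi_k\cdot\eta_k+(-1)^{\deg\phi_k}\phi_k\cdot d\eta_k)$ at the root satisfies $d^2=0$ by the standard graded Leibniz argument together with the vanishing already established at each node.

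The main obstacle is the sign bookkeeping in the last step: the recursive rule for $d$ at an internal node applies a graded Leibniz rule \emph{inside} each argument slot of $D^{(q)}\varsigma_l$, while simultaneously a second graded Leibniz rule governs how the outputs $\phi_k\cdot\eta_k$ compose along edges. Ensuring that every Leibniz cross-term cancels against its partner, and that the supersymmetry \eqref{eq:Dss} eliminates every repeated-input contribution with the correct sign, is the only place where genuine care is required; once the signs are verified, the structural vanishing follows.
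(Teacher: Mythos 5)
Your proof is correct and follows essentially the same route as the paper: a base case on $\widetilde{\cA}_0=\mathrm{Mat}_F(\hat{\cA})$ inherited from the Cuntz--Quillen theory, cancellation of the Leibniz cross-terms $\pm\, da_k\cdot D^{(1)}\varsigma_{l(k)}|_{\alpha_k}(d\alpha_k)$, vanishing of $D^{(2)}\varsigma_{l(k)}|_{\alpha_k}(d\alpha_k,d\alpha_k)$ by the supersymmetry \eqref{eq:Dss}, and induction on tree generation for higher forms. If anything, you are slightly more explicit than the paper in tracking the $D^{(1)}\varsigma_{l(k)}|_{\alpha_k}(d^2\alpha_k)$ term, which the paper suppresses via the inductive hypothesis.
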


\begin{proof}
	First consider a zero-form, that is, $\alpha \in \widetilde{\cA}$.  $\alpha$ is represented by an activation tree.  Recall that $d^2 a = 0$ for $a \in A = \mathrm{Mat}_{F}(\hat{\cA})$ is already known for differential of forms on an algebra $A$.
	
	We can write 
	$$ \alpha = a_0 + \sum_{k=1}^m a_k \circ \varsigma_{l(k)} \circ \alpha_k \in DR^0(\widetilde{\cA}) $$  
	where $a_k \in \mathrm{Mat}_{F}(\hat{\cA})$ for $k=0,\ldots,m$, $\alpha_k \in \widetilde{\cA}$ has one less generation than $\alpha$, and $l(k)=1,\ldots,N$.
	Then
	\begin{align*}
	d^2\alpha =& d\left(da_0 + da_k  \cdot (\varsigma_{l(k)}\circ \alpha_k)+ a_k \cdot \left.D^{(1)}\varsigma_{l(k)}\right|_{\alpha_k} (d \alpha_k)\right)\\ 
	=& -da_k  \cdot d(\varsigma_{l(k)}\circ \alpha_k) + da_k \cdot \left.D^{(1)}\varsigma_{l(k)}\right|_{\alpha_k} (d \alpha_k) + a_k \cdot \left.D^{(2)}\varsigma_{l(k)}\right|_{\alpha_k} (d \alpha_k, d \alpha_k).
	\end{align*}
	The first two terms cancel since $d(\varsigma_{l(k)}\circ \alpha_k)=\left.D^{(1)}\varsigma_{l(k)}\right|_{\alpha_k} (d \alpha_k)$.  The third term vanishes since $D^{(2)}\varsigma_{l(k)}$ is supersymmetric about its input (Equation \eqref{eq:Dss}).
	
	For a general $p$-form,
	it suffices to prove $d\psi=0$ for $\psi$ represented by a form-valued tree.  We will do induction on the generation of the tree.  We already know the statement when the tree is trivial (which is the case of a zero-form).   The $p$-form $\psi$ is given as
	$\psi = \sum_k \phi_k \cdot \eta_k$ for some $\phi_k \in DR^\bullet(\mathrm{Mat}_{F}(\hat{\cA}))$ and $\eta_k$ has a smaller generation than $\psi$.  Then
	$$ d^2\psi = \sum_k \left((-1)^{\deg \phi_k}d\phi_k \cdot d \eta_k +  (-1)^{\deg \phi_k + 1}d\phi_k \cdot d \eta_k + (-1)^{2 \deg \phi_k}\phi_k \cdot d^2 \eta_k \right). $$
	The first two terms cancel.  The last term vanishes by inductive assumption. 
\end{proof}

Finally, we show that differential forms on the near-ring $\widetilde{\cA}$ induce $G$-invariant $\mathrm{Map}\left(F ,F\right)$-valued differential forms over the space of framed $\cA$-modules $R$.  

\begin{theorem} \label{thm:Atildeform}
	There exists a degree-preserving map $$DR^\bullet(\widetilde{\cA}) \to  (\Omega^\bullet(R,\mathbf{Map}\left(F ,F\right)))^{G}$$
	which commutes with $d$ on the two sides, and equals to the map \eqref{eq:lin-ind-form}: $DR^\bullet(\mathrm{Mat}_{F}(\hat{\cA})) \to (\Omega^\bullet(R,\End\left(F\right)))^{G}$ when restricted to $DR^\bullet(\mathrm{Mat}_{F}(\hat{\cA}))$.
	Here, $\mathbf{Map}\left(F ,F\right)$ denotes the trivial bundle $\mathrm{Map}\left(F ,F\right) \times R$, and the action of $G=\GL(V)$ on fiber direction is trivial.
\end{theorem}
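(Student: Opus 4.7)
The plan is to build the map recursively on the form-valued tree structure and reduce all required properties to the linear case (Proposition \ref{prop:lin-ind-form}), the $\widetilde{\cA}$-module construction (Proposition \ref{prop:A-Atilde}), and the classical chain rule for smooth maps $F\to F$. Concretely, for a form-valued tree $T$ representing $\psi\in DR^p(\widetilde{\cA})$, I attach to each edge $\phi\in DR^\bullet(\mathrm{Mat}_F(\hat{\cA}))$ the $G$-invariant $\End(F)$-valued form on $R$ given by \eqref{eq:lin-ind-form}; to each leaf labeled by $\alpha\in\widetilde{\cA}$ the $G$-invariant $\mathrm{Map}(F,F)$-valued function on $R$ supplied by Proposition \ref{prop:A-Atilde}; and to each interior node labeled $\left.D^{(p)}\varsigma_l\right|_\alpha$ the actual symmetric $p$-linear derivative $\left.D^{(p)}\sigma_l\right|_{\alpha_{[w,e]}(v)}$ of the prescribed non-linear function $\sigma_l\colon F\to F$, evaluated at the point $\alpha_{[w,e]}(v)\in F$ coming from the tree rooted at that node. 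Traversing the tree from leaves to root and composing/contracting these ingredients produces a $\mathrm{Map}(F,F)$-valued $p$-form on $R$, which by construction restricts to the map \eqref{eq:lin-ind-form} when the tree is trivial (only edge-labels of positive degree and a single root).

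Second, I would verify that this assignment is well-defined on $DR^\bullet(\widetilde{\cA})$ rather than only on trees. The defining relation of $A\{S\}$ in Definition \ref{def:A-S} is linearity on the left of a node, which matches linearity of $D^{(p)}\sigma_l$ in each slot; the supersymmetry \eqref{eq:Dss} of the formal derivatives matches the graded symmetry of genuine symmetric derivatives of a smooth map on $F$; and the expected Leibniz/associativity of $DR^\bullet(\mathrm{Mat}_F(\hat{\cA}))$ against the $\End(F)$-valued forms on $R$ gives consistency under rebracketing. The $G$-invariance is immediate: edge contributions are $G$-invariant by Proposition \ref{prop:lin-ind-form}, leaf contributions are $G$-invariant by Proposition \ref{prop:A-Atilde}, and the interior-node contributions involve only the symbols $\sigma_l$ and their derivatives, which act purely on the framing space $F$ on which $G=\GL(V)$ acts trivially.

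The main obstacle, and the only substantial work, is commutation with the differential. I would prove this by induction on the generation of the tree. The base case is the statement that the image of $d$ on $DR^\bullet(\mathrm{Mat}_F(\hat{\cA}))$ coincides with the de Rham differential of the associated $\End(F)$-valued forms on $R$, which is already contained in Proposition \ref{prop:lin-ind-form}. For the inductive step, I match term-by-term the algebraic formulas in Definition \ref{def:diff} with the classical differentiation rules after applying the map: the identity $d(\sigma_l\circ\alpha)=D\sigma_l|_\alpha\cdot d\alpha$ is exactly the chain rule applied to the $F$-valued function $v\mapsto \sigma_l(\alpha_{[w,e]}(v))$; and the formula for $d\!\left(\left.D^{(p)}\varsigma_l\right|_\alpha(\phi_1\eta_1,\ldots,\phi_p\eta_p)\right)$ reproduces the de Rham differential of the corresponding multilinear expression on $R$, where the term $\left.D^{(p+1)}\sigma_l\right|_\alpha(d\alpha,\ldots)$ comes from differentiating the base point and the remaining terms come from the graded Leibniz rule applied to each slot. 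The graded signs in Definition \ref{def:diff} are exactly those dictated by the grading on $\Omega^\bullet(R,\mathrm{Map}(F,F))$, and the supersymmetry \eqref{eq:Dss} guarantees that reordering of tangent arguments produces the same signs on both sides.

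Finally, once commutation with $d$ is established on every tree, linear extension yields the required degree-preserving chain map $DR^\bullet(\widetilde{\cA})\to (\Omega^\bullet(R,\mathbf{Map}(F,F)))^G$; the restriction to $DR^\bullet(\mathrm{Mat}_F(\hat{\cA}))$ agrees with \eqref{eq:lin-ind-form} by the base case of the construction. The main subtlety I expect is bookkeeping the graded signs and making the inductive step clean; after that, everything reduces to the classical chain and Leibniz rules applied pointwise in $v\in F$ and $[w,e]\in R$.
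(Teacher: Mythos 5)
Your proposal is correct and follows essentially the same route as the paper: induction on the generation of the (form-valued) tree, realizing edge labels via the linear map \eqref{eq:lin-ind-form}, the symbols $\varsigma_l$ and $\left.D^{(p)}\varsigma_l\right|_{\alpha}$ as the actual functions $\sigma_l$ and their symmetric derivatives on $F$, with $G$-invariance coming from Proposition \ref{prop:lin-ind-form} plus the triviality of the $\GL(V)$-action on $F$, and commutation with $d$ from the chain rule. If anything, you are more explicit than the paper's proof about how the interior nodes are realized and why the relations \eqref{eq:Dss} and the graded signs are respected, which is a welcome refinement rather than a deviation.
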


\begin{proof}
	First consider the case of a zero-form.  We associate $\alpha \in DR^0(\widetilde{\cA})$ to a $G$-invariant $\mathrm{Map}(F,F)$-valued function over $R$ inductively on its generation as an element in $\widetilde{\cA}$.  
	In the zeroth generation, it is just an element in $\mathrm{Mat}_{F}(\hat{\cA})$, which induces a matrix whose entries lie in $\Omega^0(R)^G$ by Proposition \ref{prop:lin-ind-form}.  This gives a self-map $F \to F$ over $[R/G]$.
	If $\alpha$ is in the $p$-th generation, then it is written as $\alpha = a_0 + \sum_{k=1}^m a_k \circ \varsigma_{l(k)} \circ \alpha_k \in DR^0(\widetilde{\cA})$, where $\alpha_k$ is in the $(p-1)$-th generation and induces a self-map $F \to F$ over $[R/G]$.  By composing with the corresponding functions $\sigma_{l(k)}:F\to F$ and the induced functions of $a_k \in \mathrm{Mat}_{F}(\hat{\cA})$, we obtain a self-map $F \to F$ over $[R/G]$ corresponding to $\alpha$.
	
	For a $k$-form $\psi \in DR^\bullet(\widetilde{\cA})$, we do an induction on the generation of its corresponding form-valued tree to associate it with a $G$-invariant $\mathrm{Map}\left(F ,F\right)$-valued $k$-form over $R$.  In th zeroth generation it must be a zero-form (where the associated form-valued tree is simply a single node), which is done by the previous paragraph.  In general $\psi = \sum_k \phi_k \cdot \eta_k$ for some $\phi_k \in DR^\bullet(\mathrm{Mat}_{F}(\hat{\cA}))$ and $\eta_k$ has a smaller generation than $\psi$. Both $\phi_k$ and $\eta_k$ have been associated with $G$-invariant $\mathrm{Map}\left(F ,F\right)$-valued $k$-forms.  Then their matrix products (and by wedge product entriwise) give the required $k$-form associated to $\psi$.
	
	It follows from the chain rule that the differential for $DR^\bullet (\widetilde{\cA})$ given in Definition \ref{def:diff} agrees with that for $\mathrm{Map}\left(F ,F\right)$-valued forms over $R$.  Moreover, for $\phi \in DR^\bullet(\mathrm{Mat}_{F}(\hat{\cA}))$, it is in the first generation written as $\phi \cdot 1$.  By the above definition, the association is given by the map \eqref{eq:lin-ind-form}.
\end{proof}
	
So far, this gives matrix-valued differential forms on $[R/G]$.  To produce $\C$-valued forms, that is, to remove the component $\mathrm{Map}\left(F ,F\right)$ in the above theorem, we proceed as follows.
The near-ring $\widetilde{\cA}$ can be augmented with the inclusion and projection symbols $\iota_i$ and $p_j$, where $\iota_i$ represents the inclusion $\C \to F$ of the $i$-th coordinate axis, and $p_j$ represents the projection $F \to \C$ in the $i$-th direction.  This forms an augmented near-ring
$$ \bigoplus_{k=1}^\infty \left(\{p_1,\ldots, p_n\}\circ \widetilde{\cA} \circ \left(\C\cdot \{\iota_1,\ldots,\iota_n\} \right)\right)^k $$
consisting of linear combinations of elements $\left(p_i \circ \alpha \circ \left(\sum_{j} x_j \iota_j \right)\right)^k$ for $\alpha \in \widetilde{\cA}$,
with the relations $p_i \circ \iota_j = \delta_{ij} \cdot 1$ and $\iota_j \circ 1_{\widetilde{\cA}} \circ p_i = \delta_{ij} \cdot 1$.
Then differential forms in this augmented near-ring induces $G$-invariant differential forms in $ (\Omega^\bullet(R))^{G}$.  The proof is similar and we shall not repeat.   

In application, we fix an algorithm $\tilde{\gamma} \in \widetilde{\cA}$ and consider 
$$\varphi^{\tilde{\gamma}}(x) = \left(p_i \circ \tilde{\gamma} \circ \left(\sum_{j} x_j \iota_j \right) \right)_{i=1}^n$$ 
for each element $x=(x_1,\ldots,x_n) \in F$.  $\varphi^{\tilde{\gamma}}(x)$ is a vector whose entries are elements inside the above augmented near-ring. Given $f: K \to F$, we have 
\begin{equation*}
\int _{K}\left| \varphi^{\tilde{\gamma}}(x)-f\left(x\right)\right|^{2} dx
\end{equation*}
which is a 0-form on the augmented near-ring.  This 0-form and its differential induces the cost function and its differential on $[R/G]$ respectively, which are the central objects in machine learning.

\section{Uniformization}\label{section:unif}

In this section, we apply the idea of uniformization of metrics on framed quiver moduli spaces, which are interpreted as moduli of computing machines from the previous section.

The uniformization theorem for Riemann surfaces was a big discovery of Klein, Poincar\'e and Koebe in the 19th century.  It asserts that every simply connected Riemann surface is conformally equivalent to either the complex plane, the Riemann sphere, or the hyperbolic disc.

Such a classification also holds for Riemannian symmetric spaces.  Namely, any irreducible simply connected symmetric space is either of Euclidean type, compact type, and non-compact type, depending on whether its sectional curvature is identically zero, non-negative, or non-positive.

As a key example, $\Gr(n,d)$ is a compact Hermitian symmetric space. It has a non-compact dual which embeds as an open subset of $\Gr(n,d)$.  This is the celebrated Borel embedding, and was uniformly studied for symmetric R-spaces and generalized Grassmannians in \cite{CHL-Borel}.   The non-compact dual to $\Gr(n,d)$ is the "space-like Grassmannian" which can be thought of as a generalization of hyperbolic space.  

We generalize this to framed quiver varieties.  The key idea is that different types of quiver varieties will arise by considering space-like representations with respect to different choices of quadratic forms on the framing.  As explained in the Introduction, our motivation is to find a relation between our formulation of neural network and the original Euclidean formulation.  Using this construction, we not only get an interpolation between these two different formulations, but also find a non-compact type quiver varieties which can also be used in machine learning.  Such a family of quiver varieties of different types is what we refer as uniformization of framed quiver varieties in the title.

\subsection{A quick review}
Let $Q$ be a directed graph.  Denote by $Q_0, Q_1$ the set of vertices and arrows respectively.  A quiver representation $w$ with dimension vector $d \in \Z_{\geq 0}^{Q_0}$ associates each arrow $a$ with a matrix $w_a$ of size $d_{h(a)}\times d_{t(a)}$ (where $h(a), t(a)$ denote the head and tail vertices of $a$ respectively).  The set of complex quiver representations with dimension $\vec{d}$ form a vector space denoted by $R_{\vec{d}}(Q)$. $\GL(d):=\prod_{i\in Q_0}\GL(d_i, \C)$ acts on $R_{\vec{d}}(Q)$ via
\begin{equation}
g\cdot (w_a: a\in Q_1) = (g_{h(a)}\cdot w_a \cdot g_{t(a)}^{-1}: a \in Q_1).
\label{eq:GL(d)}
\end{equation}

Let $d, n \in \Z_{\geq 0}^{Q_0}$.  $n$ will be the dimension vector for the framing, which is a linear map $e^{(i)}: \C^{n_i} \to V_i$ at each $i\in Q_0$ (where $V_i = \C^{d_i}$).

\begin{theorem}[\cite{Nakajima-proc}]
The vector space of framed representations is given by
$$R_{n,d} = R_{d} \times \bigoplus_{i\in Q_0}\Hom(\C^{n_i}, \C^{d_i}).$$
It carries a natural action of $\GL(d)$ given by $g \cdot (w, e) = (g \cdot w, (ge^{(i)}: i\in Q_0) )$, where $g \cdot V$ is given by Equation \eqref{eq:GL(d)}. $(w, e) \in R_{n,d}$ is called \textit{stable} if there is no proper subrepresentation $U$ of $w$ which contains $\mathrm{Im} \,e$. The set of all stable points of $R_{n,d}$ is denoted by $R_{n,d}^s$.
Then the quotient $\cM_{n,d} := R_{n,d}^s / \GL(d)$ is a smooth variety, which is called to be a framed quiver moduli.
\end{theorem}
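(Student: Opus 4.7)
The first three assertions are essentially unpacking of definitions: the decomposition $R_{n,d} = R_d \times \bigoplus_{i \in Q_0} \Hom(\C^{n_i},\C^{d_i})$ follows by writing the framing $e\colon F \to V$ vertex by vertex, the $\GL(d)$-action formula is the coordinate form of the earlier $\GL(V)$-action, and stability is introduced as a definition. My plan therefore focuses on the substantive conclusion that $\cM_{n,d}$ is a smooth variety, and proceeds in three steps.

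First I would show that the stability condition forces any stabilizer to be trivial. If $g = (g_i) \in \GL(d)$ fixes $(w,e)$, then $g_i \circ e^{(i)} = e^{(i)}$ gives $\mathrm{Im}\, e^{(i)} \subseteq \Ker(g_i - \Id_{V_i})$, while $g_{h(a)} w_a = w_a g_{t(a)}$ shows that $U := (\Ker(g_i - \Id_{V_i}))_{i \in Q_0}$ is a subrepresentation of $w$. Since $U$ contains $\mathrm{Im}\, e$, stability forces $U = V$, hence $g = \Id$; in particular the orbit-map differential $\mathfrak{gl}(d) \to T_{(w,e)} R_{n,d}$ is injective at every stable point.

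Next I would realize $\cM_{n,d}$ as a GIT quotient, by linearizing the $\GL(d)$-action on the affine space $R_{n,d}$ via the character $\chi(g) = \prod_{i \in Q_0} \det(g_i)^{-1}$. Equivalently, one adjoins a framing vertex $\infty$ with $\dim V_\infty = 1$ and $n_i$ arrows $\infty \to i$ to form an augmented quiver $\hat{Q}$, so that framed representations of $Q$ of dimension $d$ become ordinary representations of $\hat{Q}$ of dimension $(d, 1)$, and King's theorem applies. The key verification is that Nakajima stability (no proper subrepresentation containing $\mathrm{Im}\, e$) coincides with $\chi$-stability in King's sense, which identifies $R^s_{n,d}$ with the GIT-stable locus and produces the quasi-projective quotient.

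Finally, smoothness follows from standard facts: since $R_{n,d}$ is a smooth affine variety, $R^s_{n,d}$ is open, and $\GL(d)$ is reductive acting freely and properly on $R^s_{n,d}$ with trivial stabilizers (by the first step), Luna's slice theorem provides \'etale local models by smooth transverse slices to the orbits, and $\cM_{n,d}$ is smooth. The hard part will be the second step --- precisely matching Nakajima's geometric stability with King's numerical GIT stability for the chosen character --- while the trivial-stabilizer and smoothness conclusions are then relatively formal consequences.
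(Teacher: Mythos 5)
The paper does not prove this theorem at all: it is quoted verbatim as background from the cited Nakajima reference, so there is no in-paper argument to compare yours against. On its own merits, your outline is the standard and correct strategy (it is essentially the King--Nakajima argument). Your stabilizer computation is right: $U_i = \Ker(g_i - \Id_{V_i})$ is preserved by the $w_a$ because $g_{h(a)}w_a = w_a g_{t(a)}$, it contains $\Im e$, and stability forces $U = V$, so $\GL(d)$ acts freely on $R^s_{n,d}$. The one genuinely nontrivial step, which you correctly flag but do not execute, is the GIT comparison: after adjoining the framing vertex $\infty$ with $\dim V_\infty = 1$, you must choose the character $\chi(g) = \prod_i \det(g_i)^{\theta_i}\det(g_\infty)^{\theta_\infty}$ with $\theta_i<0$ and $\theta_\infty = -\sum_i \theta_i d_i$ so that $\theta\cdot(d,1)=0$, check that a subrepresentation $(U,U_\infty)$ violates King semistability exactly when $U_\infty = \C$ and $U \supsetneq$ is a proper subrepresentation containing $\Im e$ (the case $U_\infty = 0$ contributes nothing because then $\theta(U) \le 0$ automatically fails or holds trivially depending on sign convention, and this needs to be written out carefully), and observe that stability and semistability coincide for the dimension vector $(d,1)$ since no proper nonzero subrepresentation can have $\theta$-slope zero. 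Finally, in the smoothness step you assert that the action is "free and proper"; freeness is your step one, but properness should be justified, e.g.\ by noting that orbits of $\chi$-stable points are closed in the semistable locus with trivial stabilizers, after which Luna's slice theorem (or the fact that the quotient map is a principal $\GL(d)$-bundle in the \'etale topology) gives smoothness of $\cM_{n,d}$ from smoothness of the affine space $R_{n,d}$.
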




The topology of $\cM_{n,d}$ is well-understood.  Let's make an ordering of the vertices.  Namely the vertices are labeled by $\{1,\ldots,N\}$, such that $i<j$ implies there is no arrow going from $j$ to $i$.  Such a labeling exists if $Q$ has no oriented cycle.

\begin{theorem}[Reineke \cite{Reineke}] \label{thm:Reineke}
Assume $Q$ has no oriented cycle.  Consider the chain of iterated Grassmannian bundles
$M^{(N)}\stackrel{p_N}{\to} M^{(N-1)} \stackrel{p_{N-1}}{\to} \ldots \stackrel{p_{2}}{\to} M^{(1)} \stackrel{p_{1}}{\to} \pt$
(where $\pt$ denotes a singleton)
defined by induction:
$$M^{(i)}=\Gr_{M^{(i-1)}}\left(\underline{\C^{n_{i}}}\oplus\bigoplus_{j\to i}p_{i-1}^*\dots p_{j+1}^*(S_j), d_{i}\right) \to M^{(i-1)},$$
where $S_i$ denotes the tautological bundle on $M_i$ (as a Grassmannian bundle over $M_{i-1}$).  (The direct sum is over each arrow $j\to i$.)
Then $\cM_{\vec{n},\vec{d}}\cong M^{(N)}$, with universal bundles $\mathcal{V}_i\cong p_N^*\dots p_{i+1}^*S_i$ for all $i\in Q_0$.
\end{theorem}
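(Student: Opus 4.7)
The plan is to prove the theorem by induction on $N = |Q_0|$, peeling off vertices in reverse topological order so that at each stage the vertex being removed has only incoming arrows from vertices already processed. The base case $N = 1$ is immediate: vertex $1$ has no incoming arrows, so stability reduces to surjectivity of $e_1 : \C^{n_1} \to V_1$, and quotienting by $\GL(V_1)$ identifies $\cM_{\vec{n},\vec{d}}$ with $\Gr(\C^{n_1}, d_1) = M^{(1)}$, with the universal bundle matched to $S_1$.

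For the inductive step, I would consider the subquiver $Q'$ on vertices $\{1,\ldots,N-1\}$ with restricted dimension vectors $\vec{n}' = (n_j)_{j<N}$ and $\vec{d}' = (d_j)_{j<N}$, together with the forgetful map $\pi : \cM_{\vec{n},\vec{d}} \to \cM_{\vec{n}',\vec{d}'}$ that drops all data at vertex $N$. The first key step is a stability factorization lemma: using the topological ordering, the smallest subrepresentation containing $\Im e$ is computed layer by layer as $W_i = \Im e_i + \sum_{a:j\to i} w_a(W_j)$ (well-defined because $j < i$ for every arrow $a:j\to i$). Hence $(w,e)$ is stable if and only if the restricted datum $(w',e')$ on $Q'$ is stable and, in addition, the combined map
\[
(e_N, (w_a)_{a:j\to N}) : \C^{n_N} \oplus \bigoplus_{a:j\to N} V_j \twoheadrightarrow V_N
\]
is surjective.

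Granting this factorization, the fiber of $\pi$ over a stable class $[(V_j, w_a, e_j)_{j<N}]$ parametrizes $(V_N, e_N, (w_a)_{a:j\to N})$ modulo $\GL(V_N)$, subject only to the surjectivity above; this fiber is naturally the Grassmannian of $d_N$-dimensional quotients of $\C^{n_N}\oplus \bigoplus_{a:j\to N} V_j$. Globalizing, these targets assemble into the bundle $\underline{\C^{n_N}} \oplus \bigoplus_{a:j\to N} \cV_j^{(N-1)}$ over $\cM_{\vec{n}',\vec{d}'}$, so $\cM_{\vec{n},\vec{d}}$ is realized as the total space of the associated relative Grassmannian bundle. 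The inductive hypothesis $\cM_{\vec{n}',\vec{d}'} \cong M^{(N-1)}$ with $\cV_j^{(N-1)} \cong p_{N-1}^*\cdots p_{j+1}^* S_j$ then identifies this with $M^{(N)}$; the new universal bundle at vertex $N$ is the tautological rank-$d_N$ bundle $S_N$, and for $j<N$ one has $\cV_j = p_N^* \cV_j^{(N-1)} \cong p_N^*\cdots p_{j+1}^* S_j$ by pulling back.

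The main obstacle will be twofold. First, the stability factorization must be verified carefully: the forward direction is essentially immediate from restriction, but the reverse direction requires showing that, assuming $(w',e')$ is stable and the map at vertex $N$ is surjective, no proper subrepresentation of the full $(w,e)$ can contain $\Im e$ — which amounts to checking that any such subrepresentation would either restrict nontrivially to $Q'$ or violate surjectivity at $N$. Second, and more subtle, is promoting the pointwise Grassmannian fiber description to an isomorphism of algebraic varieties: one constructs the inverse morphism via the universal quotient on the relative Grassmannian bundle and must verify it lands in the stable locus and is inverse to $\pi$ functorially in families. A minor bookkeeping point is to fix a convention for $\Gr(E,k)$ (subspaces versus quotients): since $V_i$ arises naturally as a quotient of $\C^{n_i}\oplus \bigoplus \cV_j$ but the theorem identifies $\cV_i \cong S_i$, the identification either uses the convention that $S_i$ is the tautological quotient bundle, or dualizes appropriately.
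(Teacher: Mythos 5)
The paper does not prove this result; it is quoted verbatim from Reineke, so there is no internal proof to compare against. Your outline is correct and reproduces the standard argument: the reduction of stability to surjectivity of $(e_i,(w_a)_{a:h(a)=i})\colon \C^{n_i}\oplus\bigoplus_{a:h(a)=i}V_{t(a)}\to V_i$ at each vertex via the topologically ordered recursion $W_i=\Im e_i+\sum_a w_a(W_j)$, followed by peeling off the last vertex as a relative Grassmannian of quotients, is exactly how Reineke's theorem is proved. Your bookkeeping worry about conventions is resolved by the paper itself (Example \ref{ex:A1} takes $\Gr(n,d)$ to mean surjections $\C^n\to\C^d$, so $S_i$ is the tautological quotient bundle), and the remaining work you flag --- upgrading the fiberwise bijection to an isomorphism of varieties via the universal quotient --- is the genuinely nontrivial but routine part of the argument.
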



In the previous paper \cite{JL} we introduced a Hermitian metric $H_i$ for each of these $\mathcal{V}_i$ and showed that its Ricci curvature induces a K\"ahler metric on $\mathcal{M}$. Let's quickly review this construction.

\begin{theorem}[\cite{JL}]
\label{theorem:metric}
Let $Q$ be a finite quiver. Let $R_{n,d}$ be the space of framed quiver representations of $Q$ with representing dimension $d$ and framing dimension $n$. For any path $\gamma$ in $Q$, let $e^{t(\gamma)}$ be the framing map associated to the vertex $t(\gamma)$ and let $w_{\gamma}$ be the matrix representation of $\gamma$.

For a fixed vertex $(i)$, let $\rho_i$ be the row vector whose entries are all the elements of the form $w_{\gamma}e^{t(\gamma)}:R_{n,d}\to \Hom(\C^{n_{t(\gamma)}}, \C^{d_i})$ such that $h(\gamma)=i$. Consider 

\begin{equation}\label{equation:metric}
\rho_i\rho_i^*=\sum\limits_{h(\gamma)=i}\left(w_{\gamma}e^{t(\gamma)}\right)\left(w_{\gamma}e^{t(\gamma)}\right)^*
\end{equation} 
as a map $\rho_i\rho_i^*:R_{n,d}\to \text{End}(\C^{d_i})$.

Then $(\rho_i\rho_i^*)^{-1}$ is $\GL(d)$-equivariant and descends to a Hermitian metric on $\mathcal{V}_i$ over $\mathcal{M}$. We denote this resulting metric as $H_i$.

Suppose $Q$ has no oriented cycle. Then
\begin{equation}
H_T:=\sum_i\partial\bar{\partial}\log\det H_i=\sum_i\left(tr(\partial\rho_i)^*H_i\partial\rho_i-tr\left(H_i\rho_i(\partial\rho_i)^*H_i(\partial\rho_i)\rho_i^*\right)\right)
\end{equation}
defines a K\"ahler metric on $\mathcal{M}$.
\end{theorem}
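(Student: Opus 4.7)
The plan is to break the statement into (i) the descent of $(\rho_i \rho_i^*)^{-1}$ to a Hermitian metric $H_i$ on $\mathcal{V}_i$, (ii) the explicit two-term formula for $H_T$, and (iii) strict positivity of $H_T$; the last is where the main obstacle lies.

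For (i), I would unwind how $\rho_i$ transforms under $g = (g_j)_{j \in Q_0} \in \GL(d)$ acting by \eqref{eq:GL(d)}. For a path $\gamma$ from $j := t(\gamma)$ to $i = h(\gamma)$, the intermediate factors $g_k g_k^{-1}$ in the product $w_{a_1} \cdots w_{a_\ell}$ defining $w_\gamma$ telescope, giving $w_\gamma \mapsto g_i w_\gamma g_j^{-1}$. Combined with $e^{(j)} \mapsto g_j e^{(j)}$, this yields $w_\gamma e^{t(\gamma)} \mapsto g_i \, w_\gamma e^{t(\gamma)}$, so $\rho_i \mapsto g_i \rho_i$ and $\rho_i \rho_i^* \mapsto g_i (\rho_i \rho_i^*) g_i^*$. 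Inverting gives $(\rho_i \rho_i^*)^{-1} \mapsto (g_i^*)^{-1} (\cdot) g_i^{-1}$, which is exactly the transition rule for a fibre metric on $\mathcal{V}_i$ (whose local frames transform by $g_i$). Stability forces the columns of $\rho_i$ to span $\mathbb{C}^{d_i}$, since otherwise the orthogonal complement would generate a proper subrepresentation containing $\mathrm{Im}\, e^{(i)}$; hence $\rho_i \rho_i^*$ is positive definite and $H_i$ is well defined.

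For (ii), the key point is that $\rho_i$ is polynomial in the holomorphic coordinates $(w, e)$, so $\bar\partial \rho_i = 0$ and $\partial \rho_i^* = 0$. Starting from $\log \det H_i = -\log \det(\rho_i \rho_i^*)$ and the identity $\partial \log \det M = \mathrm{tr}(M^{-1} \partial M)$, together with $\partial(\rho_i \rho_i^*) = (\partial \rho_i) \rho_i^*$ and $\bar\partial H_i = -H_i \rho_i (\bar\partial \rho_i^*) H_i$, expanding $\partial \bar\partial \log \det H_i$ and then using $\bar\partial \rho_i^* = (\partial \rho_i)^*$ with cyclicity of the trace produces the two displayed terms. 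The $\GL(d)$-invariance of each $\log \det H_i$ up to an additive real constant ensures that $H_T$ descends to a global real $(1,1)$-form on $\mathcal{M}$.

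The main obstacle is (iii): the first summand $\mathrm{tr}((\partial \rho_i)^* H_i \partial \rho_i)$ is manifestly non-negative, but the subtracted second term requires a structural argument, and this is where the no-oriented-cycle hypothesis enters through Reineke's description of $\mathcal{M}$ as the iterated Grassmannian bundle tower $M^{(N)} \to \cdots \to M^{(1)} \to \pt$ (Theorem \ref{thm:Reineke}). The plan is to identify, for each vertex $i$, the metric $H_i$ with the fibre metric on the tautological subbundle $\mathcal{V}_i = S_i$ of $E_i := \underline{\mathbb{C}^{n_i}} \oplus \bigoplus_{j \to i} p_{i-1}^* \cdots p_{j+1}^*(S_j)$ induced from the natural metric on $E_i$ (the standard metric on $\underline{\mathbb{C}^{n_i}}$ and the pullback of $H_j$ on each $S_j$-summand), using the recursive structure of $\rho_i$ in which depth-one paths are $w_a e^{(j)}$ for arrows $a: j \to i$ and deeper paths are obtained by composition. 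Under this identification, $\partial \bar\partial \log \det H_i$ restricts on each fibre of $p_i: M^{(i)} \to M^{(i-1)}$ to the Fubini-Study form of $\Gr(d_i, E_i)$, hence is positive along those fibre directions. Inducting on $i$, and combining the fibrewise positivity of the $i$-th summand with the inductive positivity of $\sum_{j < i} \partial \bar\partial \log \det H_j$ along $M^{(i-1)}$ (pulled back via $p_i$ to handle the horizontal directions), one concludes strict positivity of $H_T$ on all of $M^{(N)} = \mathcal{M}$, so $H_T$ is Kähler.
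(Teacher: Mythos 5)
Your parts (i) and (ii) are essentially the paper's own argument: the equivariance computation $\rho_i\mapsto g_i\rho_i$ is Lemma \ref{lemma:hypequiv}, and the two-term trace formula is exactly the expansion carried out in the proof of Theorem \ref{corollary:Kahler}. One small slip in (i): it is $\Im\rho_i$ itself (not its orthogonal complement) that is the $i$-th component of the subrepresentation generated by $\Im e$, and stability forces it to be all of $\C^{d_i}$; but the conclusion (surjectivity of $\rho_i$, hence positive-definiteness of $\rho_i\rho_i^*$) is right.

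The gap is in (iii). Your induction needs each summand $\omega_i:=\pm i\,\partial\bar\partial\log\det H_i$ to be semi-positive in \emph{all} directions of $M^{(i)}$, not merely along the fibres of $p_i$: for a tangent vector with both vertical and horizontal components you must know $\omega_i(v,\bar v)\geq 0$ so that the strictly positive contribution of $p_i^*\sum_{j<i}\omega_j$ is not cancelled. Fibrewise Fubini--Study positivity does not give this, because in horizontal directions the Hermitian metric on your $E_i$ itself varies (its summands carry the metrics $H_j$), and the curvature of the induced metric on a sub- or quotient bundle of a \emph{non-flat} $E_i$ acquires a contribution from the curvature of $E_i$; you would therefore have to propagate Griffiths semi-positivity of the $\cV_j$'s through the whole tower, which you never do. As you yourself note, the subtracted second trace term makes semi-positivity of $\omega_i$ non-obvious, and your proposal does not close that loop. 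The paper's route (see the proof of Theorem \ref{corollary:Kahler}, which mirrors the proof in \cite{JL}) avoids this entirely by presenting $\cV_i$ as a quotient of the \emph{flat} trivial bundle $\underline{W_i}$, $W_i=\bigoplus_{(j)\rightsquigarrow(i)}\C^{n_j}$, indexed by \emph{all} paths into $i$ rather than by the incoming arrows with their tautological bundles. Decomposing $W_i=\Im\rho_i^*\oplus(\Im\rho_i^*)^{\perp}$ and writing $\partial_v\rho_i=((\partial_v\rho_i)_1,(\partial_v\rho_i)_2)$ accordingly, the two trace terms are exactly $|\partial_v\rho_i|^2_{H_i}$ and $|(\partial_v\rho_i)_1|^2_{H_i}$, so $\omega_i(v,\bar v)=|(\partial_v\rho_i)_2|^2_{H_i}\geq 0$ for \emph{every} $v$. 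Strict positivity of the sum then follows because if $(\partial_v\rho_i)_2=0$ for all $i$, then $v$ does not move any of the subspaces $\Im\rho_i^*\subset W_i$, and these subspaces embed $\cM$ into $\prod_i\Gr(d_i,W_i)$, forcing $v=0$. If you want to keep your tower induction, you must first establish the global inequality $\omega_i\geq 0$ (for instance by this flat-ambient computation); only then does your vertical/horizontal bookkeeping yield a Kähler form.
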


\subsection{Illustration by examples}

First, consider the simplest possible example, namely the quiver with a single vertex.

\begin{example} \label{ex:A1}
	Let $Q$ consist of a single vertex $(1)$ with no arrows. Let the representing dimension and the framing dimension be $d$ and $n$ respectively where $d<n$. The framed quiver moduli is simply $\Gr(n,d)$, the Grassmannian of surjective linear maps $\C^n \to \C^d$. Equation \ref{equation:metric} becomes $H=ee^*$ on the universal bundle over the dual Grassmannian. If we take the chart where the first $d$-many components of $e$ form an invertible map, we can rewrite $e$ as $e=(Id_d, b)$ due to the $G_d$-equivalence. Then $H$ becomes $(Id_d+b)^{-1}$, the standard metric on the universal bundle over $\Gr(n,d)$. In particular, for $d=1$, $\Gr(n,1)$ is the projective space $\mathbb{P}^{n-1}$, and the Ricci curvature of $H$ is the Fubini-Study metric.
\end{example}

\begin{figure}[h]
	\begin{tikzcd}
		& (3) \arrow[dr, "a_4"] & \\ (1) \arrow[dr, "a_1"]\arrow[ur, "a_2"] & & (4)\\ & (2) \arrow[ur, "a_3"] &
	\end{tikzcd}
	\caption{}
	\label{fig:A4}
\end{figure}

Now consider a slightly more involved example.  Let $Q$ be the quiver depicted in Figure \ref{fig:A4}. Thus the vertex set is $\{(1), (2), (3), (4)\}$ and the arrow set is $\{a_1:(1)\to (2), a_2:(1)\to (3), a_3:(2)\to (4), a_4:(3)\to (4)\}$. We define $\mathcal{A}$ to be the path algebra of $Q$ over $\C$. That is, an element of $\mathcal{A}$ is a formal sum over $\C$ generated by the paths on the underlying directed graph of $Q$. In addition, we associate to $Q$ a vector space $V=\C^{d}$ and a framing space $F=\C^{n}$ with some kind of decomposition as $F=F_{\mathrm{in}}\oplus F_2\oplus F_3\oplus F_{\mathrm{out}}$. In this setting, we take $(1)$ as our input vertex, $(4)$ as our output vertex, and vertices $(2)$ and $(3)$ as "middle" or "memory" vertices, so we think of $F_{in}$ as being associated to $(1)$, $F_{\mathrm{out}}$ to $(4)$, and $F_m\cong F_2\oplus F_3$ with $(2)$ and $(3)$ respectively. In addition, we want a decomposition of $V$ as $V=V_1\oplus V_2\oplus V_3\oplus V_4$ with $V_i$ associated to vertex $(i)$ for each $i$. We endow this space with a $GL(V)$-equivariant family of Hermitian metrics $\bigoplus_iH_i$: (Theorem \ref{theorem:metric}):

$$\rho_i\rho_i^*=\sum\limits_{h(\gamma)=i}\left(w_{\gamma}e^{t(\gamma)}\right)\left(w_{\gamma}e^{t(\gamma)}\right)^*.$$

Take activation functions $\sigma_j^F:F_m\to F_m$. We have an activation module (Definition \ref{def:cm}). The framing maps are of the form $(e_{\mathrm{in}}, e_2, e_3, e_{\mathrm{out}})$ where $(e_{\mathrm{in}})_j:F_{\mathrm{in}}\to V_1$, $(e_2)_j:F_2\to V_2$, $(e_3)_j:F_3\to V_3$, and $(e_{\mathrm{out}})_j:F_{\mathrm{out}}\to V_4$.  We can be extend by zero and encode all of them as $e:F\to V$. Furthermore, we can require the maps $\sigma_j^F$ to decompose as $(\sigma_j^F)_2\oplus(\sigma_j^F)_3$ where $(\sigma_j^F)_i$ is a map from $F_i\to F_i$.

Now, we want to choose an algorithm, that is, an element $\tilde{\gamma}\in\tilde{\mathcal{A}}$ that starts at $F_{\mathrm{in}}$ and ends at $F_{\mathrm{out}}$. This takes the form \[\tilde{\gamma}=e_{\mathrm{out}}^*(a_4e_m\sigma_2e_m^*a_2e_{\mathrm{in}}+a_3e_m\sigma_3e_m^*a_1)e_{\mathrm{in}}.\]

\begin{figure}[h]
	\centering
	\includegraphics[width=\linewidth]{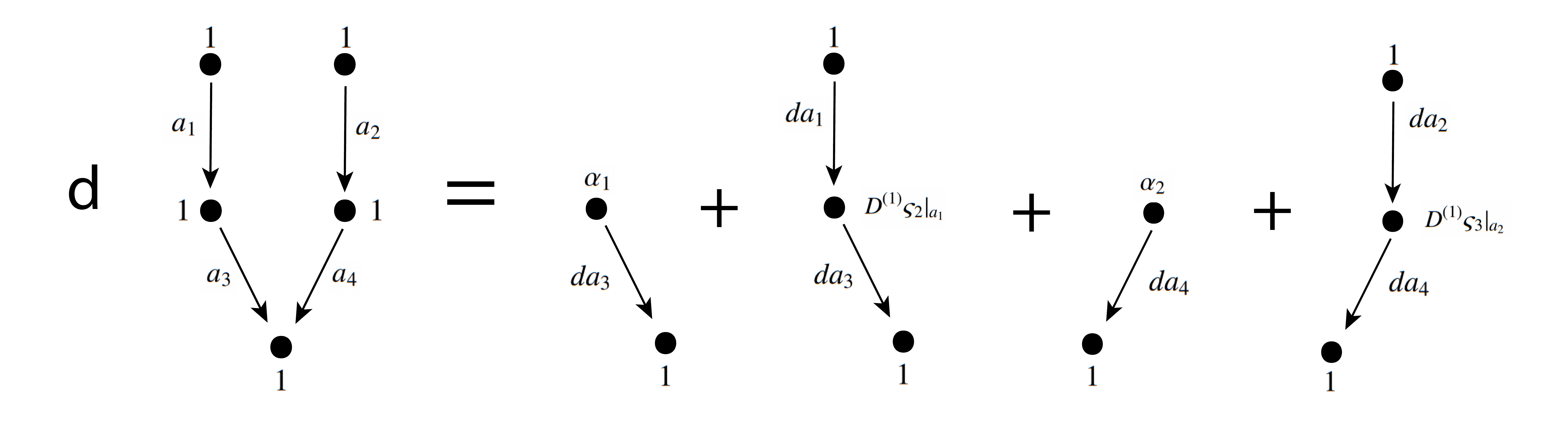}
	\caption{}
	\label{fig:ex-tree}
\end{figure}

The activation tree for this algorithm is shown in Figure \ref{fig:ex-tree} on the left. The rest of Figure \ref{fig:ex-tree} is the form-valued tree derived from the activation tree as in Figure \ref{fig:differential}. This activation tree gives the forward propagation used in deep learning for actual computation. Just as vital, the back propagation attained from the form-valued tree is used in order to train the network.

\subsection{The non-compact dual of framed quiver moduli}\label{subsection:hyper}

Assume that $n_i\geq d_i$ $\forall i$. We write the framing map as $e^{(i)}=(\epsilon_i$ $b_i)$ where $\epsilon_i$ and $b_i$ are respectively the "basis part" and "bias part of our framing map $e^{(i)}$. Then Equation \ref{equation:metric} can be modified to be:
\begin{equation}\label{eq:uniform}
H_i^{\alpha}=\left(\epsilon_i\epsilon_i^*+\alpha b_ib_i^*+\sum_{\gamma:h(\gamma)=i, \gamma\neq\emptyset}\alpha_{\gamma}w_{\gamma}e^{t(\gamma)}\left(w_{\gamma}e^{t(\gamma)}\right)^*\right)^{-1}.
\end{equation}

It is this generalization of the metric which we use for the uniformization. By varying $\alpha$ and $\alpha_{\gamma}$, we get different quadratic forms. For example, in Equation \ref{equation:metric}, $\alpha$ and all $\alpha_{\gamma}$ are simply $1$. The zero curvature case will elaborated on later in Section \ref{section:Euclidean}.

\begin{remark}
	The application of hyperbolic geometry has mostly focused on fiber direction in existing literature, namely the representation spaces (and their corresponding universal bundles over the moduli).  Here, we are concerned about metrics on the moduli space (playing the role of the weight space).  It is general for all quiver moduli, not just restricted to specific models.  Thus, in this moduli approach, the method of varying metrics (with positive, zero or negative curvatures) can be applied to any model of machine learning.
\end{remark}

For now we will set the $\alpha$ and $\alpha_{\gamma}$ to -1 to consider the negative curvature case. Namely,
\begin{equation}\label{equation:hyperbolic}
H_i^-:=\left(\epsilon_i\epsilon_i^*-b_ib_i^*-\sum_{\gamma:h(\gamma)=i, \gamma\neq\emptyset}w_{\gamma}e^{t(\gamma)}\left(w_{\gamma}e^{t(\gamma)}\right)^*\right)^{-1}.
\end{equation}
It must be emphasized that this quadratic form is \textbf{not} positive-definite on $\mathcal{V}_i$ and thus cannot serve as a metric.

The brilliant idea here is that we restrict to the subset of the moduli space where this quadratic form is positive-definite and thus gives a metric. This restriction gives the non-compact dual of the framed quiver moduli. As before, let's consider the $A_1$-quiver and what this metric looks like on that quiver in particular.

\begin{example}\label{example:space}
Let $Q$ consist of a single vertex with no arrows. Let the framing dimension of the representation space be $n$, and suppose the representing dimension at the single vertex be 1. Equation \ref{equation:hyperbolic} becomes
\[H^-=(|\epsilon|^2-|b|^2)^{-1}.\]

Since we restrict to the subset where $H^-$ is positive-definite, $\epsilon$ needs to be nonzero. By applying the quiver automorphism, $\epsilon$ can be rescaled to be 1.  Thus $H^-=(1-|b|^2)^{-1}$, and $|b|^2<1$. This gives the hyperbolic moduli, which is the open unit ball in $\C^{n-1}$. The Ricci curvature of $H^-$ gives the Poincar\'e metric.
\end{example}

Thus from Examples \ref{ex:A1} and \ref{example:space} we can see the motivating duality mentioned at the start of the section.

\begin{defn}\label{def:hyper}
Assume $Q$ has no oriented cycle. Let $\rho_i$ be as in Definition \ref{theorem:metric} so that $\rho_i$ is a row vector with entries of the form $w_{\gamma}e^{t(\gamma)}$ where $\gamma$ is some path in $Q$ ending at vertex $(i)$, $w_{\gamma}$ is the representing matrix of this path, and $e^{t(\gamma)}$ is the framing map at $t(\gamma)$, the starting vertex of $\gamma$. Arrange the entries of $\rho_i$ so that the first $n_i$-many entries correspond to the framing arrows at vertex $(i)$. Then let $H_i^-$ be the quadratic form defined by:

\begin{equation} \label{eq:H_i^-}
H_i^-=\left(\rho_i\left(\begin{matrix} I_{d_i} & 0 \\0 & -I_{N_i-d_i}\end{matrix}\right)\rho_i^*\right)^{-1}
\end{equation}

Here, $N_i=\sum\limits_{(j)\rightsquigarrow (i)}n_j$. We define $R_{n,d}^-$ to be the subset of $R_{n,d}$ where $H_i^-$ is positive-definite for all $i$.
\end{defn}

In above, the notation $(j)\rightsquigarrow (i)$ means all paths starting from $(j)$ and ending at $(i)$, not just single arrows. In particular, $n_j$ gets counted once for each distinct path from $(j)$ to $(i)$. Note that $N_i\geq d_i$ $\forall i$ when $\mathcal{M}\neq\emptyset$, which we always assume to be the case.

\begin{prop} \label{prop:R-}
	\[  R_{n,d}^- \subset R_{n,d}^s.\]
\end{prop}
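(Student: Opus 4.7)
The plan is to unwind the definitions and observe that the positive-definiteness of $H_i^-$ is strong enough to force each framing map $e^{(i)}$ to already be surjective onto $V_i$, from which stability follows immediately.

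Concretely, fix $(w,e) \in R_{n,d}^-$; by hypothesis, for every $i \in Q_0$ the matrix $A_i := \rho_i\, \mathrm{diag}(I_{d_i}, -I_{N_i-d_i}) \, \rho_i^*$ is positive-definite. The first step is to expand $A_i$ using the column block decomposition $\rho_i = [\,\epsilon_i \mid b_i \mid w_{\gamma_1}e^{t(\gamma_1)} \mid \cdots\,]$, where $\epsilon_i$ consists of the first $d_i$ columns of $e^{(i)}$ and the remaining columns get the $-1$ weight. This yields
$$A_i = \epsilon_i \epsilon_i^{*} - b_i b_i^{*} - \sum_{\gamma \neq \emptyset,\, h(\gamma) = i} \bigl(w_\gamma e^{t(\gamma)}\bigr)\bigl( w_\gamma e^{t(\gamma)}\bigr)^{*},$$
exactly matching Equation \eqref{equation:hyperbolic}.

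The second step is to note that every subtracted term has the form $XX^{*}$ and is therefore positive semi-definite, so $A_i \leq \epsilon_i \epsilon_i^{*}$ in the Loewner order. Combined with $A_i > 0$, this forces $\epsilon_i \epsilon_i^{*} > 0$, and since $\epsilon_i$ is a $d_i \times d_i$ matrix, it must be invertible. Consequently, $\mathrm{Im}\, e^{(i)} \supseteq \mathrm{Im}\, \epsilon_i = V_i$, so $\mathrm{Im}\, e^{(i)} = V_i$ at every vertex $i$.

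The third and final step is a one-line check against the definition of stability: if $U = (U_i)_{i \in Q_0}$ were any subrepresentation of $w$ containing $\mathrm{Im}\,e$, then $U_i \supseteq \mathrm{Im}\, e^{(i)} = V_i$ for every $i$, forcing $U = V$. Hence no proper subrepresentation can contain the image of the framing, which is precisely the definition of $(w,e) \in R_{n,d}^{s}$. There is no real obstacle to overcome here; the entire content of the proposition is the observation that in a signed Gram expression, positivity of the full form compels positivity of the "positive-signed block" $\epsilon_i \epsilon_i^{*}$ on its own, which is already much stronger than what stability demands.
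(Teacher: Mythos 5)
Your proposal is correct and follows essentially the same route as the paper: expand the signed Gram matrix, observe that positive-definiteness forces the square block $\epsilon_i\epsilon_i^*$ to be positive-definite (the paper phrases this contrapositively via a vector in $\ker \epsilon_i^*$, you via the Loewner inequality $A_i \leq \epsilon_i\epsilon_i^*$ — the same argument), conclude $\epsilon_i$ is invertible, and deduce stability. Your final step is slightly more explicit than the paper's (which jumps from surjectivity of $\rho_i$ to stability), but there is no substantive difference.
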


\begin{proof}
	Consider a point in $R_{n,d}^-$.  Write $\rho_i = (\epsilon_i \,\, R)$ evaluated at this point as a $(d_i \times N_i)$-matrix, where $\epsilon_i$ is a $(d_i \times d_i)$-matrix and $R$ is the remaining part.  Then $H_i^- = (\epsilon_i \epsilon_i^* - RR^*)^{-1}$.  We claim that $\epsilon_i$ must be invertible, and hence $\rho_i$ is surjective.  This is true for all $i$, and hence the point is stable.
	
	Suppose $\epsilon_i$ is not invertible.  Then there exists $v$ such that $\epsilon_i^*\cdot v=0$.  Then $v^* H_i^- v = - v^* RR^* v \leq 0$, contradicting that $H_i^-$ evaluated at each point in $R_{n,d}^-$ is positive-definite.
\end{proof}

\begin{lemma}\label{lemma:hypequiv}

$H_i^-$ is $G_d$-equivariant and $R_{n,d}^-$ is $G_d$-invariant.

\end{lemma}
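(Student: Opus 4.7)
The plan is straightforward: compute how the row vector $\rho_i$ transforms under the $G_d = \GL(d)$-action, deduce from this the transformation rule of the quadratic form $H_i^-$, and then invoke the fact that congruence preserves positive-definiteness. Both claims will follow from the single computation $\rho_i \mapsto g_i \rho_i$.

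First I would fix $g \in G_d$ and check the transformation entry-by-entry. A typical entry of $\rho_i$ is $w_\gamma e^{t(\gamma)}$ for a path $\gamma$ ending at $i$ (the case of the trivial path at $i$, giving the entry $e^{(i)}$, is included). Along each arrow $a$ of $\gamma$ we have $w_a \mapsto g_{h(a)} w_a g_{t(a)}^{-1}$ by Equation \eqref{eq:GL(d)}, so the intermediate factors $g_j^{-1} g_j$ telescope when we compose along $\gamma$, yielding $w_\gamma \mapsto g_i\, w_\gamma\, g_{t(\gamma)}^{-1}$. Combined with $e^{t(\gamma)} \mapsto g_{t(\gamma)} e^{t(\gamma)}$, this gives $w_\gamma e^{t(\gamma)} \mapsto g_i \cdot w_\gamma e^{t(\gamma)}$, and the trivial-path case $e^{(i)} \mapsto g_i e^{(i)}$ fits the same formula. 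Hence $\rho_i \mapsto g_i \rho_i$.

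Setting $J_i := \mathrm{diag}(I_{d_i},\, -I_{N_i - d_i})$, we then compute
\[
(H_i^-)^{-1} \;=\; \rho_i J_i \rho_i^* \;\longmapsto\; g_i \rho_i J_i \rho_i^* g_i^*,
\]
so that $H_i^- \mapsto (g_i^*)^{-1} H_i^- g_i^{-1}$. This is precisely the $G_d$-equivariance rule that the families of Hermitian forms on $\mathcal{V}_i$ must satisfy (cf.\ Equation \eqref{eq:h-equiv} and the $\GL(V)$-equivariance discussion preceding Theorem \ref{theorem:metric}), which establishes the first claim.

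For the second claim, note that congruence $H \mapsto P^* H P$ with $P$ invertible preserves positive-definiteness: for any nonzero $v$, $v^*(g_i^*)^{-1} H_i^- g_i^{-1} v = (g_i^{-1} v)^* H_i^- (g_i^{-1} v) > 0$ whenever $H_i^-$ is positive-definite. Thus if $(w,e) \in R_{n,d}^-$ then every $H_i^-$ remains positive-definite at $g\cdot(w,e)$, so $g\cdot (w,e) \in R_{n,d}^-$. There is no real obstacle here: the only care is to treat the trivial-path entry $e^{(i)}$ uniformly with the longer paths in the telescoping computation.
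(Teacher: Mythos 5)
Your proposal is correct and follows essentially the same route as the paper: show each entry $w_\gamma e^{t(\gamma)}$ of $\rho_i$ transforms to $g_i\, w_\gamma e^{t(\gamma)}$, deduce that $H_i^-$ transforms by congruence with $g_i^{-1}$, and conclude invariance of $R_{n,d}^-$ because congruence by an invertible matrix preserves positive-definiteness. You spell out the telescoping along paths more explicitly than the paper does (and your ordering $(g_i^*)^{-1} H_i^- g_i^{-1}$ is in fact the correct one), but there is no substantive difference in approach.
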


\begin{proof}

$H_i^-$ is $GL(d)$-equivariant because

\[\left((g\cdot w_{\gamma}e^{t(\gamma)})(g\cdot w_{\gamma}e^{t(\gamma)})^*\right)^{-1}=g^{-1}\left((w_{\gamma}e^{t(\gamma)})(w_{\gamma}e^{t(\gamma)})^*\right)^{-1}(g^*)^{-1}.\]

The reason that $R_{n,d}^-$ is $GL(d)$-invariant is because if $x^*\left((w_{\gamma}e^{t(\gamma)})(w_{\gamma}e^{t(\gamma)})^*\right)^{-1}x>0$, then

 \[(g\cdot x)^*\left((g\cdot w_{\gamma}e^{t(\gamma)})(g\cdot w_{\gamma}e^{t(\gamma)})^*\right)^{-1}(g\cdot x)=x^*\left((w_{\gamma}e^{t(\gamma)})(w_{\gamma}e^{t(\gamma)})^*\right)^{-1}x>0\]
 
 by the $GL(d)$-equivariance of $H_i^-$. Thus, action by $GL(d)$ sends $R_{n,d}^-$ to itself.

\end{proof}

\begin{defn}
If $n_i\geq d_i$, then $e^{(i)}$ can be written as $e^{(i)}=(\epsilon_i, b_i)$ where $\epsilon_i$ is the $d_i$-many components of $e^{(i)}$ and $b_i$ is the remaining $(n_i-d_i)$-many components. We call $\epsilon_i$ to be the basis part of $e^{(i)}$ and $b_i$ to be the bias part of $e^{(i)}$.
\end{defn}

We call $\epsilon_i$ the basis part because we think of it as imposing a basis on $V_i$.

From now on, we will assume $n_i \geq d_i$, which is the case in applications.  This assumption also ensures that the choices of negative signs in defining $H_i^-$ for different vertices $i$ are compatible, so that $R_{n,d}^- \not= \emptyset$.


\begin{prop}\label{prop:invertible}
Assume that $n_i\geq d_i$ for all $i$. \[\emptyset \not=R_{n,d}^-\subset\{\epsilon_i \text{ is invertible for all }i\} \subset R_{n,d}^s.\]

\end{prop}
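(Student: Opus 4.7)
The plan is to break the chain of inclusions into its three constituent claims and verify each separately, since none of them requires heavy machinery once unpacked.

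First, I would establish non-emptiness. The natural candidate point is the ``origin'': take $w_a = 0$ for every arrow $a \in Q_1$, $b_i = 0$, and $\epsilon_i = I_{d_i}$ for every $i \in Q_0$. Under these choices, for each vertex $i$ every nontrivial path $\gamma$ has $w_\gamma = 0$, so $\rho_i$ collapses to $(\epsilon_i\ b_i) = (I_{d_i}\ 0)$ and therefore
\[
H_i^- = (\epsilon_i \epsilon_i^* - b_i b_i^*)^{-1} = I_{d_i},
\]
which is positive definite. Hence this point lies in $R_{n,d}^-$.

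For the first inclusion $R_{n,d}^- \subset \{\epsilon_i \text{ invertible for all } i\}$, the argument is exactly the one already used inside the proof of Proposition \ref{prop:R-}: if $\epsilon_i$ were singular at some point of $R_{n,d}^-$, pick $v \neq 0$ with $\epsilon_i^* v = 0$; then splitting $\rho_i = (\epsilon_i\ R)$ into its basis part and the remaining ``negative'' block yields
\[
v^* H_i^- v \;=\; v^*(\epsilon_i\epsilon_i^* - RR^*) v \;=\; -v^* R R^* v \;\le\; 0,
\]
contradicting the positive-definiteness of $H_i^-$ on $R_{n,d}^-$. So I would just quickly refer back to that computation.

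For the second inclusion $\{\epsilon_i \text{ invertible for all } i\} \subset R_{n,d}^s$, the point is that invertibility of $\epsilon_i: \C^{d_i} \to V_i = \C^{d_i}$ means $\epsilon_i$ is surjective, so $\mathrm{Im}\, e^{(i)} \supset \mathrm{Im}\, \epsilon_i = V_i$ for each $i$. Summing over vertices, $\mathrm{Im}\, e = \bigoplus_i V_i = V$. Any subrepresentation $U$ of $w$ containing $\mathrm{Im}\, e$ must therefore equal $V$, so there is no proper such $U$, and stability follows from the definition recalled before Theorem \ref{thm:Reineke}. The hypothesis $n_i \geq d_i$ is used implicitly here so that ``$\epsilon_i$ invertible'' is a meaningful (nonempty) condition and the decomposition $e^{(i)} = (\epsilon_i\ b_i)$ makes sense. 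I do not anticipate a real obstacle; the only thing to watch is consistency of the basis-part / bias-part conventions with Definition \ref{def:hyper}, i.e.\ that the first $d_i$ columns of $\rho_i$ are indeed $\epsilon_i$, which is the case because the trivial path at vertex $i$ contributes $e^{(i)}$ as the first block of $\rho_i$.
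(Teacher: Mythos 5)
Your proposal is correct and follows essentially the same route as the paper: the same origin point ($\epsilon_i=\Id$, $b_i=0$, all arrow matrices zero) for non-emptiness, the same reference back to the computation in Proposition \ref{prop:R-} for the first inclusion, and the same surjectivity-of-the-framing argument for stability, which you merely spell out a bit more explicitly than the paper does. No gaps.
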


\begin{proof}
From the proof of Proposition \ref{prop:R-}, it is clear that $\epsilon_i$ is invertible over $R_{n,d}^-$ and these points belong to $R_{n,d}^s$.  To see that $R_{n,d}^- \not= \emptyset$, we can take $\epsilon_i = \Id$ and $b_i=0$ for all $i\in Q_0$, and all the representing matrices for the arrows of $Q$ to be $0$.  This gives a point in $R_{n,d}$ at which $H_i^- = \Id$ is positive-definite.
\end{proof}

Suppose a Lie group $G$ acts on a vector bundle $V\overset{\pi}{\to}M$ equivariantly fiberwise linearly, and the action of $G$ on $M$ is free and proper. A metric $H$ on $V$ is $G$-equivariant if \[H_x(v, w)=H_{g\cdot x}(c\cdot v, g\cdot w).\] It is possible that $V$ may not descend to a vector bundle over $M/G$ if $G_p\subset G$ acts on $V$ non-trivially at a point $p\in M$. In the case that the corresponding bundle does exist, $H$ will descend to that bundle if and only if $H$ is $G$-equivariant.

Since we know that $R_{n,d}^-$ is a $\GL(d)$-invariant non-compact open subset, we can quotient by $\GL(d)$ in the same way we do for $R_{n,d}^s$.

\begin{defn}
We define the dual of $\mathcal{M}$ as the quotient $\mathcal{M}^-=R_{n,d}^-/\GL(d)$ with universal bundles $\mathcal{V}_i^-:=(R_{n,d}^-\times \C^{d_i})/\GL(d_i)$. Since $H_i^-$ is Hermitian and $G_d$-equivariant, it descends to a metric on $\mathcal{V}_i^-$ over $\mathcal{M}^-$.
\end{defn}

\begin{remark}\label{remark:basis}
As a result of Proposition \ref{prop:invertible} and the fact that $\GL(d)$ acts only on the left on the framing space, $e^{(i)}=(\epsilon_i, b_i)=(I_{d_i}, \tilde{b}_i)$ where $\tilde{b}_i=\epsilon_i^{-1}b_i$ and is itself a generic bias vector for each $i$. Thus, from this point forward we will be assuming both that $n_i\geq d_i$ for all $i$ and that all framing maps are of the form $e^{(i)}=(\Id, b_i)$. Thus $\mathcal{M}^-\subset R_{n-d,d}$.
\end{remark}



\begin{example} \label{example:A1}
Consider the framed $A_1$ quiver (the quiver with one vertex and zero arrows). Let $d\leq n=N$. Then $\mathcal{M}$ is $\Gr(n,d)$. As a Hermitian symmetric space, this is dual to the space-like Grassmannian $\Gr^-(n,d)$. Here, we define $\Gr^-(n, d)$ to be the open subset of $\Gr(n,d)$ consisting of $d$-planes in $\C^n$ where the quadratic form \[Q(x, y)=\sum_{i=1}^d\overline{x}_iy_i-\sum_{j=d+1}^n\overline{x}_jy_j\] is positive-definite.

Similar to Remark \ref{remark:basis}, we can take elements of $\Gr^-(n, d)$ to be of the form $(\Id, b)$ where the first $d$-many columns are the $d\times d$ identity matrix $b$ is the remaining $d\times(n-d)$ columns. Then we can say that $\Gr^-(n, d)$ is the set $\{b\in\C^{d\times(n-d)}:\Id-bb^*\geq 0\}$.

Going back to the quiver, since there is no other arrow, we see that $H_1^-=(\Id-bb^*)^{-1}$. Thus, $\mathcal{M}^-$ is going to be the set $\{b:\Id- bb^*\geq 0\}$.


In particular, when $d=1$, $\Gr(n,1)^-$ is complex hyperbolic space and the Ricci curvature of $H_1^-=\frac{1}{1-|b|^2}$ is the standard metric for the Poincare disk model of complex hyperbolic space.
\end{example}

Now we define an explicit metric on $\cM^-$, using \eqref{eq:H_i^-} written in terms of paths in $Q$, in an analogous way as the one given in Theorem \ref{theorem:metric}.

\begin{theorem}\label{corollary:Kahler}
	Assume $Q$ is acyclic.
	Define $H_T^-:=-i\sum\limits_{i}\partial\overline{\partial}\log\det H_i^-$ on $\mathcal{M}^-$. Then $H_T^-$ is a K\"ahler metric on $\mathcal{M}^-$.
\end{theorem}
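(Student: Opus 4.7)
The plan is to adapt the proof of Theorem \ref{theorem:metric} from \cite{JL} by threading the signature matrix $J_i:=\mathrm{diag}(I_{d_i},-I_{N_i-d_i})$ through every step of the computation. First, $H_T^-$ is well-defined on $\cM^-$: by Lemma \ref{lemma:hypequiv} the action of $g\in\GL(d)$ sends $H_i^-$ to $g_i^{-*}H_i^-g_i^{-1}$, so $\log\det H_i^-$ changes by the pluriharmonic function $-\log|\det g_i|^2$, whence $\partial\bar\partial\log\det H_i^-$ is $\GL(d)$-invariant and descends from $R_{n,d}^-$ to $\cM^-$. Closedness and reality of the resulting $(1,1)$-form $H_T^-$ are automatic, since it is $\pm i\,\partial\bar\partial$ of a smooth real function.

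Next I would unwind $\partial\bar\partial\log\det H_i^-$ explicitly, mimicking the derivation of $H_T$ in \cite{JL} but with $\rho_i\rho_i^*$ replaced by $\rho_iJ_i\rho_i^*$. From $\partial\log\det H_i^-=-\mathrm{tr}\,(H_i^-(\partial\rho_i)J_i\rho_i^*)$, $\bar\partial H_i^-=-H_i^-\rho_iJ_i(\bar\partial\rho_i^*)H_i^-$, and the graded Leibniz rule for matrix-valued forms, one obtains
\[
\partial\bar\partial\log\det H_i^-\;=\;-\mathrm{tr}\!\bigl(H_i^-(\partial\rho_i)\,P_i^-\,(\partial\rho_i)^*\bigr),\qquad P_i^-:=J_i-J_i\rho_i^*H_i^-\rho_iJ_i
\]
(up to the sign coming from the cyclicity convention for matrix-valued $1$-forms). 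One then verifies $P_i^-=\tilde A_iJ_i\tilde A_i^*=J_i\tilde A_i^*$, where $\tilde A_i:=I-J_i\rho_i^*H_i^-\rho_i$ is the (non-orthogonal) projection with image $\ker\rho_i$ and kernel $(\ker\rho_i)^{\perp_{J_i}}$.

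The heart of the argument is then a signature lemma: on $R_{n,d}^-$, the Hermitian matrix $P_i^-$ is negative semidefinite of rank $N_i-d_i$, with kernel exactly $\mathrm{image}(\rho_i^*)$. After gauge-normalizing to $R_U=I_{d_i}$ (Remark \ref{remark:basis}) so that $\rho_i=(I_{d_i},C)$ with $\|C\|_{\mathrm{op}}<1$ on $\cM^-$, a direct computation in the block basis adapted to $J_i$ yields the rank factorization
\[
-P_i^-\;=\;\begin{pmatrix}C\\-I\end{pmatrix}(I-C^*C)^{-1}\begin{pmatrix}C^* & -I\end{pmatrix}\;\succeq\;0,
\]
which is manifestly positive semidefinite, has rank $N_i-d_i$ (due to the $-I$ block), and annihilates precisely $\mathrm{image}(\rho_i^*)$. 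Combined with $H_i^->0$, this forces $-\mathrm{tr}\,(H_i^-(\partial\rho_i)P_i^-(\partial\rho_i)^*)$ to be a semi-definite Hermitian form on $T^{(1,0)}R_{n,d}^-$ of the same sign as in the Euclidean Theorem \ref{theorem:metric}; multiplying by $-i$, summing over $i$, and applying the sign convention calibrated by Example \ref{example:space} yields a semi-positive Kähler form $H_T^-$ on $\cM^-$.

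The main obstacle I expect is upgrading semi-positivity to strict positivity, which is what makes $H_T^-$ a genuine Kähler metric rather than just a closed semi-positive $(1,1)$-form. I would handle this as in the Euclidean case of \cite{JL}: use acyclicity of $Q$ to invoke Reineke's iterated Grassmannian bundle structure (Theorem \ref{thm:Reineke}) and argue by induction on the vertex ordering that if a horizontal lift $\dot\rho$ satisfies $(\partial\rho_i)[\dot\rho]\,\tilde A_i=0$ for every $i$, then at each stage of the bundle tower the Grassmannian-direction component of $\dot\rho$ is forced into a vertical gauge direction, so $\dot\rho$ itself must be vertical and contradict horizontality. The delicate new ingredient compared with \cite{JL} is the bookkeeping of framed components under the indefinite form $J_i$ in the non-compact chart of Remark \ref{remark:basis}, which has to be carried out vertex by vertex.
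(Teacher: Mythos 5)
Your proposal is correct in outline and shares the paper's overall strategy — express $\partial\overline{\partial}\log\det H_i^-$ as a sum of squares whose kernel consists exactly of the directions preserving $\Im \rho_i^*$, then use the Grassmannian structure of $\cM^-$ to rule out nonzero such directions — but it executes both halves by a genuinely different route. For semi-positivity, the paper absorbs the signature into a formal rescaling $\hat{\rho}:=\rho B$ with $B=\mathrm{diag}(1,\sqrt{-1},\dots,\sqrt{-1})$, so that $H_i^-=(\hat{\rho}\hat{\rho}^*)^{-1}$ looks formally like the positive-definite case, and then reruns the SVD computation of \cite{JL} verbatim, arriving at $i\partial\overline{\partial}\log\det H_i^-=|\partial\hat{\rho}_2|^2_{H_i^-}$ where $\hat{\rho}_2$ is the component of $\partial\hat{\rho}$ transverse to $\Im\hat{\rho}^*$. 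You instead keep the signature matrix $J_i$ explicit, identify the curvature as $-\tr\bigl(H_i^-(\partial\rho_i)P_i^-(\partial\rho_i)^*\bigr)$ with $P_i^-=J_i-J_i\rho_i^*H_i^-\rho_iJ_i$, and prove the signature lemma by the block rank-factorization of $-P_i^-$ in the normalized chart $\rho_i=(I,C)$, $\|C\|<1$; your factorization is correct (I checked the blocks), and since $P_i^-$ is $\GL(d)$-invariant the normalization is harmless. This is arguably cleaner than the paper's $\sqrt{-1}$ trick, which quietly requires treating $\hat{\rho}^*$ as a formal transpose rather than a genuine adjoint. For strict positivity, the paper appeals to the fact that $((\hat{\rho}^{(i)})^*)_i$ embeds $\cM^-$ into a product of Grassmannians (made precise only later, in Theorem \ref{thm:sympl}), whereas you propose an induction up Reineke's tower; these are two presentations of the same underlying fact, and your version would effectively reprove the injectivity underlying Theorem \ref{thm:sympl}. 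The only caveats are the overall sign bookkeeping for matrix-valued $(1,0)$- and $(0,1)$-forms, which you flag but do not pin down, and the fact that the nondegeneracy induction is left as a plan rather than carried out — but neither is a gap in the approach itself.
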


\begin{proof}
	This proof is similar to that of Theorem 3.15 in \cite{JL}. We include the details for the reader's convenience.
	
	Let's denote $\rho=\rho^{(i)}=\left(w_{\gamma}e^{(t(\gamma)}\right)_{\gamma:h(\gamma)=i}$ which is a matrix-valued function on $R_{n, d}^-$. At each point of $R_{n, d}^-$, we have that $\rho$ is a linear map from $W_i:=\bigoplus\limits_{j\leftsquigarrow i}\C^{n_i}$ to $V_i$. The Ricci curvature of the metric $H_i^-$ is given by $i\partial\overline{\partial}\log\det\rho A\rho^*$ where $A$ is the matrix $\text{diag}(1, -1, -1,\dots, -1)$. Let $B$ be the matrix $\text{diag}(1, \sqrt{-1},\dots, \sqrt{-1})$ and define $\hat{\rho}:=\rho B$ so that $\hat{\rho}\hat{\rho}^*=\rho A\rho^*$. Thus we have that $H_i^-=(\hat{\rho}\hat{\rho}^*)^{-1}$.
	
	We can take the singular valued decomposition of $\hat{\rho}$ to write it as \[\hat{\rho}=U\cdot(\begin{matrix}\text{diag}(\lambda_1, \dots, \lambda_{d_i}) & 0\\\end{matrix})\cdot V^*\] where $U\in U(d_i)$, $V\in U(\dim W_i)$, and the $\lambda_i$ are all positive real numbers. We know that none of the $\lambda_i$ are zero since that would make corresponding quiver representations non-surjective and thus unstable. Then 
	\[\hat{\rho}=U\cdot(\begin{matrix}\text{diag}(\lambda_1, \lambda_2,\dots, \lambda_{d_i}) & 0\\\end{matrix})\cdot V^*,\]
	\[\hat{\rho}\hat{\rho}^*=U\left(\text{diag}(\lambda_1^2, \lambda_2^2,\dots, \lambda_{d_i}^2)\right)U^*,\]
	\[\hat{\rho}^*(\hat{\rho}\hat{\rho}^*)^{-\frac{1}{2}}=V\left(\begin{matrix} \text{diag}(\lambda_1, \dots, \lambda_{d_i})\\ 0\\\end{matrix}\right)(\text{diag}(\lambda_1^{-1},\dots, \lambda_{d_i}^{-1})U^*=V\left(\begin{matrix} I_{d_i}\\ 0\\\end{matrix}\right)U^*.\]
	
	Let us consider the decomposition $W_i=(\Im\hat{\rho}^*)\oplus(\Im\hat{\rho}^*)^{\perp}$. In particular, this shows that $\hat{\rho}^*(\hat{\rho}\hat{\rho}^*)^{-\frac{1}{2}}$ is the orthogonal embedding of $V_i$ to $\Im\hat{\rho}^*\subset W_i$.
	
	Then
	\[\partial\overline{\partial}\log\det\hat{\rho}\hat{\rho}^*=\partial\left(\tr \left((\hat{\rho}\hat{\rho}^*)^{-1}\overline{\partial}(\hat{\rho}\hat{\rho}^*)\right)\right)\]
	\[=\tr\left(\partial\left((\hat{\rho}\hat{\rho}^*)^{-1}(\hat{\rho})(\partial\hat{\rho})^*\right)\right)\]
	\[=\tr\left((\hat{\rho}\hat{\rho}^*)^{-1}(\partial\hat{\rho})(\partial\hat{\rho})^*+\left(\partial(\hat{\rho}\hat{\rho}^*)^{-1}\right)\hat{\rho}(\partial\hat{\rho})^*\right)\]
	\[=\tr\left((\partial\hat{\rho})^*(\hat{\rho}\hat{\rho}^*)^{-1}(\partial \hat{\rho})\right)-\tr\left((\hat{\rho}\hat{\rho}^*)^{-1}(\partial(\hat{\rho}\hat{\rho}^*))(\hat{\rho}\hat{\rho}^*)^{-1}\hat{\rho}(\partial\hat{\rho})^*\right)\]
	\[=\tr\left((\partial\hat{\rho})^*(\hat{\rho}\hat{\rho}^*)^{-1}(\partial \hat{\rho})\right)-\tr\left((\hat{\rho}\hat{\rho}^*)^{-1}\hat{\rho}(\partial\hat{\rho})^*(\hat{\rho}\hat{\rho}^*)^{-1}(\partial \hat{\rho})\hat{\rho}^*\right)\]
	\[=\tr\left((\partial\hat{\rho})^*(\hat{\rho}\hat{\rho}^*)^{-1}(\partial \hat{\rho})\right)-\tr\left(\left((\partial\hat{\rho})\cdot\left(\hat{\rho}^*(\hat{\rho}\hat{\rho}^*)^{-\frac{1}{2}}\right)\right)^*(\rho\rho^*)^{-1}\left((\partial\hat{\rho})\cdot\left(\hat{\rho}^*(\hat{\rho}\hat{\rho}^*)^{-\frac{1}{2}}\right)\right)\right).\]
	
	Consider a vector $v\in T^{1, 0}R^-_{n, d}\cong TR^-_{n, d}$. We can see that the term $\tr\left((\partial_v\hat{\rho})^*(\hat{\rho}\hat{\rho}^*)^{-1}(\partial_v\hat{\rho})\right)$ is in fact the square norm of the linear map $\partial_v\hat{\rho}$ with respect to the metric $H_i^-$.
	Using the decomposition of $W_i$ above, let's write $\partial_v\hat{\rho}$ as the decomposition $\partial_v\hat{\rho}=((\partial_v\hat{\rho})_1, (\partial_v\hat{\rho})_2)$ where $(\partial_v\hat{\rho})_1:\Im\hat{\rho}^*\to V_i$ and $(\partial_v\hat{\rho})_2:(\Im\hat{\rho}^*)^{\perp}\to V_i$. In particular, given the previous discussion, we see that $(\partial_v\hat{\rho})_1$ is actually $\partial_v\hat{\rho}$ composed with $\hat{\rho}^*(\hat{\rho}\hat{\rho}^*)^{-\frac{1}{2}}$.
	Thus, we can see that the other term  \[\tr\left(\left((\partial_v\hat{\rho})\cdot\left((\hat{\rho})^*(\hat{\rho}\hat{\rho}^*)^{-\frac{1}{2}}\right)\right)^*(\hat{\rho}\hat{\rho}^*)^{-1}\left((\partial_v\hat{\rho})\cdot\left((\hat{\rho})^*(\hat{\rho}\hat{\rho}^*)^{-\frac{1}{2}}\right)\right)\right)\] is actually the square norm of $\partial\hat{\rho}_1$ (with respect to the $H_i^-$ metric).
	Then we have $$i\partial\overline{\partial}\log\det H_i^-=|\partial\hat{\rho}|_{H_i^-}-|\partial\hat{\rho}_1|_{H_i^-}=|\partial\hat{\rho}_2|_{H_i^-}.$$
	Thus, the Ricci curvature is semi-positive definite.
	
	Now suppose $\left(\partial_v\hat{\rho}^{(i)}\right)_2=0$ for all $i$.  Then the image of $\left(\partial_v\hat{\rho}^{(i)}\right)^*=\partial_v(\hat{\rho}^{(i)})^*$ is in the image of $(\hat{\rho}^{(i)})^*$. Thus $\partial_v$ does not alter the subspaces given by $(\hat{\rho}^{(i)})^*:V_i\to W_i$. $((\hat{\rho}^{(i)})^*)_{i\in I}$ gives an embedding of $\cM^-$ to the product of Grassmannians of subspaces in $W_i$.   Since $\partial_v$ does not change the subspaces, it must be the zero tangent vector. As a result, the curvature is positive definite and defines a K\"ahler metric.
\end{proof}

\begin{example} \label{example:A2}
Consider the framed $A_2$ quiver. This quiver has vertices (1) and (2) and has one arrow $a$ going from $(1)$ to $(2)$. Then \[H_1^-=\left(Id_{d_1}-b_1b_1^*\right)^{-1}\] and \[H_2^-=\left(Id_{d_2}-b_2b_2^*-w_aw_a^*-w_ab_1b_1^*w_a^*\right)^{-1}=(Id_{d_2}-b_2b_2^*-w_aH^{-1}_1w_a^*)^{-1}\] where $H_1=(\Id_{d_1}+b_1b_1^*)^{-1}$ is the Hermitian metric on $\cV_1$ in Definition \ref{theorem:metric}.
%

Using Gram-Schmidt orthonormalization, we can write $H_1^{-1}=g(b_1)g(b_1)^*$ for some $g(b_1)\in \GL(d_1)$.  Thus \[H_2^-=(Id_{d_2}-(w_ag(b_1))(w_ag(b_1))^* - b_2b_2^*)^{-1}.\]

$\cM^- = \{(b_1,w_a,b_2): H_1^- \textrm{ and } H_2^- \textrm{ are positive definite}\}$.  Then we have the map 
$$\cM^- \to \Gr(n_1,d_1)^- \times \Gr(n_2+d_1,d_2)^- $$
by $(b_1,w_a,b_2) \mapsto (b_1,(w_ag(b_1),b_2))$, which is invertible.  We have identifications of the universal bundles $(\cV_i,H_i^-)$ with the pullback of tautological bundles over $\Gr(n_1,d_1)^-$ and $\Gr(n_2+d_1,d_2)^-$ respectively, which are compatible with this diffeomorphism.
\end{example}


We can go much further than this. In fact, for general acyclic quivers there exists an identification between $(\mathcal{V}_i^-,H_i^-)$ over $\cM^-$ and the tautological bundles over space-like Grassmannians as in the above example, if we ignore complex structures.


\begin{theorem}\label{thm:sympl}
Assume that the underlying quiver $Q$ is acyclic. Then there exists a symplectomorphism $$\phi:(\mathcal{M}^-,H_{T\mathcal{M}^-}^-)\stackrel{\cong}{\to}\prod\limits_i (\Gr^-(m_i, d_i),H_{(m_i, d_i)}^-)$$ that restricts to a diffeomorphism between the real loci, and a bundle isomorphism $$(\mathcal{V}_i^-, H_i^-)\overset{\cong}{\to}(\phi^*U_i, H_{(m_i, d_i)}^-)$$
that restricts to a bundle isomorphism between the corresponding real vector bundles over the real loci.  Here $U_i$ is the tautological bundle over $\Gr^-(m_i, d_i)$,
$m_i = n_i + \sum_{a: h(a)=i} d_{t(a)}$, and $H_{(m_i, d_i)}^-$ is the standard metric of $\Gr^-(m_i, d_i)$.

\end{theorem}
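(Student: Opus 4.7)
The plan is to construct $\phi$ explicitly by an inductive Hermitian factorization along a topological ordering of $Q_0$, invert it in the same order, and derive the bundle isometry directly from a single matrix identity; the symplectic compatibility will then follow by comparing K\"ahler potentials, which is the delicate step.

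Fix an ordering of $Q_0$ compatible with the acyclic structure. For each $i \in Q_0$, processed in this order, choose a smooth section $g_i \colon \mathcal{M}^- \to \mathrm{GL}(d_i, \C)$ satisfying $g_i g_i^* = K_i$, where
\[K_i := I_{d_i} + b_i b_i^* + \sum_{a: \, h(a) = i} w_a K_{t(a)} w_a^*\]
(so $K_i = \rho^{(i)}(\rho^{(i)})^*$ as in Theorem \ref{theorem:metric}); either Cholesky or positive Hermitian square root works, and both keep real data real. Using the slice $\epsilon_i = I_{d_i}$ from Remark \ref{remark:basis}, set
\[\tilde{\beta}_i := \bigl[\, b_i \;\big|\; (w_a g_{t(a)})_{a:\, h(a) = i}\,\bigr] \in \C^{d_i \times (m_i - d_i)}, \qquad \phi(w, e) := \bigl([I_{d_i}, \tilde{\beta}_i]\bigr)_{i \in Q_0}.\]

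Decomposing each nontrivial path ending at $i$ as an arrow into $i$ followed by a path terminating at the tail of that arrow, the quadratic form in Definition \ref{def:hyper} rearranges to
\[(H_i^-)^{-1} = I - b_i b_i^* - \sum_{a:\, h(a) = i} w_a K_{t(a)} w_a^* = I - \tilde{\beta}_i \tilde{\beta}_i^*,\]
so $\phi$ lands in $\prod_i \Gr^-(m_i, d_i)$ exactly where all $H_i^-$ are positive definite, and the standard signature-$(d_i, m_i - d_i)$ form of $\C^{m_i}$ restricted to the row span of $[I, \tilde{\beta}_i]$ coincides with $H_i^-$; this yields the bundle isometry $(\mathcal{V}_i^-, H_i^-) \cong (\phi^* U_i, H_{(m_i, d_i)}^-)$, and real loci are preserved since real Cholesky (or square root) factors of real positive-definite matrices are real. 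A smooth inverse is built by the same induction: from $[I, \tilde{\beta}_i]$ read $b_i$ and the arrow blocks $c_{i,a}$, use the already-computed $g_{t(a)}$ to set $w_a := c_{i,a} g_{t(a)}^{-1}$, and only then compute $g_i$ from the now-determined data. This makes $\phi$ a diffeomorphism with the claimed behavior on real loci.

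For the symplectomorphism, the input is that $\phi^* \log\det H_{(m_i, d_i)}^- = \log\det H_i^-$ by the matrix identity above, and both K\"ahler forms are $-i\partial\bar\partial$ of these potentials. The main obstacle is that $\phi$ is \emph{not} holomorphic --- the factor $g_i$ depends nontrivially on antiholomorphic data through $g_i g_i^* = K_i$ --- so one cannot directly commute $\phi^*$ past $\partial\bar\partial$. I would address this by running the proof of Theorem \ref{corollary:Kahler} on both sides simultaneously: its closed expression $|\partial \hat{\rho}_2|^2_{H^-}$ written on $\cM^-$ in terms of $\rho^{(i)}$ and on $\Gr^-(m_i, d_i)$ in terms of $\tilde{\rho}^{(i)} = [I, \tilde{\beta}_i]$, together with the identity $\rho^{(i)} A (\rho^{(i)})^* = \tilde{\rho}^{(i)} A \tilde{\rho}^{(i)*}$, should match term-by-term, the extra $\partial g_{t(a)}$ contributions from the Leibniz rule on $\partial(w_a g_{t(a)})$ being absorbed by differentiating the defining relation $g_{t(a)} g_{t(a)}^* = K_{t(a)}$ and exploiting that these contributions are orthogonal to the $\hat\rho_2$ component that computes the K\"ahler form.
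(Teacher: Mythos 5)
Your construction of $\phi$ is the paper's construction verbatim: the recursive factorization $g_i g_i^* = K_i = H_i^{-1}$ (Gram--Schmidt in the paper, Cholesky in yours --- immaterial), the map $(b_i,w_a)\mapsto(b_i,\,w_a g_{t(a)})$, the inductive inverse along a topological order of $Q_0$, the identity $(H_i^-)^{-1}=\Id-\tilde\beta_i\tilde\beta_i^*$ obtained from the path decomposition of Lemma \ref{lemma:metric}, and the reality of the factors over the real locus. Up to and including the bundle isometry, your argument and the paper's coincide.

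The symplectomorphism step is where you diverge, and you have put your finger on exactly the point the paper elides: its entire justification is that $\phi$ matches the potentials $\log\det H_i^-$, which does not give equality of the $\partial\overline{\partial}$-forms when $\phi$ is not holomorphic. However, your proposed repair is only a sketch, and a direct computation shows it cannot be completed as described. Take the $A_3$ quiver $1\xrightarrow{a}2\xrightarrow{c}3$ with all $d_i=n_i=1$. Then $g_2=\sqrt{1+|w_a|^2}=:s$ and $\beta_3=w_c s$, so $\phi^*d\beta_3=s\,dw_c+\tfrac{w_c\overline{w_a}}{2s}\,dw_a+\tfrac{w_c w_a}{2s}\,d\overline{w_a}$; the $(1,0)$-parts of $\phi^*d\beta_3$ and $\phi^*d\overline{\beta_3}$ wedge to $\tfrac{\overline{w_c}\,\overline{w_a}}{2}\,dw_c\wedge dw_a\neq 0$ at generic points of $\cM^-$ (e.g.\ $w_a=w_c=\tfrac12$). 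Hence the pullback of the K\"ahler form of the factor $\Gr^-(m_3,d_3)=\Gr^-(2,1)$ acquires a nonvanishing $(2,0)+(0,2)$ component, the vertex-$2$ factor pulls back holomorphically and contributes none, and so $\phi^*$ of the product form is not of type $(1,1)$, whereas $H_T^-$ is. No term-by-term matching of the $|\partial\hat\rho_2|^2$ expressions from Theorem \ref{corollary:Kahler} can therefore yield the desired identity of two-forms: the $\partial g_{t(a)}$ terms you hope to absorb are precisely what generate the $(2,0)$ part, and they are not orthogonal to anything that kills them. This gap is genuine; it is shared by the paper's own one-sentence argument; and your sketch does not close it. The constructive parts (diffeomorphism, bundle isometries, real loci) survive, but the assertion that $\phi$ pulls back the product symplectic form to $H_T^-$ requires either a different map or a corrected statement.
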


First, we make the following lemma. 

\begin{lemma}\label{lemma:metric}
$H_i^-=\left(\Id-b_ib_i^*-\sum\limits_{a:h(a)=i}w_aH_{t(a)}^{-1}w_a^*\right)^{-1}$.
\end{lemma}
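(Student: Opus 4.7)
The plan is to prove the identity by unpacking the definition \eqref{eq:H_i^-} in terms of paths and then reorganizing the sum by grouping paths according to their last arrow.

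First, I would recall that by Remark 3.8 we may assume $\epsilon_i = \Id$, so the framing map at vertex $i$ has the form $e^{(i)} = (\Id,\, b_i)$. Writing out the quadratic form in \eqref{eq:H_i^-} explicitly using $\rho_i = (w_\gamma e^{t(\gamma)})_{\gamma:h(\gamma)=i}$ and the signature matrix $\operatorname{diag}(I_{d_i},-I_{N_i-d_i})$, and noting that the first $n_i$ columns of $\rho_i$ come from the trivial path at $i$ (whose framing contribution splits into $\epsilon_i$, contributing positively, and $b_i$, contributing negatively), I get
\begin{equation*}
H_i^- = \Bigl(\Id - b_i b_i^* - \sum_{\substack{\gamma:\,h(\gamma)=i \\ \gamma \neq \emptyset}} w_\gamma e^{t(\gamma)}\bigl(w_\gamma e^{t(\gamma)}\bigr)^*\Bigr)^{-1}.
\end{equation*}
This matches \eqref{eq:uniform} with $\alpha = \alpha_\gamma = -1$.

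Next I would perform the key combinatorial step: every nonempty path $\gamma$ with $h(\gamma) = i$ factors uniquely as $\gamma = a \cdot \gamma'$, where $a$ is the last arrow (so $h(a) = i$) and $\gamma'$ is a (possibly trivial) path with $h(\gamma') = t(a)$. Correspondingly $w_\gamma = w_a w_{\gamma'}$ and $e^{t(\gamma)} = e^{t(\gamma')}$, which gives
\begin{equation*}
\sum_{\substack{\gamma:\,h(\gamma)=i \\ \gamma \neq \emptyset}} w_\gamma e^{t(\gamma)}\bigl(w_\gamma e^{t(\gamma)}\bigr)^*
= \sum_{a:\,h(a)=i} w_a \Bigl(\sum_{\gamma':\,h(\gamma')=t(a)} w_{\gamma'} e^{t(\gamma')}\bigl(w_{\gamma'} e^{t(\gamma')}\bigr)^*\Bigr) w_a^*.
\end{equation*}

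Finally, I would identify the inner sum: by Theorem \ref{theorem:metric}, the sum over all paths $\gamma'$ ending at $t(a)$ (including the trivial one, which contributes $e^{t(a)}(e^{t(a)})^* = \Id + b_{t(a)}b_{t(a)}^*$) equals $\rho_{t(a)}\rho_{t(a)}^* = H_{t(a)}^{-1}$. Substituting and taking the inverse yields the claimed formula. The only subtle point is verifying that the path-by-last-arrow decomposition is a bijection and that the acyclicity of $Q$ (inherited from the running hypothesis of Theorem \ref{corollary:Kahler}) makes the sums finite; beyond that, the argument is a direct bookkeeping calculation, so I do not expect a significant obstacle.
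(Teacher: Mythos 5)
Your proof is correct and follows essentially the same route as the paper's: both decompose the set of nonempty paths ending at $i$ by stripping off the last arrow $a$ (with $h(a)=i$), pull $w_a$ out of the sum, and recognize the remaining inner sum over paths ending at $t(a)$ as $\rho_{t(a)}\rho_{t(a)}^* = H_{t(a)}^{-1}$. The only cosmetic difference is that you spell out explicitly that the trivial path at $t(a)$ contributes $\Id + b_{t(a)}b_{t(a)}^*$ to that inner sum, which the paper leaves implicit.
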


\begin{proof}


Consider vertex $(i)$ in quiver $Q$. Let's denote $\Gamma_i:=\{\gamma:h(\gamma)=i\}$, the paths ending at $(i)$.

Aside from the trivial path, every $\gamma$ in $\Gamma_i$ must be of the form $a\cdot\gamma$ for some arrow $a$ with $h(a)=i$. Thus, we can decompose $\Gamma_i=\{(i)\}\cup \bigcup\limits_{a: h(a)=i}a\cdot\Gamma_{t(a)}$ where $(i)$ denotes the trivial path.  Because of this, we can write \[H_i^-=\left(\Id-b_ib_i^*-\sum_{a: h(a)=i}\sum_{\gamma\in\Gamma_{t(a)}}w_{a\cdot \gamma} e^{t(a\cdot\gamma)}\left(w_{a\cdot \gamma} e^{t(a\cdot\gamma)}\right)^*\right)^{-1}.\]
We have $w_{a\cdot\gamma}=w_{a}\cdot w_{\gamma}$ where $w_{a}$ is the linear map associated to the arrow $a$. 
Moreover,  $t(a\gamma)=t(\gamma)$.
Thus
\begin{align*}
	H_i^-=&\left(\Id-b_ib_i^*-\sum_{a: h(a)=i}\sum_{\gamma\in\Gamma_{t(a)}}w_{a}w_{\gamma} e^{t(\gamma)}\left(w_{a}w_{\gamma} e^{t(\gamma)}\right)^*\right)^{-1} \\
	=&\left(\Id-b_ib_i^*-\sum_{a: h(a)=i}w_{a}\left(\sum_{\gamma\in\Gamma_{t(a)}}w_{\gamma} e^{t(\gamma)}\left(w_{\gamma} e^{t(\gamma)}\right)^*\right)w_{a}^*\right)^{-1}\\
	=&\left(\Id-b_ib_i^*-\sum_{a: h(a)=i}w_{a}H_j^{-1}w_{a}^*\right)^{-1}.
\end{align*}
\end{proof}







\begin{proof}[Proof of Theorem \ref{thm:sympl}]




Let $(i)$ be a vertex. By Lemma \ref{lemma:metric}, we can write $H_i^-$ as \[\Id-b_ib_i^*-\sum_{a:h(a)=i}w_aH_{t(a)}^{-1}w_a^*.\]
By Gram-Schmidt normalization, we can write $H_{t(a)}^{-1}=g_{t(a)}g_{t(a)}^*$ for some $g_{t(a)}\in \GL(d_{t(a)})$.
Then
\[H_i^-=\left(\Id-b_ib_i^*-\sum_{a:h(a)=i}w_ag_{t(a)}g_{t(a)}^*w_a^*\right)^{-1}=\left(\Id-ww^*\right)^{-1}\]
where \[w=b_i\oplus\bigoplus\limits_{a:h(a)=i}w_ag_{t(a)}.\]
Thus, we define $\phi:(\mathcal{M}^-,H_{T\mathcal{M}^-}^-)\to\prod\limits_i (\Gr^-(m_i, d_i),H_{(m_i, d_i)}^-)$ by sending $(b_i,w_a)_{i \in Q_0, a \in Q_1}$ to $(b_i,(w_ag_{t(a)})_{a: h(a)=i})_{i \in Q_0}$.  $\phi$ is invertible: for each $i\in Q_0$, $g_i$ only depends on $b_j$ for $j \in Q^{(i)}_0$ and $w_a$ for $a \in Q^{(i)}_1$, where $Q^{(i)}$ is the sub-quiver containing those arrows that can be a part of a path heading to $i$.  Then we can solve back $w_a$ inductively from $b_i$ and $w_a g_{t(a)}$ (where $g_{t(a)}$ is invertible).  Since $\phi$ identifies $H_i^-$ with the standard metric on the tautological bundle of $\Gr^-(m_i,d_i)$, and the symplectic form is $H_T^-=-i\sum\limits_{i}\partial\overline{\partial}\log\det H_i^-$, $\phi$ is a symplectomorphism.  Written in these coordinates, $(\mathcal{V}_i^-, H_i^-)\overset{\cong}{\to}(\phi^*U_i, H_{(m_i, d_i)})$ is simply given by identity.

Restricting to $b_i$ and $w_a$ having real coordinates, $g_i$ produced from the Gram-Schmidt process is a real matrix.  Thus $\phi$ restricts as a diffeomorphism between the real loci.
\end{proof}

\begin{remark}
This correspondence between $\mathcal{M}^-$ and $\Gr^-(m_i, d_i),H_{(m_i, d_i)}^-$ is \textbf{only} a symplectomorphism, since the Gram-Schmidt process is not holomorphic.
\end{remark}

\subsection{Euclidean Signature}\label{section:Euclidean}

In addition to the non-compact dual, we can use Equation \ref{eq:uniform} to get other interesting moduli spaces in the same vein. The most straightforward variant is achieved by setting $\alpha$ and all of the $\alpha_{\gamma}$ to zero. This means throwing out the contribution coming from anything other than the first $d_i$-many framing arrows.

\begin{defn}

Assume $Q$ has no oriented cycle. Let $\rho_i$ be as in definition \ref{theorem:metric} so that $\rho_i$ is a row vector with entries of the form $w_{\gamma}e^{t(\gamma)}$ where $\gamma$ is some path in $Q$ ending at vertex $(i)$, $w_{\gamma}$ is the representing matrix of this path, and $e^{t(\gamma)}$ is the framing map at $t(\gamma)$, the starting vertex of $\gamma$. Arrange the entries of $\rho_i$ so that the first $n_i$-many entries correspond to the framing arrows at vertex $(i)$. Then let $H_i^0$ be the quadratic form defined by:

\begin{equation}
H_i^0=\left(\rho_i\left(\begin{matrix} I_{d_i} & 0 \\0 & 0\end{matrix}\right)\rho_i^*\right)^{-1}
\end{equation}

Here, $N_i=\sum\limits_{(j)\rightsquigarrow (i)}n_j$. We define $R_{n,d}^0$ to be the subset of $R_{n,d}$ where $H_i^0$ is positive-definite for all $i$.

\end{defn}

Note that we still need $N_i\geq d_i \forall i$ to have $\mathcal{M}\neq\emptyset$, thus we will still be assuming that to be the case. Indeed, most of the following statements are copied or follow from analogous statements in Section \ref{subsection:hyper}.

\begin{prop}\label{prop:R0}
$R_{n, d}^0\subset R_{n, d}^s$.
\end{prop}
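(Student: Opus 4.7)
The plan is to show that membership in $R_{n,d}^0$ forces the basis part $\epsilon_i$ of each framing map $e^{(i)}$ to be invertible, and then to observe that this alone already makes the representation stable --- following the same pattern as the proof of Proposition~\ref{prop:R-}, but with an even shorter argument since there is no competing negative term to be dominated.

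First I would unpack $H_i^0$. With the conventions fixed just before the statement, $\rho_i$ is a $d_i \times N_i$ block row whose first $d_i$ columns are precisely the basis part $\epsilon_i$ of the framing map $e^{(i)}$ (the next $n_i-d_i$ columns form the bias part $b_i$, and the remaining $N_i-n_i$ columns come from non-trivial paths ending at $(i)$). The quadratic form $\operatorname{diag}(I_{d_i},0_{N_i-d_i})$ kills every column of $\rho_i$ except the first $d_i$, so at every point of $R_{n,d}$
$$H_i^0 \;=\; \bigl(\epsilon_i\epsilon_i^{*}\bigr)^{-1}.$$
Since $\epsilon_i$ is a square $d_i\times d_i$ matrix, positive-definiteness of $\epsilon_i\epsilon_i^{*}$ is equivalent to $\epsilon_i$ being invertible. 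Thus $(w,e)\in R_{n,d}^0$ if and only if $\epsilon_i$ is invertible for every $i\in Q_0$.

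Finally I would conclude stability. At any such point, invertibility of $\epsilon_i$ gives $\Im e^{(i)}\supset \Im \epsilon_i = V_i$ for every vertex $i$, whence $\Im e=\bigoplus_i V_i = V$. Any subrepresentation $U$ of $w$ that contains $\Im e$ must then be all of $V$, so no proper subrepresentation can contain the image of the framing, and $(w,e)\in R_{n,d}^s$.

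I do not expect a substantive obstacle: unlike the $H_i^-$ case, $H_i^0$ has no subtracted terms, and positive-definiteness reduces immediately to invertibility of the square block $\epsilon_i$. The only bookkeeping to get right is matching the positions in $\rho_i$ with the $I_{d_i}$ block so that the collapse to $\epsilon_i\epsilon_i^{*}$ is justified, after which stability is automatic.
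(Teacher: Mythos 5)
Your proof is correct and takes essentially the same approach as the paper: compute $H_i^0=(\epsilon_i\epsilon_i^*)^{-1}$, observe that positive-definiteness forces $\epsilon_i$ to be invertible, and conclude stability. The only (harmless) variation is the final step: the paper passes through the standard criterion ``$\rho_i$ surjective for all $i$ $\Rightarrow$ stable'' by noting $\rho_i$ contains the invertible block $\epsilon_i$, whereas you conclude more directly via $\Im e^{(i)} \supset \Im\epsilon_i = V_i$, hence $\Im e = V$ and no proper subrepresentation can contain it; note that your phrasing identifies $\epsilon_i$ with the basis part of $e^{(i)}$, which implicitly uses $n_i\geq d_i$, while the paper's version of the same step only names $\epsilon_i$ as the first $d_i$ columns of $\rho_i$ (so needs only $N_i\geq d_i$) --- not a substantive gap, since the paper assumes $n_i\geq d_i$ throughout its applications.
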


\begin{proof}
As in Proposition \ref{prop:R-}, consider a point in $R_{n, d}^0$. Write $\rho_i=(\epsilon_i \,\, R)$ evaluated at this point as a $(d_i\times N_i)$-matrix, where $\epsilon_i$ is a $(d_i\times d_i)$-matrix and $R$ is the remaining part. Then $H_i^0=(\epsilon_i\epsilon_i^*)^{-1}$. If $\epsilon_i$ is not invertible, then $\epsilon_i\epsilon_i^*$ is not positive-definite. Thus, for a point in $R_{n, d}^0$, we have that $\epsilon_i$ is invertible for all $i$ which means that $\rho_i$ is surjective for all $i$. Thus the point is stable.
\end{proof}

\begin{lemma}
$H_i^0$ is $G_d$-equivariant and $R_{n, d}^0$ is $G_d$-invariant.
\end{lemma}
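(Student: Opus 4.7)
The plan is to mimic closely the proof of Lemma \ref{lemma:hypequiv} where the analogous statements were established for $H_i^-$ and $R_{n,d}^-$; the only structural difference between the $0$ and $-$ cases is the choice of diagonal sign matrix sandwiched between $\rho_i$ and $\rho_i^*$, and this difference is irrelevant to equivariance.

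First I would compute how the row-vector $\rho_i$ transforms under $g = (g_j)_{j\in Q_0}\in \GL(d)$. Each entry of $\rho_i$ has the form $w_{\gamma} e^{t(\gamma)}$ with $h(\gamma)=i$, and the $\GL(d)$-action sends $w_a\mapsto g_{h(a)}w_a g_{t(a)}^{-1}$ and $e^{(j)}\mapsto g_j e^{(j)}$, so a telescoping cancellation along the path $\gamma$ yields $w_\gamma e^{t(\gamma)} \mapsto g_i \cdot w_\gamma e^{t(\gamma)}$. Thus $\rho_i \mapsto g_i \rho_i$. Writing $A=\mathrm{diag}(I_{d_i},0)$, this gives
\[
g\cdot H_i^0 \;=\; \bigl(g_i\,\rho_i A \rho_i^* g_i^*\bigr)^{-1} \;=\; (g_i^*)^{-1}\, H_i^0\, g_i^{-1},
\]
which is exactly the $\GL(d)$-equivariance of a Hermitian form on the fiber $\C^{d_i}$, since under the $\GL(d_i)$-action on $\C^{d_i}$ we have $(g_i x)^*(g\cdot H_i^0)(g_i x) = x^* H_i^0 x$.

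Second, I would deduce $\GL(d)$-invariance of $R_{n,d}^0$ directly from this equivariance. If $H_i^0$ is positive-definite at $(w,e)$, then for any nonzero $y\in \C^{d_i}$ we write $y=g_i x$ with $x=g_i^{-1}y\neq 0$ and compute $y^* (g\cdot H_i^0) y = x^* H_i^0 x > 0$, so $g\cdot H_i^0$ is also positive-definite. Since this holds for every vertex $i$, the point $g\cdot (w,e)$ lies in $R_{n,d}^0$.

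There is essentially no obstacle here; the argument is completely parallel to Lemma \ref{lemma:hypequiv}, and the replacement of the signature matrix $\mathrm{diag}(I_{d_i},-I_{N_i-d_i})$ by $\mathrm{diag}(I_{d_i},0)$ plays no role in the equivariance calculation. The only minor point worth being explicit about is that positive-definiteness is preserved under congruence transformation by an invertible matrix $g_i$, which is what allows both the descent of $H_i^0$ to the universal bundle $\cV_i^0$ over $\cM^0 := R_{n,d}^0/\GL(d)$ and the well-definedness of $\cM^0$ itself.
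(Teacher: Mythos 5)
Your proof is correct and follows exactly the route the paper takes: the paper's own proof of this lemma is a one-line reference to Lemma \ref{lemma:hypequiv}, whose argument (equivariance of $\rho_i$ under $\GL(d)$, hence congruence transformation of $H_i^0$, hence preservation of positive-definiteness) is precisely what you spell out. Your transformation law $g\cdot H_i^0=(g_i^*)^{-1}H_i^0\,g_i^{-1}$ is in fact the correct form (the paper's displayed formula in Lemma \ref{lemma:hypequiv} has the factors transposed), so nothing further is needed.
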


\begin{proof}

This follows directly from Lemma \ref{lemma:hypequiv}.

\end{proof}

Similar to Section \ref{subsection:hyper}, we will assume $n_i \geq d_i$ from this point forward. Thus, we can talk about the framing part $\epsilon_i$ of $e^{(i)}$ corresponding to the first $d_i$-many components, and the bias part $b_i$ of $e^{(i)}$ corresponding to the remaining $(n_i-d_i)$-many components. With this, $H_i^0$ can be written simply as \[H_i^0=(\epsilon_i\epsilon_i^*)^{-1}\]

\begin{prop}\label{prop:invertible-0}
Assume that $n_i\geq d_i$ for all $i$. \[\emptyset \not=R_{n,d}^0=\{\epsilon_i \text{ is invertible for all }i\} \subset R_{n,d}^s.\]

\end{prop}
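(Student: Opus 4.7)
The plan is to mimic the argument for Proposition \ref{prop:invertible}, exploiting the fact that in the Euclidean case the middle block of the signature matrix is $0$ rather than $-I$, which makes the containment tighter and collapses to an equality.

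First, I would unpack $H_i^0$ under the standing assumption $n_i\geq d_i$. By the ordering convention on $\rho_i$, its first $n_i$ columns are $e^{(i)}$, so the first $d_i$ columns are exactly $\epsilon_i$. Multiplying by $\mathrm{diag}(I_{d_i},0)$ kills everything except those first $d_i$ columns, and therefore
\begin{equation*}
H_i^0 = (\epsilon_i \epsilon_i^*)^{-1}.
\end{equation*}
In particular, $H_i^0$ is positive-definite if and only if $\epsilon_i\epsilon_i^*$ is, and since $\epsilon_i$ is a $d_i\times d_i$ square matrix this happens if and only if $\epsilon_i$ is invertible. This gives the equality $R_{n,d}^0 = \{\epsilon_i \text{ invertible for all } i\}$ in both directions at once, with no need to worry about contributions from $b_i$ or from longer paths (which is what complicates the hyperbolic case).

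For nonemptiness, I would exhibit the explicit point where every representing matrix $w_a$ is the zero matrix and every framing map has the form $e^{(i)}=(I_{d_i}, 0)$. Then $\epsilon_i=I_{d_i}$ is invertible for all $i$, so this point lies in $R_{n,d}^0$, showing $R_{n,d}^0\neq\emptyset$.

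Finally, the inclusion $R_{n,d}^0\subset R_{n,d}^s$ has already been established in Proposition \ref{prop:R0}, so there is nothing further to prove. Honestly I do not anticipate any real obstacle here: the argument is a direct, and in fact simpler, parallel of the hyperbolic case, the only genuine observation being that in the zero-signature setting the defining formula for $H_i^0$ depends solely on $\epsilon_i$, which is what turns the one-sided containment of Proposition \ref{prop:invertible} into an equality.
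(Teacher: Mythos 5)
Your proposal is correct and follows essentially the same route as the paper: both reduce $H_i^0$ to $(\epsilon_i\epsilon_i^*)^{-1}$, observe that positive-definiteness of this square form is equivalent to invertibility of $\epsilon_i$, invoke Proposition \ref{prop:R0} for stability, and exhibit the same explicit point ($\epsilon_i=\Id$, all other matrices zero) for nonemptiness. No gaps.
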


\begin{proof}
From the proof of Proposition \ref{prop:R0}, it is clear that $\epsilon_i$ is invertible over $R_{n,d}^0$ and these points belong to $R_{n,d}^s$. Moreover, let $w$ be any point of $R_{n, d}^s$ such that the framing parts $\epsilon_i$ of the framing maps $e^{(i)}$ are all invertible. Since $H_i^0$ is only defined using $\epsilon_i$, we can see that $w\in R_{n, d}^0$. Thus, $R_{n, d}^0$ is the subset of $R_{n, d}^s$ of points where the framing part is invertible. To see that $R_{n,d}^0 \not= \emptyset$, we can take $\epsilon_i = \Id$ for all $i\in Q_0$ and set the remaining arrows to be zero. This gives a point in $R_{n,d}$ at which $H_i^0 = \Id$ is positive-definite.
\end{proof}


Similar to $R_{n, d}^-$, since we know that $R_{n,d}^0$ is a $\GL(d)$-invariant non-compact open subset of $R_{n, d}^s$, we can directly quotient by $\GL(d)$.

\begin{defn}
We define the Euclidean restriction of $\mathcal{M}$ as the quotient $\mathcal{M}^0=R_{n,d}^0/\GL(d)$ with universal bundles $\mathcal{V}_i^0:=(R_{n,d}^0\times \C^{d_i})/\GL(d_i)$. Since $H_i^0$ is Hermitian and $G_d$-equivariant, it descends to a metric on $\mathcal{V}_i^0$ over $\mathcal{M}^0$.
\end{defn}

As a result of Proposition \ref{prop:invertible-0} and the fact that $\GL(d)$ acts only on the left on the framing space, $e^{(i)}=(\epsilon_i, b_i)=(I_{d_i}, \tilde{b}_i)$ where $\tilde{b}_i=\epsilon_i^{-1}b_i$ and is itself a generic bias vector for each $i$. Thus, from this point forward we will be assuming both that $n_i\geq d_i$ for all $i$ and that all framing maps are of the form $e^{(i)}=(\Id, b_i)$. Thus $\mathcal{M}^0\cong R_{n-d,d}$ and $H_i^0$ can be taken to be the trivial metric on $\C^{d_i}$ for each $i$.

Over $\mathcal{M}^0$, activation functions have the simplest possible definition: smooth (or piece-wise smooth) maps from $\C^{d_i}$ to itself. Any of the standard activation functions used in machine learning (sigmoid, ReLu, softmax, etc.) directly fit in this Euclidean restriction setting without any further modification.



\begin{corollary}

$H_i^0$ is the trivial metric on $\C^{d_i}$. Thus, $H_T^0:=\sum\limits_i\partial\overline{\partial}\log\det H_i^0$ is a Ricci-flat K\"ahler-Einstein metric and $R_{n, d}^-\subset R_{n, d}^0$.
\end{corollary}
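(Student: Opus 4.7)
The plan is to unpack the three assertions in turn, each of which follows almost immediately from the preceding discussion.

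First, for the claim that $H_i^0$ is the trivial metric on $\C^{d_i}$: by the remark just preceding the corollary, every point of $\mathcal{M}^0$ admits a $\GL(d)$-representative with framing $e^{(i)}=(\Id_{d_i}, b_i)$, so $\epsilon_i = \Id_{d_i}$. Substituting into the defining formula $H_i^0 = (\epsilon_i\epsilon_i^*)^{-1}$ immediately yields $H_i^0 = \Id_{d_i}$. This is the first step, and it is essentially bookkeeping: since $\GL(d)$ absorbs the basis part of the framing, nothing else can survive in $H_i^0$.

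Second, for Ricci-flatness: because $H_i^0 = \Id$, we have $\log\det H_i^0 \equiv 0$, hence $\partial\bar{\partial}\log\det H_i^0 = 0$ identically, and so $H_T^0 = 0$. The moduli $\mathcal{M}^0 \cong R_{n-d, d}$ is a complex vector space equipped with its standard Euclidean K\"ahler structure, which is flat, hence Ricci-flat and K\"ahler--Einstein (with Einstein constant zero). The vanishing of $H_T^0$ is exactly the statement that the Ricci form of this flat structure is zero, so the formula is consistent with the Euclidean interpretation of $\mathcal{M}^0$. Third, for the inclusion $R_{n,d}^- \subset R_{n,d}^0$: given $(w,e)\in R_{n,d}^-$, Proposition \ref{prop:invertible} shows that each $\epsilon_i$ is invertible; but Proposition \ref{prop:invertible-0} characterizes $R_{n,d}^0$ as precisely the set of stable points where every $\epsilon_i$ is invertible, so $(w,e)\in R_{n,d}^0$.

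The only point requiring any care is the wording ``Ricci-flat K\"ahler--Einstein metric'': strictly speaking $H_T^0$ is the zero $(1,1)$-form, not a metric, so the content to record is that the Ricci curvature of the trivial bundle metric vanishes and matches the canonical flat K\"ahler structure on $\mathcal{M}^0 \cong R_{n-d,d}$. There is no genuine obstacle in this corollary; the substance of Section \ref{section:Euclidean} was established in Propositions \ref{prop:R0}--\ref{prop:invertible-0}, and the corollary merely records the degenerate consequences for the Euclidean signature.
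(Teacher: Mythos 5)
Your proposal is correct and follows essentially the same route as the paper, which gives no separate proof of this corollary: the triviality of $H_i^0$ is exactly the content of the gauge-fixing paragraph preceding it ($\epsilon_i=\Id$ after the $\GL(d)$-action, so $H_i^0=(\epsilon_i\epsilon_i^*)^{-1}=\Id$), and the inclusion $R_{n,d}^-\subset R_{n,d}^0$ is the comparison of Propositions \ref{prop:invertible} and \ref{prop:invertible-0} that you carry out. Your side remark that $H_T^0$ is literally the zero form rather than a metric, and should be read as the statement that the Ricci form of the flat K\"ahler structure on $\mathcal{M}^0\cong R_{n-d,d}$ vanishes, is an accurate reading of the paper's (slightly loose) phrasing.
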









\begin{remark}

Consider an acyclic quiver $Q$ with dimension vector $(n, d)$ such that $n_i=d_i$ for all source and sink vertices $i$, and $n_i=d_i+1$ for all others. If $Q$ with dimension vector $(n, d)$ gives the underlying neuron structure for a neural network, then $\mathcal{M}^0$ is the training space for this network. In particular, the standard backward propagation algorithm for a feed-forward neural network is standard gradient descent in the relevant vector space, matching up exactly with the gradient descent on $\mathcal{M}^0$ induced by $H_i^0$.


\end{remark}

\subsection{Hyperbolic Activation Functions} \label{sec:hyp}
This point of view of uniformization provides a learning model over hyperbolic moduli, or more generally, interpolations of spherical, Euclidean and hyperbolic moduli.  (One can add learnable parameters in the Hermitian metrics $H_i$, interpolating the metrics of different types.)  This is hyperbolic learning in the base (that is the parameter space).  There is another direction that we can consider hyperbolic learning, namely the fiber bundle direction.

Recall that we have the universal vector bundles $\cV_i$.  In \cite{JL}, we cooked up activation function (as a fiber bundle map of $\cV_i$) by composing the following:
$$ \cV_i \stackrel{H_i}{\cong} \cV_i^* \stackrel{(e^{(i)})^*}{\to} \underline{\C^{n_i}} \stackrel{\sigma}{\to} \underline{\C^{n_i}} \stackrel{e^{(i)}}{\to} \cV_i $$
where $\sigma: \C^{n_i} \to \C^{n_i}$ is a continuous function.  We can do the same thing uniformly for $\cM, \cM^0$ and $\cM^-$.

Additionally, in \cite{JL}, we constructed a specific activation function as a symplectomorphism $(\C^n,\omega_{\bP^n}|_{\C^n}) \cong (B, \omega_{\textrm{std}})$, where $B \subset \C^n$ is the ball $\{\|\vec{z}\|^2 < 1\}$, $\omega_{\bP^n}$ is the Fubini-Study metric on $\bP^n$, and $\omega_{\textrm{std}}$ is the standard symplectic form of $\C^n$.  $\sigma$ has the expression
$$(z_1,\dots, z_n)\to\left(\frac{z_1}{\sqrt{1+\sum_{i=1}^n|z_i|^2}}, \dots, \frac{z_n}{\sqrt{1+\sum_{i=1}^n|z_i|^2}}\right).$$

In view of hyperbolic metrics, we provide an alternative interpretation of the same function here.

\begin{prop}
$\sigma$ gives a symplectomorphism
$(\C^n, \omega_{\textrm{std}})\to (\C\mathbb{H}^n, \omega_{\C\mathbb{H}^n})$
where $\C\mathbb{H}^n$ denotes the hyperbolic ball.
\end{prop}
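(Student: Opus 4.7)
I observe first that $\sigma$ is smooth but \emph{not} holomorphic, since the scalar factor $\lambda := (1+\|z\|^2)^{-1/2}$ depends on both $z$ and $\bar z$. Consequently the identity $\sigma^*\partial\bar\partial f = \partial\bar\partial(\sigma^* f)$ fails, and the tempting but wrong shortcut
$-\frac{i}{2}\partial\bar\partial \log\sigma^*(1-\|w\|^2) = \frac{i}{2}\partial\bar\partial\log(1+\|z\|^2) = \omega_{FS}$
would produce the Fubini-Study form instead of $\omega_{\mathrm{std}}$. Tangentially, this failed shortcut makes the proposition complementary to the statement of \cite{JL} quoted just above: under $\sigma$, pulling back sets up the chain $\omega_{FS} \leftarrow \omega_{\mathrm{std}} \leftarrow \omega_{\C\mathbb{H}^n}$.

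The plan is therefore a direct wedge-product calculation. First I would write the $(1,1)$-form on $B$ explicitly, obtained by applying $-\frac{i}{2}\partial\bar\partial$ to $\log(1-\|w\|^2)$:
\[
\omega_{\C\mathbb{H}^n} = \frac{i}{2}\left(\frac{\sum_j dw_j\wedge d\bar w_j}{1-\|w\|^2} + \frac{(\sum_k \bar w_k\, dw_k)\wedge(\sum_l w_l\, d\bar w_l)}{(1-\|w\|^2)^2}\right).
\]
Setting $\alpha := \sum_k \bar z_k\, dz_k$ and $\bar\alpha := \sum_k z_k\, d\bar z_k$, one has $d\lambda = -\tfrac{\lambda^3}{2}(\alpha+\bar\alpha)$ and $d\sigma_j = \lambda\, dz_j + z_j\, d\lambda$. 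A short bookkeeping exploiting $\alpha\wedge\alpha = 0 = \bar\alpha\wedge\bar\alpha$ then produces the two pullback identities
\[
\sum_j d\sigma_j\wedge d\bar\sigma_j = \lambda^2 \sum_j dz_j\wedge d\bar z_j - \lambda^4\,\alpha\wedge\bar\alpha, \qquad \big(\sum_k \bar\sigma_k\, d\sigma_k\big)\wedge\big(\sum_l \sigma_l\, d\bar\sigma_l\big) = \lambda^6\,\alpha\wedge\bar\alpha,
\]
together with the scalar identity $\sigma^*(1-\|w\|^2) = 1/(1+\|z\|^2) = \lambda^2$.

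Substituting these into the coordinate expression for $\omega_{\C\mathbb{H}^n}$ collapses the computation to
\[
\sigma^*\omega_{\C\mathbb{H}^n} = \frac{i}{2}\Big(\sum_j dz_j\wedge d\bar z_j - \lambda^2\,\alpha\wedge\bar\alpha + \lambda^2\,\alpha\wedge\bar\alpha\Big) = \frac{i}{2}\sum_j dz_j\wedge d\bar z_j = \omega_{\mathrm{std}},
\]
the two $\alpha\wedge\bar\alpha$-contributions cancelling against each other. The main subtlety is precisely the non-holomorphicity flagged at the outset: once one renounces the $\partial\bar\partial$-potential shortcut and carries out the honest form-level calculation, the cancellation is forced by the exact calibration of the powers of $\lambda$ in the two terms of $\omega_{\C\mathbb{H}^n}$.
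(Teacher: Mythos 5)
Your proof is correct and follows essentially the same route as the paper's: both expand $\omega_{\C\mathbb{H}^n}=-\tfrac{i}{2}\partial\overline{\partial}\log(1-\|w\|^2)$ in coordinates and establish the symplectomorphism by a direct wedge-product pullback computation (the paper pulls $\omega_{\mathrm{std}}$ back along the inverse map $w\mapsto w/\sqrt{1-\|w\|^2}$, while you pull $\omega_{\C\mathbb{H}^n}$ back along $\sigma$ itself, which is the same calculation read in the opposite direction). Your bookkeeping via $\lambda$ and $\alpha$ checks out, and the opening observation that the non-holomorphicity of $\sigma$ rules out the $\partial\overline{\partial}$-potential shortcut is an accurate and worthwhile caveat.
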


\begin{proof}
By definition, $\omega_{\C\mathbb{H}^n}$ equals to $-\partial\overline{\partial}\log(1-|w|^2)$ up to a simple scaling. Here, we will be thinking of $w$ as the row vector $(w_1,\dots, w_n)$. Then



\[-\partial\overline{\partial}\log(1-|w|^2)=\partial\frac{wdw^*}{1-|w|^2}\]\[=\frac{(1-|w|^2)dw\wedge dw^*+(dw\cdot w^*)wdw^*}{(1-|w|^2)^2}=\frac{(1-|w|^2)dw\wedge dw^*+\overline{w}dw^t\overline{dw}w^t}{(1-|w|^2)^2}\]

Now, let's similarly write $z$ as the row vector $(z_1,\dots, z_n)$. We compute the pullback as

\[\sigma^*(dz\wedge d z^*)=d\frac{z}{\sqrt{1-|z|^2}}\wedge d\frac{z^*}{\sqrt{1-|z|^2}}\]
\[=\frac{(1- zz^*)dz+ \frac{1}{2}(\overline{zdz^*}+zdz^*)z}{(1- zz^*)^{3/2}}\wedge\frac{(1- zz^*)dz^*+ \frac{1}{2}(\overline{zdz^*}+zdz^*)z^*}{(1- zz^*)^{3/2}}\]
\[=\frac{1}{(1- zz^*)^3}\left((1- zz^*)^2dz\wedge dz^*+\frac{1}{4}(\overline{zdz^*}+zdz^*)^2zz^*\right)\]\[+\frac{1}{(1-zz^*)^3}\left(\frac{1}{2}(1-zz^*)\left((\overline{zdz^*}+zdz^*)z\wedge dz^*+dz\wedge(\overline{zdz^*}+zdz^*)z^*\right)\right)
\]

At this point, $\overline{zdz^*}\wedge zdz^*+ dz z^*\wedge zdz^*$ can be rewritten as $2 dz^t \overline{dz}z^t$. This gives us

\[=\frac{(1-zz^*)dz\wedge dz^*+\overline{z}dz^t\overline{d}zz^t}{(1-zz^*)^2}\]
which equals to above.
\end{proof}

In other words, $\sigma$ gives an identification between $\C^n$ and $\C\mathbb{H}^n$.  Then signal propagation between hyperbolic spaces can be modeled simply as linear maps between $\C^n$, and the composition $\iota \circ \sigma: \C^n \to \C^n$, where $\iota: \C\mathbb{H}^n \to \C^n$ is the inclusion of a ball in the space, gives an activation function.

\bibliographystyle{amsalpha}
\bibliography{geometry}	
\end{document}